\theoremstyle{definition}
\newtheorem{remark}{Remark}
\theoremstyle{plain}
\newtheorem{theorem}{Theorem}
\newtheorem{lemma}{Lemma}
\newtheorem{proposition}{Proposition}
\begin{document}

\title[Strong and weak (1, 3) homotopies]{Strong and weak (1, 3) homotopies on knot Projections}
\author{Noboru Ito \qquad Yusuke Takimura \qquad Kouki Taniyama} 
\thanks{2010 Mathematics Subject Classification.  Primary 57M25; Secondary 57Q35.}
\keywords{Knot projection; spherical curve; strong (1, 3) homotopy; weak (1, 3) homotopy}
\maketitle

\begin{abstract}
Strong and weak (1, 3) homotopies are equivalence relations on knot projections, defined by the first flat Reidemeister move and each of two different types of the third flat Reidemeister moves.  In this paper, we introduce the cross chord number that is the minimal number of double points of chords of a chord diagram.  Cross chord numbers induce a strong (1, 3) invariant.  We show that Hanaki's trivializing number is a weak (1, 3) invariant.  We give a complete classification of knot projections having trivializing number two up to the first flat Reidemeister moves using cross chord numbers and the positive resolutions of double points.  Two knot projections with trivializing number two are both weak (1, 3) homotopy equivalent and strong (1, 3) homotopy equivalent if and only if they can be related by only the first flat Reidemeister moves.  Finally, we determine the strong (1, 3) homotopy equivalence class containing the trivial knot projection and other classes of knot projections.  
\end{abstract}

\section{Introduction.}
Throughout this paper, we work in the smooth category.  A {\it{knot}} is a circle smoothly embedded into $\mathbb{R}^{3}$.  We consider a regular projection, called a {\it{knot projection}} or {\it{generic immersed spherical curve}}, of the knot to a sphere $S^{2}$, where the term regular projection is a projection to $S^{2}$ in which the image has only transverse double points of self-intersection.  When every double point of a knot projection is specified by over-crossing and under-crossing branches, we call the knot projection a {\it{knot diagram}}.  In particular, a knot projection (resp.~knot diagram) which has no double point is called the {\it{trivial knot projection}} (resp. {\it{trivial knot diagram}}).  

The first, second, and third Reidemeister moves on knot diagrams, depicted in Fig.~\ref{ReidemeisterMove}, are local moves of knot diagrams leading to an ambient isotopy of knots.  Reidemeister moves are frequently used to study knots.  
\begin{figure}[htbp]
\includegraphics[width=10cm]{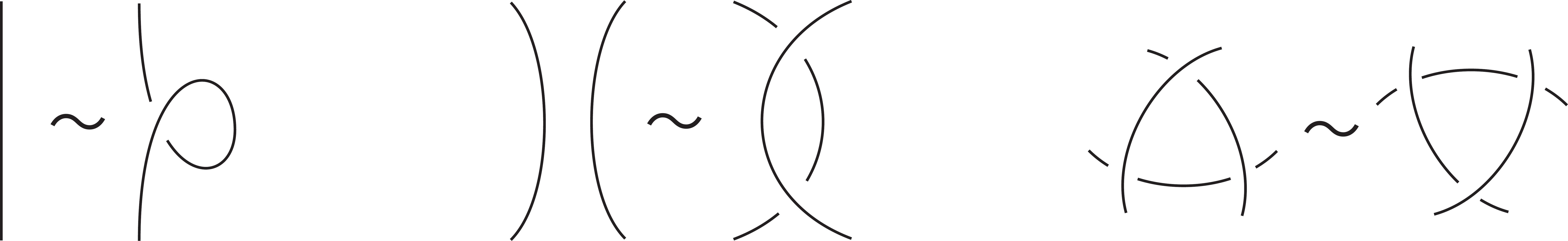}
\caption{The first, second, and third Reidemeister moves.}\label{ReidemeisterMove}
\end{figure}
Two knots are ambient isotopic if and only if two knot diagrams can be related by a finite sequence of Reidemeister moves.  If a knot diagram of an equivalence class can be related to the trivial knot diagram by a finite sequence of Reidemeister moves, the equivalence class is called the {\it{trivial knot type}}.

Naturally, we often consider a flat version of the first, second, and third Reidemeister moves on knot projections on $S^{2}$ not specifying information of over/under-crossing branches, as defined by Fig.~\ref{FlatReidemeisterMove}, and we call three local moves {\it{the first, second, and third homotopy moves}} on knot projections on a sphere $S^{2}$.  
\begin{figure}[htbp]
\includegraphics[width=10cm]{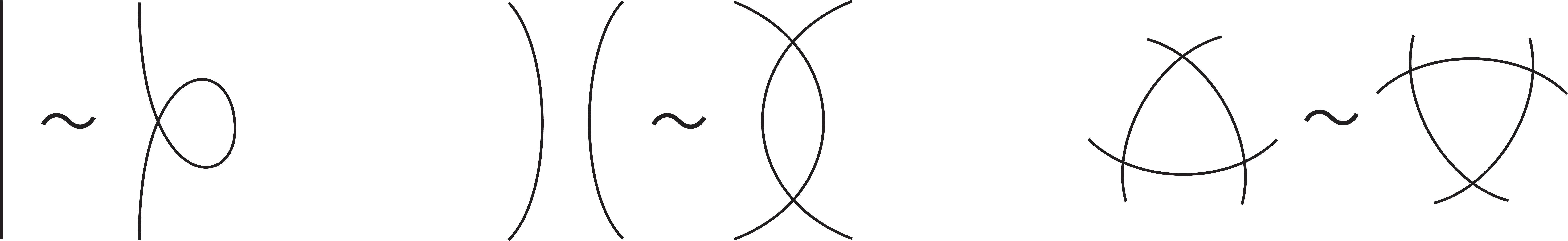}
\caption{The first, second, and third homotopy moves on knot projections.}\label{FlatReidemeisterMove}
\end{figure}

Now, we consider every possibility, i.e., seven nonempty types ($\{1, 2, 3\}$, $\{1, 2\}$, $\{2, 3\}$, $\{1, 3\}$, $\{1\}$, $\{2\}$, $\{3\}$), of choices of flat Reidemeister moves on knot projections.  Under the first, second, and third homotopy moves corresponding to $\{1, 2, 3\}$, the trivial knot projection on a sphere generates every knot projections on the sphere.  The pair of the second and third Reidemeister moves corresponding to $\{2, 3\}$ has been studied as a regular homotopy on plane curves by Whitney \cite{whitney}, whose rotation numbers imply that the trivial knot projection and the knot projection that appears similar to $\infty$ generate every knot projection on the sphere.  Based on regular homotopy theory, Arnold introduced his basic invariants $J^{+}$, $J^{-}$, and $St$ \cite{arnold1, arnold2}.  The sets of homotopy moves concerned with $\{1\}$, $\{2\}$, and $\{1, 2\}$ were already considered in \cite{ito&takimura1}.  However, basic problems of knot projections regarding $\{3\}$ and $\{1, 3\}$ still remain.  

In this paper, the equivalence relation generated by the first and third homotopy moves is called the (1, 3) homotopy.  To begin with, surprisingly, the equivalence class of knot projections containing the trivial knot projection under (1, 3) homotopy on $S^{2}$ has not been determined.  We could find only one related work by Hagge and Yazinski \cite{hagge&yazinski} contributing to the problem.  Hagge and Yazinski \cite{hagge&yazinski} found some knot projections that cannot be related to the trivial knot projection.  Thanks to the study of Hagge and Yazinski, at least we know that knot projections under (1, 3) homotopy are nontrivial.  However, other equivalence classes are still unknown.

In this paper, we consider the special case of flat third Reidemeister moves shown in Fig.~\ref{triplepoint_perestroika} for knot projections on $S^{2}$.  The {\it{strong}} (resp.~{\it{weak}}) third homotopy move is defined by ($s$) (resp.~($w$)) of Fig.~\ref{triplepoint_perestroika}.  
\begin{figure}[htbp]
\includegraphics[width=6cm]{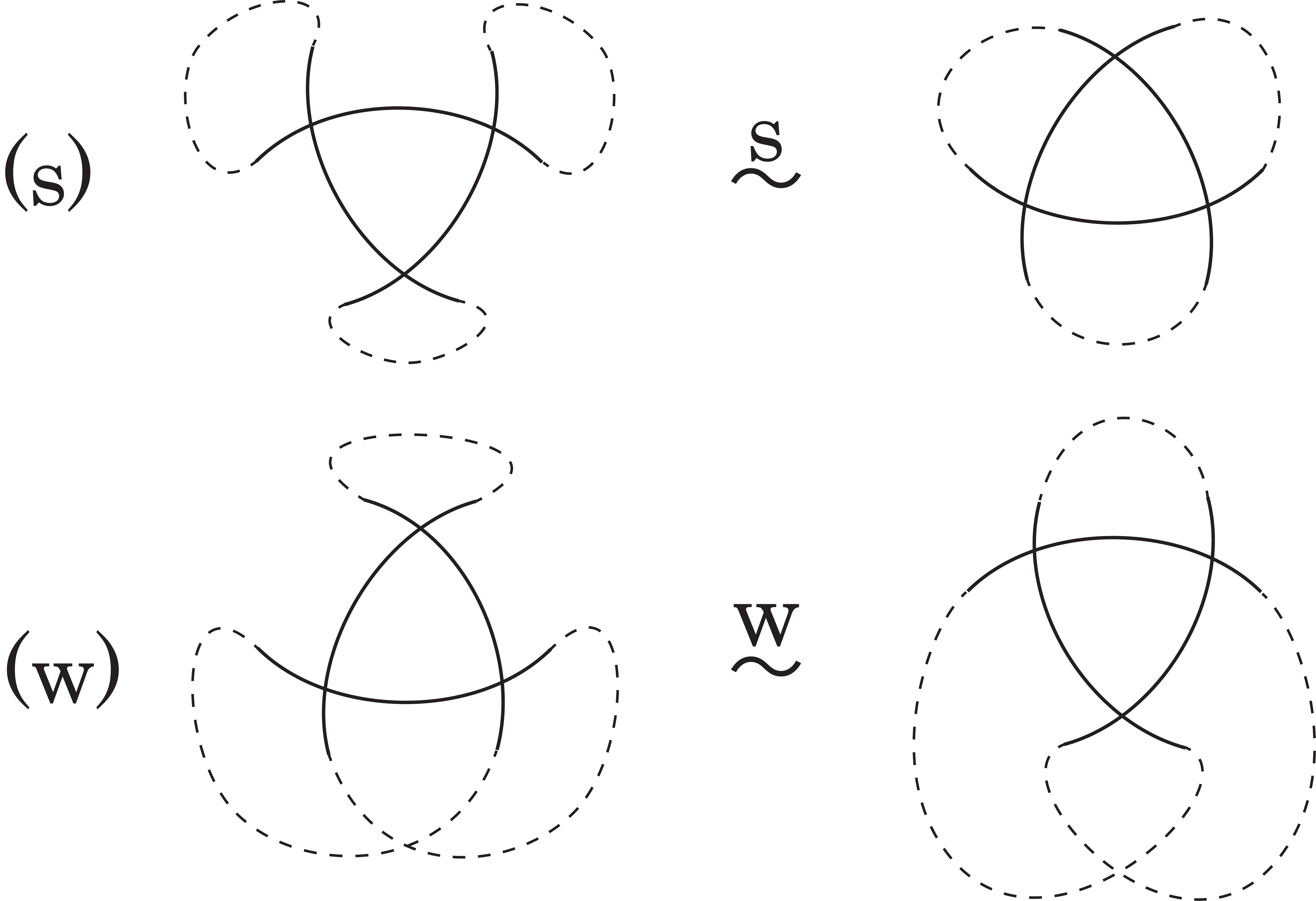}
\caption{Strong and weak triple point perestroikas (strong and weak third homotopy moves for knot projections).  The dotted arcs indicate the connection of the branches.}\label{triplepoint_perestroika}
\end{figure}
We can detect a third homotopy move as either a strong or a weak third homotopy moves by using any choice of orientations of knot projections, as in Fig.~\ref{oriented_strong_weak}.  
\begin{figure}[htbp]
\includegraphics[width=6cm]{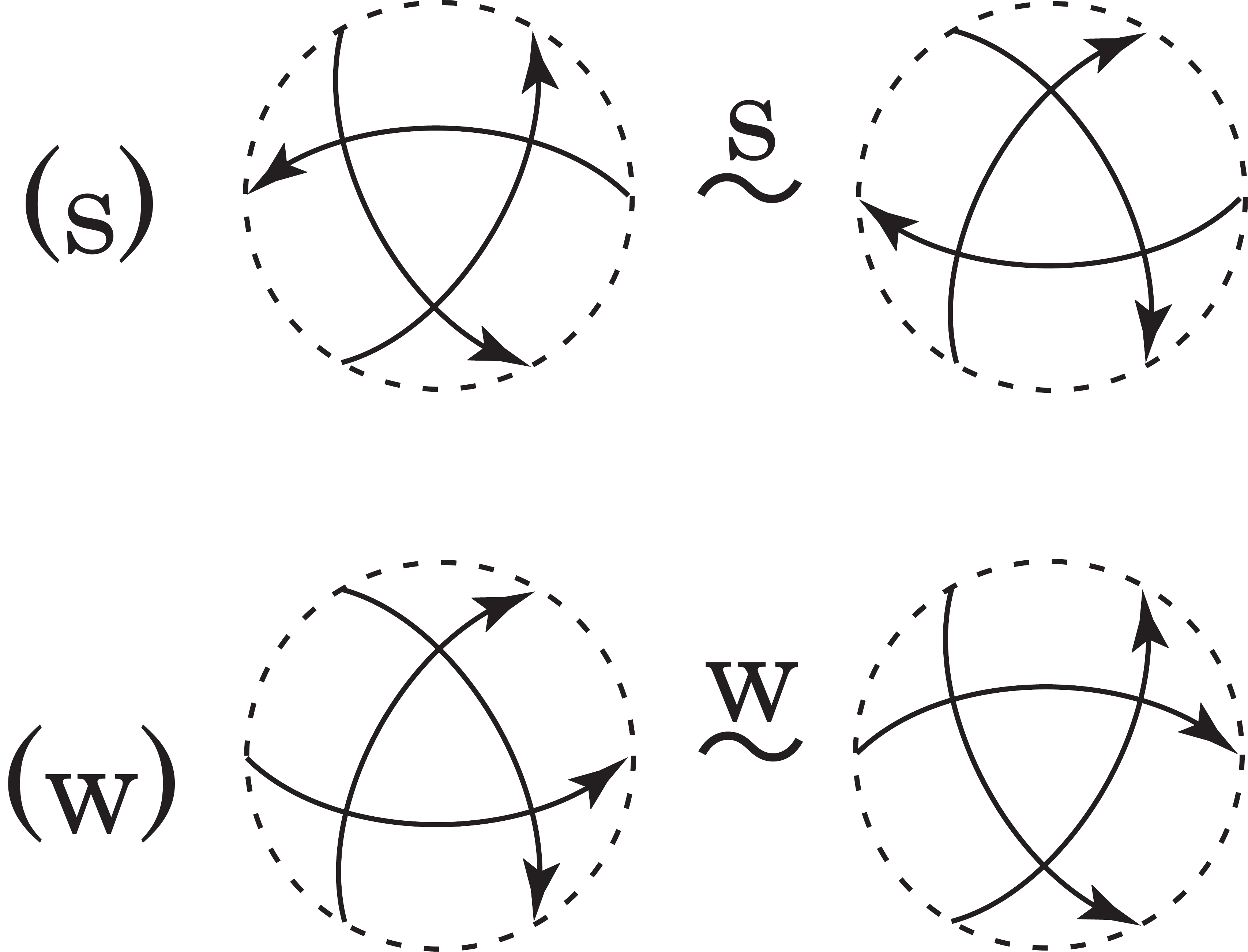}
\caption{Any orientation detects which homotopy move is strong or weak.}\label{oriented_strong_weak}
\end{figure}
Here, we would like to remark that these local moves, defined by the same parts of Fig.~\ref{triplepoint_perestroika}, were introduced by Viro \cite{viro_curve} as strong and weak triple point perestroikas on plane curves in his study of generalizations of Arnold's invariant $J^{-}$.  

For all knot projections, the equivalence relation under the first homotopy move and the weak (resp.~strong) third homotopy move is called {\it{weak}} (resp.~{\it{strong}}) (1, 3) {\it{homotopy}}.  In this paper, let us denote strong (resp.~weak) (1, 3) homotopy equivalence by $\stackrel{s}{\sim}$ (resp.~$\stackrel{w}{\sim}$).  Strong (resp.~weak) (1, 3) nonequivalence is denoted by $\stackrel{s}{\nsim}$ (resp.~$\stackrel{w}{\nsim}$).  

One more important notion in this paper is the trivializing number of a knot projection, introduced by Hanaki \cite{hanaki_osaka}.  Generally, Hanaki defined the trivializing number for spatial graph projections, but in this paper, we consider only the trivializing number of knot projections.  Some definitions, facts, and notations about the notion of trivializing numbers are given following Hanaki \cite{hanaki_osaka}.  A pseudo diagram $P$ is a generic immersed spherical curve with over/under information at some of the transverse double points.  Let $S(P)$ be the set of all the transverse double points of $P$.  If a double point of $P$ has information (resp.~no information) as to over/under-crossing branches, we call the double point a {\it{crossing}} (resp.~{\it{pre-crossing}}).  We consider the subset $c(P)$ (resp.~$pr(P)$) of $S(P)$ consisting of all the crossings (resp.~pre-crossings) of $P$, and then $S(P)$ $=$ $c(P) \sqcup pr(P)$.  The {{trivializing number of a knot projection}} $P$ is defined as the minimum number of elements $c(P)$ whose over/under information is such that even if we take any choice of over/under-crossing branches for all elements of $pr(P)$, we have a knot diagram belonging to the trivial knot type.  

A chord diagram $C\!D_P$ of a knot projection $P$ consists of a circle together with the pre-images of each double point of the knot projection connected by a chord.  Chord diagrams are often called Gauss diagrams (cf.,~\cite{GPV}).  A cross chord is a sub-chord diagram that is a pair of two chords intersecting each other, as in Fig.~\ref{cross_chord}.  
\begin{figure}[htbp]
\includegraphics[width=1.5cm]{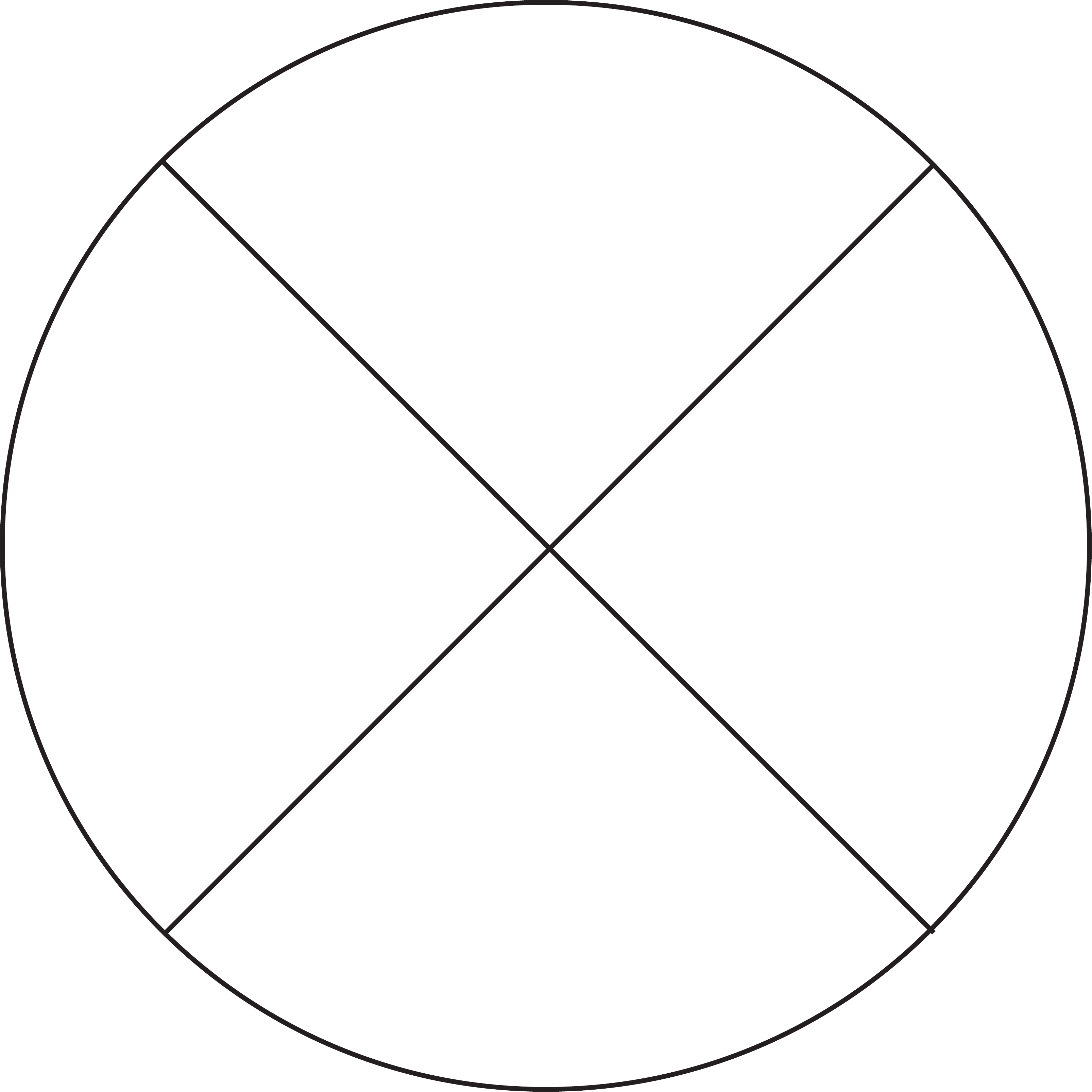}
\caption{A cross chord.}\label{cross_chord}
\end{figure}

Now, we state the main results of the paper.  
\begin{theorem}\label{weak13trivial_thm}
Let $\operatorname{tr}(P)$ be the trivializing number of an arbitrary knot projection $P$.  
\begin{enumerate}[$(1)$]
\item $\operatorname{tr}(P)$ is invariant under the first homotopy move, 
\item $\operatorname{tr}(P)$ is invariant under the weak third homotopy move, 
\item $\operatorname{tr}(P)$ is changed by $\pm 2$ or invariant by a strong third homotopy move.  
\end{enumerate}
In particular, the trivializing number is a weak $(1, 3)$ homotopy invariant for knot projections.  
\end{theorem}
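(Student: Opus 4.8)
The last sentence is a formal consequence of parts (1) and (2): by definition the weak $(1,3)$ homotopy relation is generated by the first homotopy move together with the weak third homotopy move, so any quantity left unchanged by each of these two moves is constant on every weak $(1,3)$ homotopy class; (1) and (2) supply precisely this, and (3) is recorded only for the later comparison with the strong relation. So the plan is to prove (1), (2), (3). For all three it is convenient to first note that $\operatorname{tr}(P)$ depends only on the chord diagram $C\!D_P$ (up to the dihedral symmetry of the circle): the knot projection on $S^{2}$, and hence every knot diagram obtained by assigning over/under information to a chosen subset of its double points, is recovered from $C\!D_P$ up to homeomorphism of $S^{2}$ and reflection, and triviality of a knot type is preserved by both. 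Thus it suffices to track how each homotopy move changes $C\!D_P$ and, with it, the collection of \emph{trivializing data}: pairs $(T,\sigma)$ with $T\subseteq S(P)$ and $\sigma$ an over/under assignment on $T$ such that every extension of $\sigma$ to all of $S(P)$ produces the trivial knot; $\operatorname{tr}(P)$ is the minimum of $|T|$ over such pairs.

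For (1): a first homotopy move adds or removes a double point $d$ whose chord in $C\!D_P$ is isolated, i.e.\ cuts off an arc of the circle containing no other chord endpoint; equivalently $d$ bounds a monogon. The key sub-observation is that the over/under information at such a $d$ is irrelevant to the knot type of any resolution, because a monogon is removed by a first Reidemeister move regardless of the crossing sign. Hence if $(T,\sigma)$ is a minimal trivializing datum for the projection $P$ with the kink, we may assume $d\notin T$ (discarding $d$ from $T$, if present, keeps the datum trivializing), and then $(T,\sigma)$ is also a trivializing datum for the projection $P'$ without the kink, since each resolution of $P$ becomes, after removing the monogon, a resolution of $P'$ of the same knot type; this gives $\operatorname{tr}(P)\le\operatorname{tr}(P')$, and the reverse inequality is symmetric. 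Therefore $\operatorname{tr}(P)=\operatorname{tr}(P')$.

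For (2) and (3): the three double points $d_{1},d_{2},d_{3}$ involved in a third homotopy move contribute three pairwise crossing chords to $C\!D_P$, and the move leaves every other chord untouched while reshuffling only the six endpoints of $d_{1},d_{2},d_{3}$ in the prescribed local pattern, the strong and weak patterns differing as in Fig.~\ref{triplepoint_perestroika}. The engine here should be a characterization of when a pseudodiagram has all resolutions trivial, phrased combinatorially: the sub-chord-diagram of $C\!D_P$ spanned by the pre-crossings must be free of \emph{obstructing} sub-chord-diagrams of a prescribed type (the smallest being the three-pairwise-crossing pattern underlying the standard trefoil projection), so that $\operatorname{tr}(P)$ becomes the minimum number of chords whose removal from $C\!D_P$ destroys every obstructing sub-diagram. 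One then checks that the weak reshuffling of the endpoints of $d_{1},d_{2},d_{3}$ never creates or destroys an obstructing sub-diagram, which gives a size-preserving bijection of minimal trivializing data and hence (2); for the strong reshuffling the same is true except in one exceptional configuration, where exactly one obstructing sub-diagram appears or disappears, and bookkeeping the resulting change to a minimum destroying set yields the $\pm 2$ (or $0$) of (3).

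The hard part will be the last point: pinning down the exact class of obstructing sub-chord-diagrams and proving the characterization of all-resolutions-trivial pseudodiagrams --- most cleanly by working resolution by resolution and using classical Reidemeister moves (a third move, or a pair of second moves) to compare the local diagrams before and after the move --- and then carrying out the finite case analysis of how the strong reshuffling acts on these obstructing sub-diagrams, including the bookkeeping that shows the net change to $\operatorname{tr}$ is always $0$ or $\pm 2$ and never $\pm 1$. The verification needed for (2) is the analogous but genuinely easier statement that the weak reshuffling is neutral on obstructing sub-diagrams.
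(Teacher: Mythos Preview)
Your treatment of (1) is fine, and indeed the final sentence follows from (1) and (2). The trouble is in your plan for (2) and (3).

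First, a concrete error: the three chords carried by the triangle are \emph{not} pairwise crossing on both sides of a third move. For the weak move, one side has exactly one crossing pair among the three local chords and the other side has exactly two; for the strong move, one side has all three pairs crossing and the other side has none. So your premise that ``the three double points \ldots\ contribute three pairwise crossing chords'' is false for the weak move, and your hoped-for mechanism (``the weak reshuffling never creates or destroys an obstructing sub-diagram'') cannot be right as stated, since the local crossing pattern genuinely changes.

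Second, the ``obstructing sub-diagram'' you are looking for is not the trefoil pattern: Hanaki has already proved the characterization you need, and it is simpler than you guess. His theorem says that $\operatorname{tr}(P)$ equals the minimum number of chords one must delete from $C\!D_P$ so that \emph{no two remaining chords cross}; it also says $\operatorname{tr}(P)$ is always even. With this in hand, there is no need to classify resolutions or to discover an obstruction class.

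The paper's argument then runs as follows. For (2), label the three local chords $a,b,c$ on one side and $d,e,f$ on the other, and let $m\in\{0,1,2,3\}$ be the number of local chords in a minimal deleting set. A short case analysis (the cases $m=0$ are impossible on each side, and for $m=1,2,3$ one exhibits an explicit matching deletion on the other side of the same size, using the one-to-one correspondence of the non-local chords) gives $\operatorname{tr}(P_1)\ge\operatorname{tr}(P_2)$ and $\operatorname{tr}(P_1)\le\operatorname{tr}(P_2)$. For (3), the same style of case analysis gives only $\operatorname{tr}(P_4)\le\operatorname{tr}(P_3)\le\operatorname{tr}(P_4)+2$; the conclusion ``$0$ or $\pm 2$'' then requires Hanaki's evenness, which you do not invoke. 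Without it you cannot exclude a change of $\pm 1$.

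In short: cite and use Hanaki's chord-deletion/evenness theorem, correct the description of the local chord patterns, and replace the speculative ``neutrality'' claim by the explicit $m$-by-$m$ matching of minimal deletion sets.
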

In this paper, we introduce $X(P)$ for an arbitrary knot projection $P$ to present Theorem \ref{weak&strong_thm}.  
\begin{theorem}\label{strong13invariant}
Let $X(P)$ be the number of cross chords in $C\!D_P$ of an arbitrary knot projection $P$.  
\begin{enumerate}[$(1)$]
\item $X(P)$ is invariant under the first homotopy moves.  
\item $X(P)$ is changed by $\pm 3$ by a strong third homotopy move.  
\item $X(P)$ is changed by $\pm 1$ by a weak third homotopy move.
\end{enumerate}
In particular, $X(P)$ $({\operatorname{mod}}~3)$ is a strong $(1, 3)$ homotopy invariant for knot projections.  
\end{theorem}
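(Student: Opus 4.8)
The plan is to follow the chord diagram $C\!D_P$ through the generating moves of strong $(1,3)$ homotopy and to count, at each move, the change in the number of pairs of chords that cross --- this number being exactly $X(P)$.

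\emph{Part $(1)$.} A first homotopy move creates or deletes a single double point that bounds a $1$-gon, that is, a loop of $P$ that meets no other part of $P$. In $C\!D_P$ the chord of this double point cuts off a sub-arc of the circle that contains no other chord endpoint; hence it crosses no chord and lies in no cross chord, and since the rest of $C\!D_P$ is untouched, $X$ is unchanged.

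\emph{Parts $(2)$ and $(3)$.} Fix a third homotopy move supported in a disk $D$, and let $\gamma_1,\gamma_2,\gamma_3$ be the chords of the three double points (the triangle vertices) lying in $D$; each $\gamma_i$ has one endpoint on each of two of the three arcs of $P\cap D$. The first step is a localization lemma: the move changes $X(P)$ only through the number of mutually crossing pairs among $\gamma_1,\gamma_2,\gamma_3$. Indeed, the move fixes $P\setminus D$ and the six points of $P\cap\partial D$, so the cyclic arrangement around the circle of the three ``outer'' arcs and the three ``inner'' arcs (the components of $P\cap D$) is fixed, and the move merely reverses, on each inner arc, the order of the two chord endpoints it carries. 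Any chord $\delta\notin\{\gamma_1,\gamma_2,\gamma_3\}$ has no endpoint on an inner arc, so each inner arc lies wholly on one side of $\delta$; thus whether $\delta$ crosses $\gamma_i$ depends only on which two inner arcs carry the endpoints of $\gamma_i$, which the move does not change. Crossings among chords none of which is $\gamma_1,\gamma_2,\gamma_3$ are clearly unaffected. So it suffices to compute the change in the number of crossing pairs among $\gamma_1,\gamma_2,\gamma_3$, a purely local question. Here the point is that the two endpoints lying on a single inner arc are \emph{consecutive} among the six endpoints, so the six endpoints occur around the circle as three consecutive pairs with incidences forced by the triangle; up to rotation, reflection and permutation of the three strands there are only a few configurations of (cyclic order of the pairs, order within each pair), and one direction of a third homotopy move carries each configuration to the one obtained by reversing all three within-pair orders. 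A short inspection of when endpoints alternate then shows: in the configuration realizing the strong move $(s)$ of Fig.~\ref{triplepoint_perestroika} the chords $\gamma_1,\gamma_2,\gamma_3$ pass between pairwise crossing and pairwise disjoint, so $X$ changes by $\pm 3$; in the configuration realizing the weak move $(w)$ the number of mutually crossing pairs among $\gamma_1,\gamma_2,\gamma_3$ is $1$ on one side of the move and $2$ on the other, so $X$ changes by $\pm 1$. (Comparison with Fig.~\ref{oriented_strong_weak} identifies the ``pairwise crossing'' case with the strong move and explains why any orientation detects the type.) This proves $(2)$ and $(3)$; and since a first homotopy move fixes $X$ while a strong third homotopy move changes it by $\pm 3\equiv 0\pmod 3$, the residue $X(P)\pmod 3$ is a strong $(1,3)$ homotopy invariant.

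\emph{Main obstacle.} The localization lemma is routine once one observes that each inner arc lies entirely on one side of every chord other than $\gamma_1,\gamma_2,\gamma_3$. The substantive part is the local case analysis: enumerating the configurations of the three triangle chords up to symmetry, pairing each with its partner under a third homotopy move, matching the two resulting move types with $(s)$ and $(w)$ of Fig.~\ref{triplepoint_perestroika} through the orientation test of Fig.~\ref{oriented_strong_weak}, and verifying in each case that the number of cross chords among $\gamma_1,\gamma_2,\gamma_3$ jumps by exactly $3$ or exactly $1$, never by $0$.
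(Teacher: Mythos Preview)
Your proof is correct and follows essentially the same approach as the paper's: separate the count of cross chords into those involving only the three ``triangle'' chords and those involving at least one other chord, argue that the latter is unchanged by the move, and then read off the local change ($3\leftrightarrow 0$ for the strong move, $1\leftrightarrow 2$ for the weak move). The paper's proof is terser, simply asserting that ``chords other than $(a,b,c)$ produce the same cross chords as chords other than $(d,e,f)$'' and then reading the local increment from Figs.~\ref{proof_chord_strong} and~\ref{proof_chord_weak}; your localization argument (each inner arc lies wholly on one side of any chord $\delta\notin\{\gamma_1,\gamma_2,\gamma_3\}$, so crossing of $\delta$ with $\gamma_i$ depends only on which two inner arcs carry the endpoints of $\gamma_i$) supplies the justification the paper leaves to the figures.
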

\noindent The number $X(P)$ is called the {\it{cross chord number}} of knot projections.  
\begin{theorem}\label{weak&strong_thm}
Consider the set of all knot projections with trivializing number two.  Two knot projections are equivalent under both weak $(1, 3)$ homotopy and strong $(1, 3)$ homotopy if and only if the two knot projections can be related by only the first homotopy moves.  
\end{theorem}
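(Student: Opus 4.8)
The ``if'' direction is immediate: a first homotopy move is a special case of the moves generating weak $(1,3)$ homotopy and also of those generating strong $(1,3)$ homotopy, so two knot projections related by first homotopy moves alone are a fortiori both weak $(1,3)$ homotopy equivalent and strong $(1,3)$ homotopy equivalent. The content is the converse, and the plan is to reduce it to the classification of knot projections with trivializing number two up to first homotopy moves obtained elsewhere in this paper: that classification describes each such projection, modulo first homotopy moves, by two pieces of data, namely its cross chord number $X(P)$ (Fig.~\ref{cross_chord}) and the data read off from the positive resolutions of its double points, and, in the form I shall use, shows that the pair $\bigl(X(P)\bmod 3,\ \text{positive-resolution data}\bigr)$ is already a complete invariant on the set of trivializing-number-two knot projections modulo first homotopy moves. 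Granting this, it suffices to check that two trivializing-number-two projections $P_1,P_2$ with $P_1\stackrel{s}{\sim}P_2$ and $P_1\stackrel{w}{\sim}P_2$ agree in both coordinates of that pair.

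The two needed facts come from the two hypotheses separately. By Theorem~\ref{strong13invariant}, $X(P)\bmod 3$ is a strong $(1,3)$ homotopy invariant, so $P_1\stackrel{s}{\sim}P_2$ yields $X(P_1)\equiv X(P_2)\pmod 3$. For the other coordinate I would show that the positive-resolution data is a weak $(1,3)$ homotopy invariant: it is plainly unchanged by a first homotopy move, and a direct inspection of the weak third homotopy move ($w$) in Fig.~\ref{triplepoint_perestroika} shows it is unchanged there as well. This is exactly where the asymmetry between ($s$) and ($w$) is exploited, since the strong move ($s$) does reorganize this data, consistently with Theorem~\ref{strong13invariant}(2) and Theorem~\ref{weak13trivial_thm}(3). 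Hence $P_1\stackrel{w}{\sim}P_2$ forces the positive-resolution data of $P_1$ and $P_2$ to coincide. Combining the two conclusions, $P_1$ and $P_2$ agree in both coordinates, so by the classification they are related by first homotopy moves alone.

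I expect the main obstacle to be the classification itself, together with its sharp form: that within a fixed value of the positive-resolution data the cross chord numbers of the trivializing-number-two representatives are pairwise incongruent modulo $3$ (so that knowing $X$ only modulo $3$ still pins down the representative). Checking that the positive-resolution data is a weak $(1,3)$ invariant is a short local verification of the type indicated above; the substantial work is in reducing an arbitrary trivializing-number-two projection to the representative list, using first homotopy moves together with the positive resolutions of a trivializing pair of double points, and in determining exactly which residues of $X$ occur in each level set of the positive-resolution data. Once that is in place, the remainder of the argument is the bookkeeping carried out above.
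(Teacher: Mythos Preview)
Your proposal is correct and follows essentially the same route as the paper. The paper's proof uses Hanaki's list of trivializing-number-two projections $\overline{T(n)}$, the positive-resolution map $p$ (shown to be a weak $(1,3)$ invariant via Figs.~\ref{1stPositive}--\ref{3rdPositive}) to separate the weak classes $[\overline{T(n)}]$ for odd $n$ (Proposition~\ref{classification_weak}), and then the single computation $X(\overline{T(n+1)})=X(\overline{T(n)})+1$ to see that $X\bmod 3$ distinguishes the two representatives $\overline{T(n)},\overline{T(n+1)}$ inside each weak class; this is precisely your pair $(X\bmod 3,\ p(P))$ acting as a complete invariant, and the ``sharp form'' you anticipate reduces to that one-line cross chord count.
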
 
\begin{theorem}\label{strongTrivial_thm}
The knot projection of the strong $(1, 3)$ homotopy equivalence class belonging to the trivial knot projection can be represented by the connected sum of knot projections, each of which is either the trivial knot projection 
$\begin{picture}(10,15)
\put(5,4){\circle{8}}
\end{picture}$, 
the knot projection that appears similar to $\infty$, or the trefoil knot projection defined by Fig.~\ref{trefoilProjection}.  
\end{theorem}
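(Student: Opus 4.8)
Write $P_{0}$ for the trivial knot projection. The plan is to prove the two inclusions separately: (a) every connected sum of copies of $P_{0}$, of the knot projection resembling $\infty$, and of the trefoil knot projection of Fig.~\ref{trefoilProjection} satisfies $\stackrel{s}{\sim} P_{0}$; and (b) conversely, every knot projection $P$ with $P\stackrel{s}{\sim}P_{0}$ is, after a finite sequence of first homotopy moves that delete reducible monogons, a connected sum of trefoil knot projections of Fig.~\ref{trefoilProjection}. Since deleting or inserting a monogon is exactly deleting or inserting an $\infty$-summand, (a) and (b) together yield the stated description of the equivalence class.

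For (a), I would first dispose of the three building blocks. The $\infty$-shaped projection is removed by one first homotopy move. For the trefoil projection I would give an explicit figure: starting from $P_{0}$, use first homotopy moves to create three kinks whose three double points occupy the corners of a single triangular face, apply one strong third homotopy move (type ($s$) of Fig.~\ref{triplepoint_perestroika}) to that face, and then collapse the remaining monogons by first homotopy moves to arrive at the trefoil projection of Fig.~\ref{trefoilProjection}; the orientation test of Fig.~\ref{oriented_strong_weak} confirms on the picture that the third homotopy move used is the strong one, and the passage $X=0\mapsto X=3$ is consistent with Theorem~\ref{strong13invariant}(2). Finally, strong $(1,3)$ homotopy is compatible with the connected sum, since every first or strong third homotopy move can be performed inside a single summand; hence a connected sum of projections each $\stackrel{s}{\sim}P_{0}$ is $\stackrel{s}{\sim}$ to a connected sum of copies of $P_{0}$, namely $P_{0}$.

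For (b), I would induct on the length of a sequence of first and strong third homotopy moves from $P_{0}$ to $P$, proving the stronger assertion that the \emph{reduced} chord diagram of $P$ — obtained by iteratively deleting isolated chords, equivalently after removing all reducible monogons by first homotopy moves — is a disjoint union of trefoil chord diagrams, the empty union being allowed; by the prime decomposition of chord diagrams this is equivalent to $P$ being, up to first homotopy moves, a connected sum of trefoil projections. The base case is immediate, and a first homotopy move does not change the reduced chord diagram, so the heart of the matter is the claim that flipping a triangular face of a knot projection whose reduced chord diagram is a disjoint union of trefoil chord diagrams yields again such a reduced chord diagram. I would prove this by a case analysis on the three corner double points of the flipped triangle — whether each is essential or a removable monogon crossing, and, when essential, which trefoil summand it lies in and how the triangle meets the connecting bands — checking in each case that the reduced chord diagram changes at most by inserting or deleting one trefoil component (matching the change of $X$ by $\pm3$ in Theorem~\ref{strong13invariant}(2) and of $\operatorname{tr}$ by $0$ or $\pm2$ in Theorem~\ref{weak13trivial_thm}(3)). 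To settle the low-complexity configurations that arise, I would invoke the strong $(1,3)$ invariants: a reduced prime projection on two double points has $X=1\not\equiv0\pmod{3}$ and so is not in the class; a reduced prime projection on three double points in the class must have $X=3$, hence has the trefoil chord diagram; and the finitely many realizable reduced prime chord diagrams on four (and, if needed, five) double points with $X\equiv0\pmod{3}$ are excluded by combining $X\pmod 3$ with the trivializing number, the trivializing-number-two classification of Theorem~\ref{weak&strong_thm}, and the additivity of $X$ and of $\operatorname{tr}$ under connected sum.

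The principal obstacle is precisely this case analysis in the inductive step: a single triangular flip can a priori merge or split several connected-sum summands at once and may simultaneously involve monogons sitting at its corners, so bookkeeping its effect on the reduced chord diagram is intricate. The decisive simplification is that $X\pmod 3$ must vanish at every stage; this, together with the additivity of $X$ and $\operatorname{tr}$ under connected sum and the trivializing-number-two classification, forces every prime summand that can ever occur to be a trefoil and keeps the number of essentially distinct local pictures finite.
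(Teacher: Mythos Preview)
Your direction (a) is unproblematic and essentially matches the trivial direction in the paper. The difficulty is entirely in (b), and there your proposal has a genuine gap.

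You plan to induct over \emph{all} first and strong third homotopy moves and to control the inductive step by a geometric case analysis backed up by $X\pmod 3$, $\operatorname{tr}$, Theorem~\ref{weak&strong_thm}, and additivity under connected sum. But these invariants are far too coarse to force the conclusion you want. After a single strong third homotopy move applied to a connected sum of trefoils with monogons attached, the result certainly has $X\equiv 0\pmod 3$ and the right parity of $\operatorname{tr}$, but nothing in your toolkit prevents it from having a \emph{large} prime summand that is not a trefoil; your appeal to ``the finitely many realizable reduced prime chord diagrams on four (and, if needed, five) double points'' presupposes, without justification, that only small prime pieces can appear. Concretely, the hard configurations are coherent $3$-gons whose three corners mix monogon crossings with crossings from one or more trefoil summands and with the connecting bands; you acknowledge this (``a single triangular flip can a priori merge or split several connected-sum summands at once''), but the numerical invariants you list cannot by themselves exclude the bad pictures, and you give no mechanism that does.

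The paper closes this gap with two ideas you are missing. First, it proves a commutation lemma (Proposition~\ref{lemStrong}): any strong $(1,3)$ homotopy from the trivial projection can be rewritten using \emph{only} the increasing moves $(1a)$ and $(3a)$, so one never has to analyze $(1b)$ or $(3b)$ splitting or merging summands. Second, and crucially, it introduces a new $\{0,1\}$-valued strong $(1,3)$ invariant $H$ (Lemma~\ref{H_invariant}), detecting a specific three-chord sub-configuration in $C\!D_{P}$; $H$ vanishes on the trivial class, and $H=0$ is exactly what rules out the problematic overlaps of triangles in the commutation argument (Lemma~\ref{Lem(3a)(3b)}) and what forces every $(3a)$ applied along the way to add nothing more than a trefoil summand. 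Neither $X\pmod 3$ nor $\operatorname{tr}$ sees this sub-chord pattern, so your argument as written cannot be completed without introducing something equivalent to $H$ or to the $(1a)$/$(3a)$ reduction.
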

\begin{figure}[htbp]
\includegraphics[width=1.5cm]{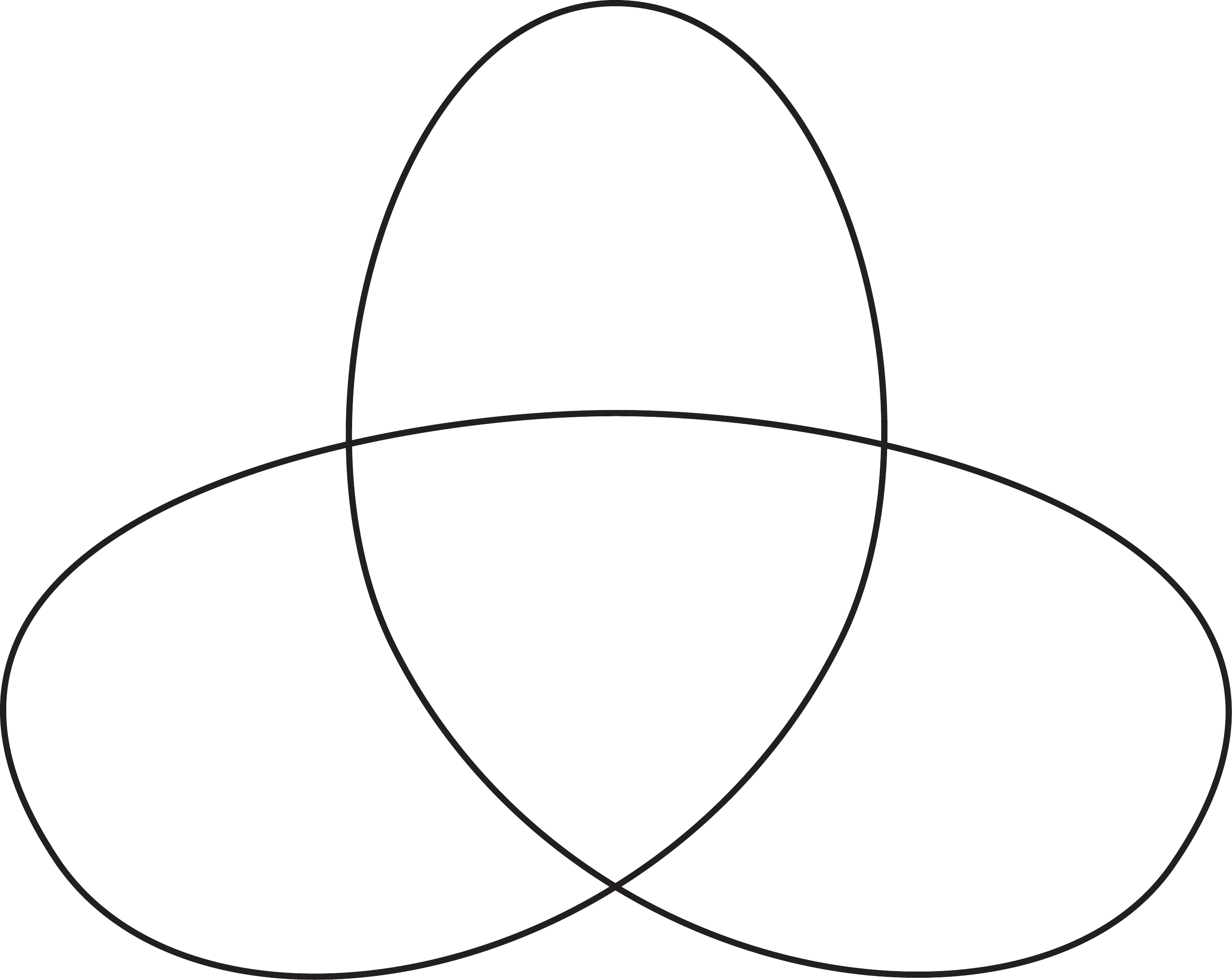}
\caption{Trefoil knot projection.}\label{trefoilProjection}
\end{figure}
\begin{theorem}\label{otherClass_thm}
Let $P_0$ be an arbitrary knot projection without $1$-gons, coherent $2$-gons, and coherent $3$-gons, as shown in Fig.~\ref{37}.  A knot projection $P$ is equivalent to a knot projection $P_0$ under strong $(1, 3)$ homotopy if and only if $P$ is realized as the connected sum of $P_0$ and the knot projections.  Each knot projection appears similar to $\infty$ or the trefoil knot projection shown in Fig.~\ref{trefoilProjection}.  
\end{theorem}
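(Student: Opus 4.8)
The plan is to read this statement as the version of Theorem~\ref{strongTrivial_thm} made relative to $P_0$; Theorem~\ref{strongTrivial_thm} is the case where $P_0$ is the trivial knot projection, which vacuously has no $1$-gons, coherent $2$-gons or coherent $3$-gons. I would therefore carry out the local move analysis behind Theorem~\ref{strongTrivial_thm} over an arbitrary such $P_0$, using in addition the invariant $X$ modulo~$3$ from Theorem~\ref{strong13invariant} and the additivity of $X$ under connected sum (so that $X=0$ for the knot projection resembling $\infty$ and $X=3$ for the trefoil projection).

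The "if" direction is the short one. It suffices to show that the connected sum of $P_0$ with the knot projection resembling $\infty$ is strong $(1,3)$ equivalent to $P_0$ --- this holds because such a connected sum is a copy of $P_0$ carrying one extra $1$-gon, which is deleted by a single first homotopy move --- and that the connected sum of $P_0$ with the trefoil projection is strong $(1,3)$ equivalent to $P_0$ --- this holds because the sequence of first and strong third homotopy moves relating the trefoil projection to the trivial knot projection given by Theorem~\ref{strongTrivial_thm} can be performed inside the trefoil summand, leaving $P_0$. Iterating these two reductions handles an arbitrary connected sum of $P_0$ with copies of the two projections.

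For the "only if" direction I would argue by induction on the length $n$ of a sequence $P_0=R_0\to R_1\to\cdots\to R_n=P$ of first and strong third homotopy moves, proving that every $R_i$ is \emph{$P_0$-standard}: a connected sum of $P_0$ with finitely many copies of the knot projection resembling $\infty$ and of the trefoil projection. The base case is immediate, and for the inductive step I would treat the move $R_i\to R_{i+1}$ by cases. A first homotopy move that creates a $1$-gon simply attaches one more summand resembling $\infty$. A first homotopy move that deletes a $1$-gon must delete a $1$-gon lying inside such a summand, since $P_0$ and the trefoil projection contain no $1$-gon, so the move removes that summand. A strong third homotopy move acts on a coherent $3$-gon of $R_i$; since $P_0$ has no coherent $3$-gon and a summand resembling $\infty$ supports no $3$-gon at all, this coherent $3$-gon either lies in a single trefoil-summand --- in which case a direct check shows the move replaces that summand by a $P_0$-standard piece once the $1$-gons it produces are absorbed by first homotopy moves --- or it involves strands from several summands together with the thin necks of the connected-sum decomposition, in which case one isotopes the necks so as to relocalize the move inside a single standard piece. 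Throughout, the additivity of $X$ and the invariance of $X$ modulo~$3$ keep track of the number of trefoil-summands.

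The step I expect to be the main obstacle is this last case: ruling out that a strong third homotopy move entangles $P_0$ with the attached summands in a way that escapes the connected-sum form. This is precisely where the hypotheses on $P_0$ are used: the absence of a coherent $3$-gon forbids the move from being performed "inside" $P_0$, while the absence of $1$-gons and of coherent $2$-gons controls the monogons and bigons that intermediate moves create and that later moves act on. Concretely, I expect the technical heart to be a structural lemma of the form: in a $P_0$-standard projection, every coherent $3$-gon lies, up to isotopy of the connecting necks, inside a single trefoil-summand. Its proof will require a careful analysis of how a coherent triangle can meet the connected-sum necks, after which it should reduce to the analogous step already established in the proof of Theorem~\ref{strongTrivial_thm}.
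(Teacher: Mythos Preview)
Your overall architecture---induction on the length of a strong $(1,3)$ sequence out of $P_0$, with a localisation step showing each move avoids the crossings of $P_0$---is the paper's as well. But the paper organises it more efficiently, and the place you expect the work to lie is not quite where it actually lies.

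Two organisational differences are worth noting. First, the paper does \emph{not} maintain your invariant ``$R_i$ is $P_0$-standard'' along the sequence. It proves instead the weaker Lemma~\ref{generalLemma}: each $R_i$ is a connected sum of $P_0$ with projections that are merely strong $(1,3)$ equivalent to the trivial projection. Only at the very end does one invoke Theorem~\ref{strongTrivial_thm} to say those attached pieces are connected sums of $\infty$ and the trefoil. This spares you from checking, at every inductive step, that a single move keeps the attached piece in the explicit form $\sharp\{\infty,T\}$. Second, the invariant $X\pmod 3$ plays no role here; additivity of $X$ is not needed for either direction.

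The more substantive point is your expectation that the structural lemma ``should reduce to the analogous step already established in the proof of Theorem~\ref{strongTrivial_thm}''. It does not. The proof of Theorem~\ref{strongTrivial_thm} reorders a sequence into $(1a)$'s and $(3a)$'s via Proposition~\ref{lemStrong}, and that reordering hinges on the invariant $H$ of Lemma~\ref{H_invariant}, which is available only because $H(\text{trivial})=0$. For a general $P_0$ one has no such control, so Lemma~\ref{generalLemma} is proved by a different mechanism: one fixes a regular neighbourhood $B$ of $P_0$ and small $r$-disks on each edge, and then argues case-by-case on the number $m\in\{0,1,2,3\}$ of $P_0$-crossings inside the disk supporting the move. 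The hypotheses on $P_0$ are consumed exactly here---no $1$-gons kills $m=1$ for $(1b)$, no coherent $2$-gons kills the mixed $m=1,2$ configurations for the third move, and no coherent $3$-gons kills $m=3$---forcing the move disk into a single $r$-disk. Your description (``the absence of $1$-gons and of coherent $2$-gons controls the monogons and bigons that intermediate moves create'') misreads this: the hypotheses constrain $P_0$, not the intermediate projections, and their job is solely to prevent the move from touching $P_0$'s own crossings.
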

\begin{figure}[htbp]
\includegraphics[width=10cm]{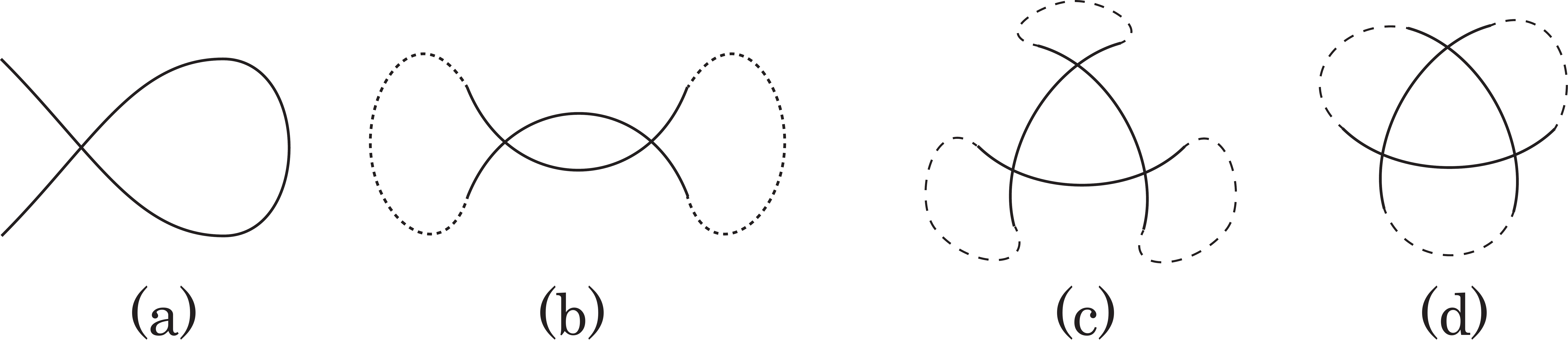}
\caption{(a) $1$-gon, (b) coherent $2$-gon, and (c)-(d) coherent $3$-gons.  }\label{37}
\end{figure}
Here, the term {\it{coherent}} $2$- and $3$-gons implies that these faces are coherently oriented if we give any orientation for a knot projection.  

In Sec.~\ref{weak&trivializing} and Sec.~\ref{CrossSec}, we present proofs of Theorem $\ref{weak13trivial_thm}$ and Theorem \ref{strong13invariant}, respectively.  Theorem \ref{weak&strong_thm} and Theorem \ref{strongTrivial_thm} are proved in Sec.~\ref{strong&weak} and Sec.~\ref{strongTrivial}, respectively.  Finally, we obtain a general result (Theorem \ref{otherClass_thm}) of Theorem \ref{strongTrivial_thm} in Sec.~\ref{strongGeneral}.  

\section{Trivializing number is a weak (1, 3) invariant.}\label{weak&trivializing}
A {\it{trivial chord diagram}} is a chord diagram not containing cross chords (Fig.~\ref{trivial_chord}).  
\begin{figure}[htbp]
\includegraphics[width=2cm]{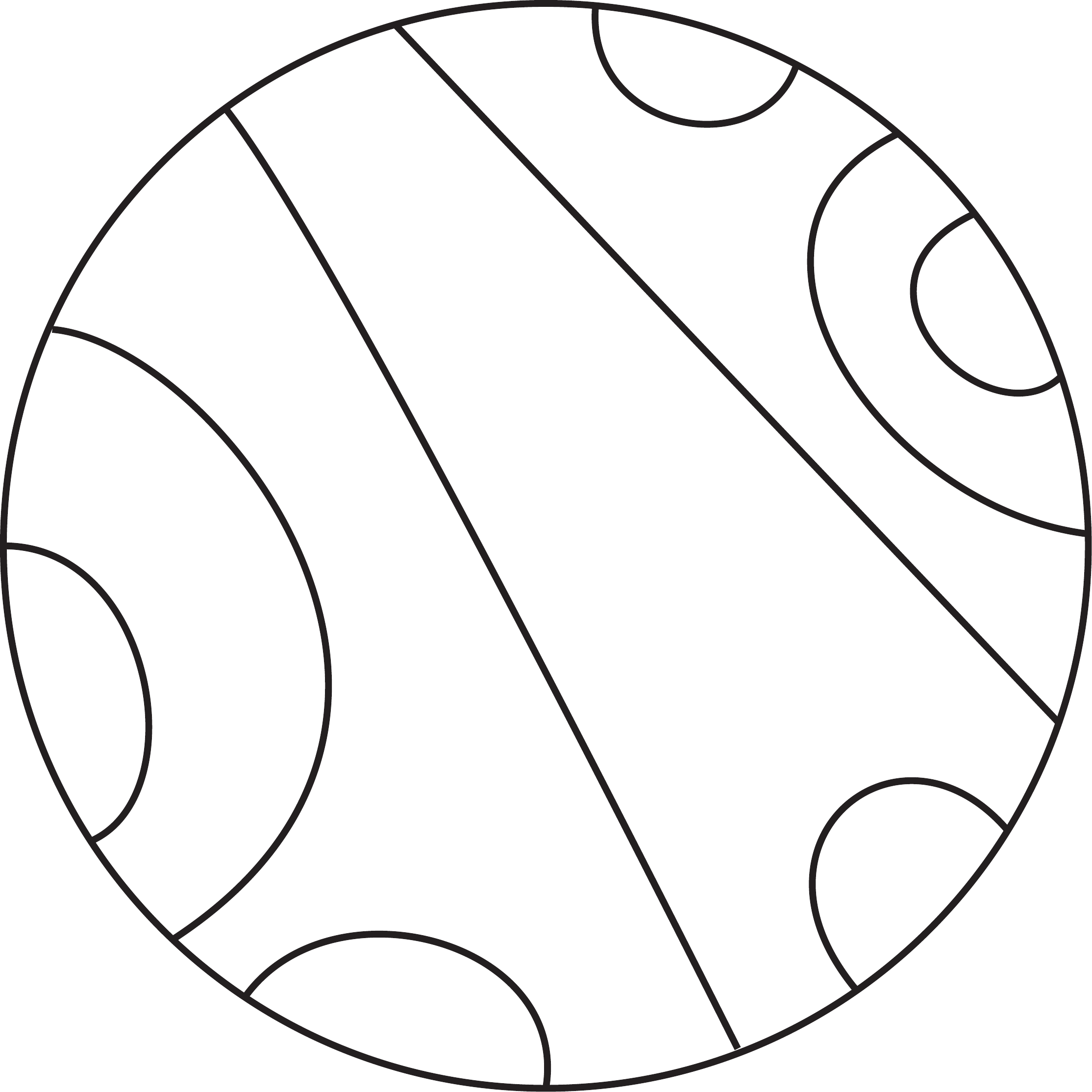}
\caption{Trivial chord diagrams.}\label{trivial_chord}
\end{figure}
Hanaki showed the following \cite[Page7, Theorem 13]{hanaki_JMJ}.  
\begin{theorem}[Hanaki]\label{hanaki_thm}
Let $\operatorname{tr}(P)$ be the trivializing number of a knot projection $P$.  We have
\begin{enumerate}[$(1)$]
\item $\operatorname{tr}(P)$ is the minimum number $n$ of chords of $C\!D_P$ such that deleting some n chords from $C\!D_P$ yields a trivial chord diagram, \label{hanaki_chord}
\item $\operatorname{tr}(P)$ is even. \label{hanaki_even} 
\end{enumerate}
\end{theorem}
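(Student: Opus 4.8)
The plan is to deduce both parts from one basic fact: \emph{a knot diagram whose chord diagram contains no cross chord represents the trivial knot}. To prove this, observe that a nonempty chord diagram without cross chords always has a chord one of whose two arcs of the bounding circle carries no other endpoint: among all chords take one whose smaller arc is minimal; a chord with an endpoint on that arc would, having no cross chord with our chord, have both endpoints there, contradicting minimality. The corresponding double point is a nugatory crossing, removable by a first Reidemeister move regardless of over/under information; deleting it leaves a smaller diagram still without cross chords, and induction on the number of double points finishes. I will also use the relative version of the same observation: if the chords outside a set $A$ of double points of $P$ are pairwise non-crossing, then such a chord cuts out a sub-loop of $P$ meeting the rest of the diagram only at crossings of $A$.

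For part (1) I will show that a set $A$ of double points of $P$ admits an over/under choice trivializing every completion \emph{if and only if} deleting the chords of $A$ from $C\!D_P$ yields a trivial chord diagram; taking minima gives the asserted equality. For ``if'', with $C\!D_P\setminus A$ cross-free, choose a chord $c\notin A$ whose sub-loop (as above) meets the diagram only at crossings of $A$, declare there the sub-loop branch to be the over branch, untwist $c$ by a first Reidemeister move, and iterate; the non-$A$ chords being pairwise non-crossing, these choices can be made consistently, and the result holds for every completion. For ``only if'' I will argue the contrapositive: if deleting $A$ leaves a cross chord $\{c,c'\}$, then for every choice on $A$ some resolution of the free crossings is a nontrivial knot. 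This implication is the step I expect to be the main obstacle: one must rule out that $\{c,c'\}$ is ``inert'' --- for instance, that it bounds an empty bigon of $P$ --- and for that I would exploit that $C\!D_P$ is the chord diagram of an honest spherical curve, so that an essential cross chord always carries enough ambient structure for a resolution to realize a nontrivial tangle.

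For part (2) I will use part (1): $\operatorname{tr}(P)$ is now the minimum number of chords of $C\!D_P$ meeting every cross chord, that is, the minimum vertex cover number $\tau(G)$ of the graph $G$ having one vertex per chord and one edge per cross chord. I will show $\tau(G)$ is even by induction on the number of chords, using two standard facts about chord diagrams of spherical curves: each chord crosses an \emph{even} number of other chords (a Jordan-curve argument applied to the sub-loop it cuts off), and deleting any chord again produces the chord diagram of a spherical curve (smooth the double point coherently). The first makes every vertex of $G$ have even degree, which already excludes $\tau(G)=1$, since a size-one vertex cover would force all remaining vertices to have degree at most one. When $\tau(G)\ge 2$, delete two chords lying in a minimum vertex cover: the result is still realizable, hence still has all even degrees, and one checks directly that $\tau$ drops by exactly $2$; the inductive hypothesis, with base case $\tau(G)=0$, then gives the result.
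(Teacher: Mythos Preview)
First, note that the paper does not prove this theorem at all: it is quoted verbatim from Hanaki's work \cite{hanaki_JMJ} and used as a black box. So there is no ``paper's own proof'' to compare your proposal against, and your sketch must stand on its own.

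There are two genuine gaps.

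\textbf{Part~(1), the ``only if'' direction.} You correctly isolate this as the crux, but the sketch you give is not an argument. What must be shown is: if the pre-crossings of a pseudo-diagram contain a linked pair $\{c,c'\}$, then for \emph{every} fixed over/under assignment on the remaining crossings there is a completion at the pre-crossings giving a nontrivial knot. Your worry about an ``empty bigon'' is misplaced (a bigon never produces a cross chord in $C\!D_P$), but the real difficulty is elsewhere: the crossings in $A$ are \emph{fixed}, so one cannot simply make the whole diagram descending away from $c,c'$. Hanaki's proof uses an actual knot invariant to detect the nontrivial completion; nothing in your outline substitutes for that step.

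\textbf{Part~(2), the inductive step.} Your reduction to the vertex--cover number $\tau(G)$ of the interlacement graph is correct, as is the observation that even degrees rule out $\tau(G)=1$, and it is true that removing two vertices of a minimum cover lowers $\tau$ by exactly~$2$. The gap is the claim that deleting a chord from $C\!D_P$ again yields the chord diagram of a spherical curve, justified by ``smooth the double point coherently.'' This is false as stated. Smoothing a double point so that the curve stays connected does \emph{not} produce $C\!D_P$ minus that chord on the same circle: the parametrizing circle is cut at the two chord endpoints and reglued, which changes which of the remaining chords cross one another. Put differently, Gauss's parity condition (every chord of a realizable diagram crosses an even number of others) is violated the moment you delete a chord that crossed anything: e.g.\ deleting one chord from the trefoil diagram $a\,b\,c\,a\,b\,c$ leaves $a\,b\,a\,b$, in which each chord has interlacement number~$1$, so it is not the chord diagram of any spherical curve. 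Hence your induction hypothesis does not apply to the reduced diagram, and the parity argument collapses. Even-degree alone cannot rescue the conclusion, since the $5$-cycle $C_5$ has all vertices of even degree yet $\tau(C_5)=3$.
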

\begin{remark}
As a corollary of Theorem \ref{hanaki_thm}, a knot projection $P$ is the trivial knot projection or a diagram obtained from the trivial knot projection by applying a sequence of the first homotopy moves if and only if $\operatorname{tr}(P)$ $=$ $0$.  
\end{remark}
Using Theorem \ref{hanaki_thm} (\ref{hanaki_chord}), we prove Theorem \ref{weak13trivial_thm}.  

\noindent {\bf{Proof of Theorem \ref{weak13trivial_thm}.}} 
\begin{proof}
The discussion proceeds by looking at Fig.~\ref{proof_chord_weak}.  
\begin{enumerate}
\item The first homotopy move corresponds to adding or deleting a chord not producing cross chords.  Then, Hanaki's Theorem \ref{hanaki_thm} immediately completes the proof.  
\item \label{weak_tr} Let $P_1$ (resp.~$P_2$) be the left (resp.~right) knot projection of Fig.~\ref{proof_chord_weak}.  
\begin{figure}[htbp]
\includegraphics[width=6cm]{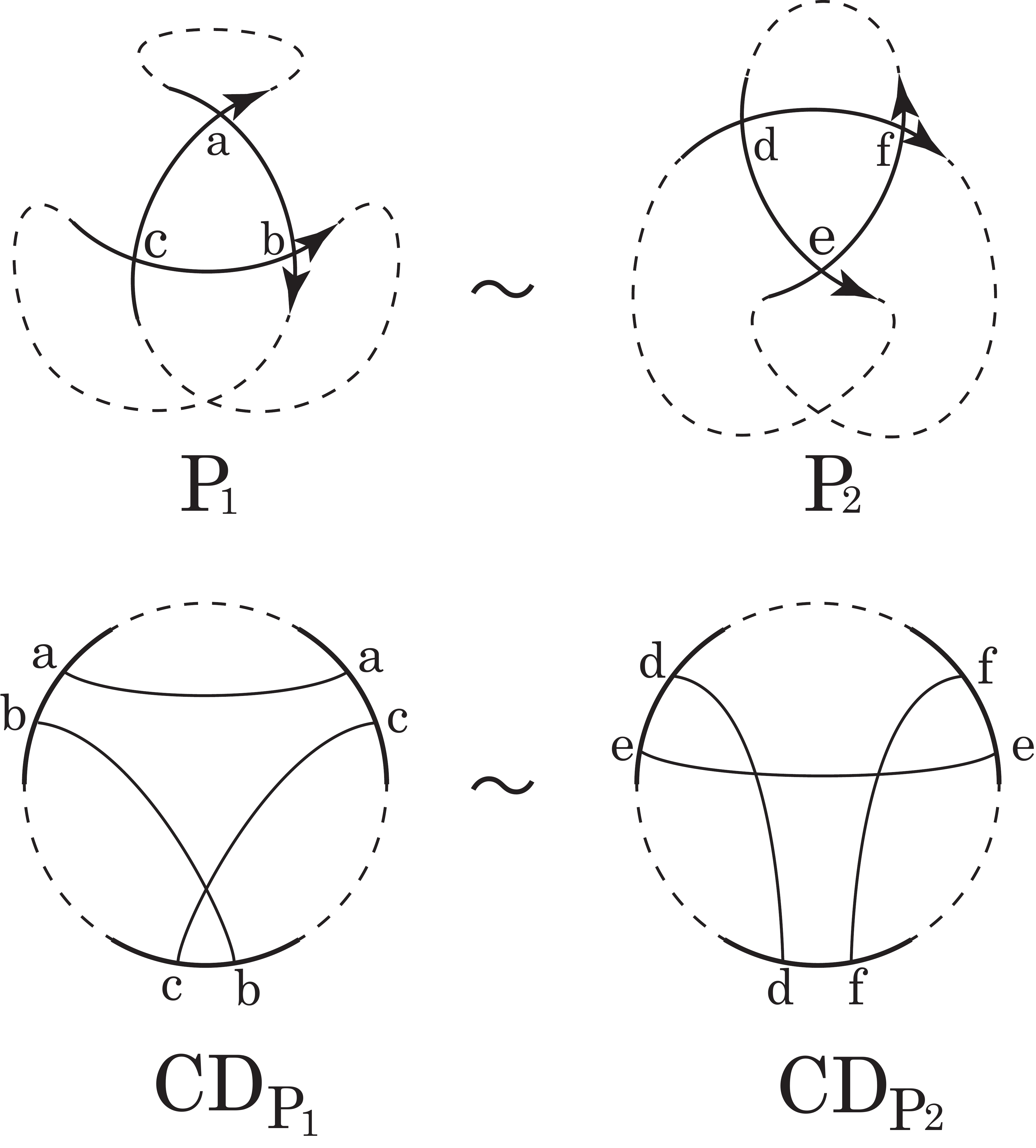}
\caption{One weak third homotopy move expressed by a knot projection (upper) and a chord diagram (lower).}\label{proof_chord_weak}
\end{figure}
Two knot projections $P_1$ and $P_2$ are related by a weak third homotopy move.  The difference between $C\!D_{P_1}$ and $C\!D_{P_2}$ corresponds to the difference between chords $a$, $b$, $c$ and $d$, $e$, $f$ (Fig.~\ref{proof_chord_weak}).  There is a one-to-one correspondence between a chord of $C\!D_{P_1}$ and a chord of $C\!D_{P_2}$ if each of the two chords connects two dotted arcs on each of $C\!D_{P_1}$ or $C\!D_{P_2}$.  Below, we show (I) $\operatorname{tr}(P_1)$ $\ge$ $\operatorname{tr}(P_2)$ and (I\!I) $\operatorname{tr}(P_1)$ $\le$ $\operatorname{tr}(P_2)$.  

\noindent (I) Assume that when we delete $\operatorname{tr}(P_1)$ chords, we delete $m$ chords among the three chords $a$, $b$, and $c$.  Below, we consider each case for $m$.  
\begin{itemize}
\item Case $m$ $=$ $0$.  In this case, the chords $b$ and $c$ are left, but the two chords become cross chords, and then, $C\!D_{P_1}$ is not a trivial chord diagram, which produces a contradiction.  Then, $m$ $\neq$ $0$, i.e., there is no need to consider the case.  
\item Case $m$ $=$ $1$.  It is necessary to delete $b$ or $c$.  First, we choose $b$.  If we delete $e$, cross chords consisting of $d$, $e$, and $f$ disappear.  Recall that there is a one-to-one correspondence between a chord $x_i$ connecting dotted arcs belonging to $C\!D_{P_1}$ and a chord $y_i$ of $C\!D_{P_2}$ at the location corresponding to $C\!D_{P_1}$ ($i$ $=$ $1, 2, \dots, {\operatorname{tr}}(P_1)-1$).  After we delete chords $b$ and $e$, we dissolve all cross chords connecting dotted arcs of $C\!D_{P_2}$ in the same way as those of $C\!D_{P_1}$.  This is because a chord $x_i$ with $a$ or $c$ creates cross chords if and only if the corresponding chord $y_i$ with $d$ or $f$ creates cross chords (Fig.~\ref{proof_chord_weak_explain}).  Below, we frequently use the same discussion involving a one-to-one correspondence, denoted by $x_i(P_1)$ $=$ $y_i(P_2)$.  Then, we express the deletion of chords $x_i(P_1)$ as ``we use $x_i(P_1)$ $=$ $y_i(P_2)$.''  Below, we use this expression.  Now, to obtain a trivial chord diagram of $P_2$, it is sufficient to use at most $\operatorname{tr}(P_1)$ chords in this case.  Then, using the minimality of the trivializing number in Hanaki's Theorem \ref{hanaki_thm}, $\operatorname{tr}(P_1)$ $\ge$ $\operatorname{tr}(P_2)$.  
\begin{figure}[htbp]
\includegraphics[width=6cm]{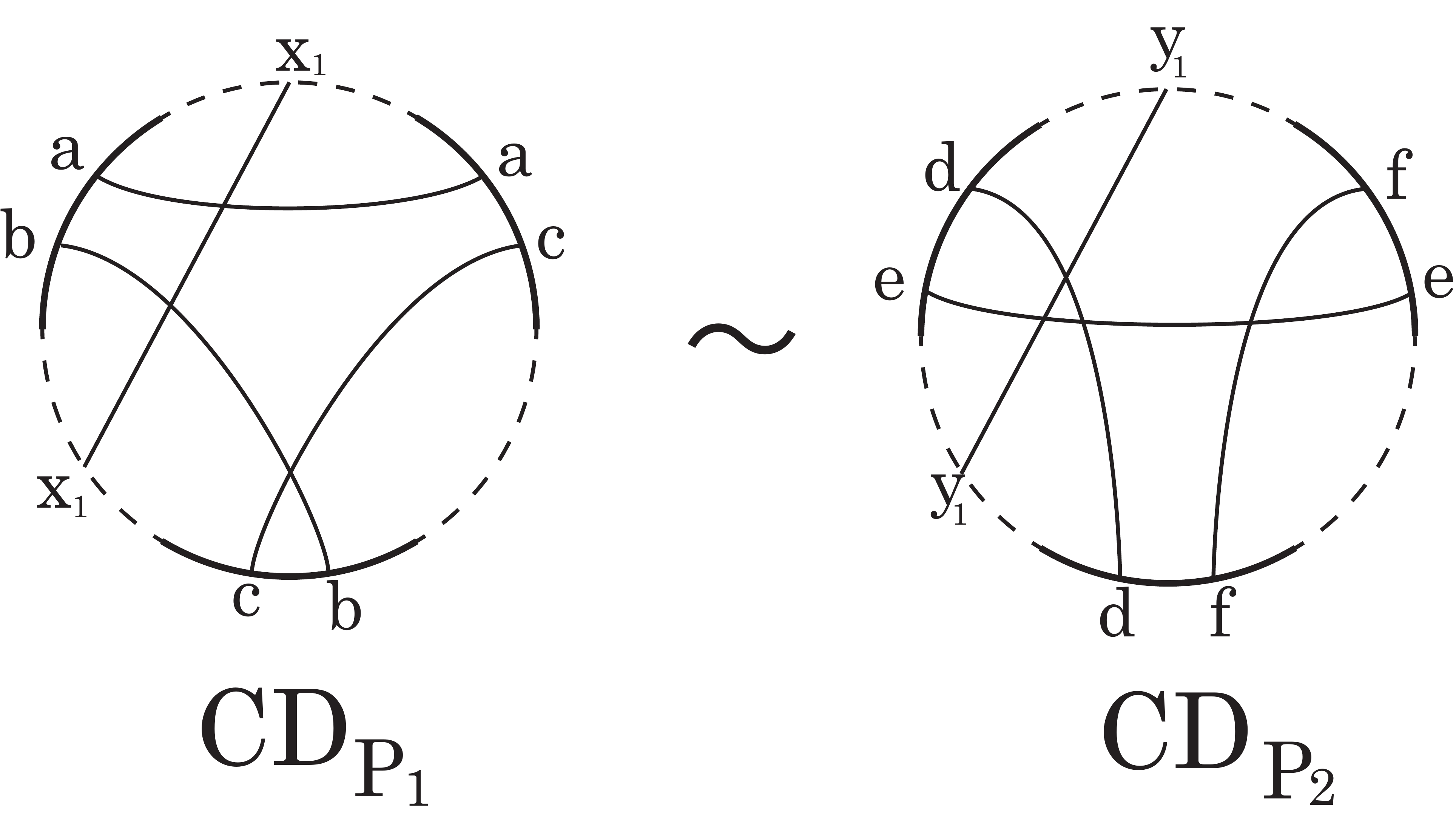}
\caption{One-to-one correspondence of chords connecting dotted arcs.}\label{proof_chord_weak_explain}
\end{figure}
\item Case $m$ $=$ $2$.  If the chords $b$ and $c$ are deleted, then we choose the deletion of $d$ and $f$.  If the chords $a$ and $b$ (resp.~$a$ and $c$) are deleted, then we choose the deletion of $d$ and $e$ (resp.~$e$ and $f$).  For other chords, we use $x_i(P_1)$ $=$ $y_i(P_2)$ ($i$ $=$ $1, 2, \dots, \operatorname{tr}(P_1)-2$).  Again, using Hanaki's Theorem \ref{hanaki_thm}, we have $\operatorname{tr}(P_1)$ $\ge$ $\operatorname{tr}(P_2)$ in this case.  
\item Case $m$ $=$ $3$.  In this case, the chords $a$, $b$, and $c$ were deleted, so we choose the deletion of $d$, $e$, and $f$.  For other chords, we use $x_i(P_1)$ $=$ $y_i(P_2)$ ($i$ $=$ $1, 2, \dots, \operatorname{tr}(P_1)-3$).  Using Hanaki's Theorem \ref{hanaki_thm}, we have $\operatorname{tr}(P_1)$ $\ge$ $\operatorname{tr}(P_2)$.  
\end{itemize}
In summary, we have $\operatorname{tr}(P_1)$ $\ge$ $\operatorname{tr}(P_2)$.  

\noindent (I\!I) The proof of this case is very similar to that of (I).  Assume that when we delete $\operatorname{tr}(P_2)$ chords, we delete $m$ chords among the three chords $d$, $e$, and $f$.  For each case of $m$, to obtain a trivial chord diagram starting from $C\!D_{P_1}$, we show that the deletion of at most $\operatorname{tr}(P_2)$ chords is sufficient, which implies that $\operatorname{tr}(P_1)$ $\le$ $\operatorname{tr}(P_2)$ for each $m$, using the minimality of the trivializing number in Hanaki's Theorem \ref{hanaki_thm}.  
\begin{itemize}
\item Case $m$ $=$ $0$.  The cross chords are left, so there is no need to consider the case.  
\item Case $m$ $=$ $1$.  In this case, the chord $e$ should be deleted and the deletion of either $b$ or $c$ is appropriate.  Similarly to Case $m=1$ of (I), for other chords, we use $x_i(P_2)$ $=$ $y_i(P_1)$.  
\item Case $m$ $=$ $2$.  If the chords $d$ and $f$ are deleted, then we choose the deletion of $b$ and $c$.  If the chords $d$ and $e$ (resp.~$e$ and $f$) are deleted, then we choose the deletion of $a$ and $b$ (resp.~$a$ and $c$).  For other chords, we use $y_i(P_2)$ $=$ $x_i(P_1)$.  
\item Case $m$ $=$ $3$.  In this case, the chords $d$, $e$, and $f$ were deleted, so the deletion of $a$, $b$, and $c$ is appropriate, and for other chords, we use $y_i(P_2)$ $=$ $x_i(P_1)$.  
\end{itemize}

\item The proof proceeds in the same manner as that of (\ref{weak_tr}).  Let $P_3$ (resp.~$P_4$) be the left (resp.~right) knot projection of Fig.~\ref{proof_chord_strong}.  
\begin{figure}[htbp]
\includegraphics[width=7cm]{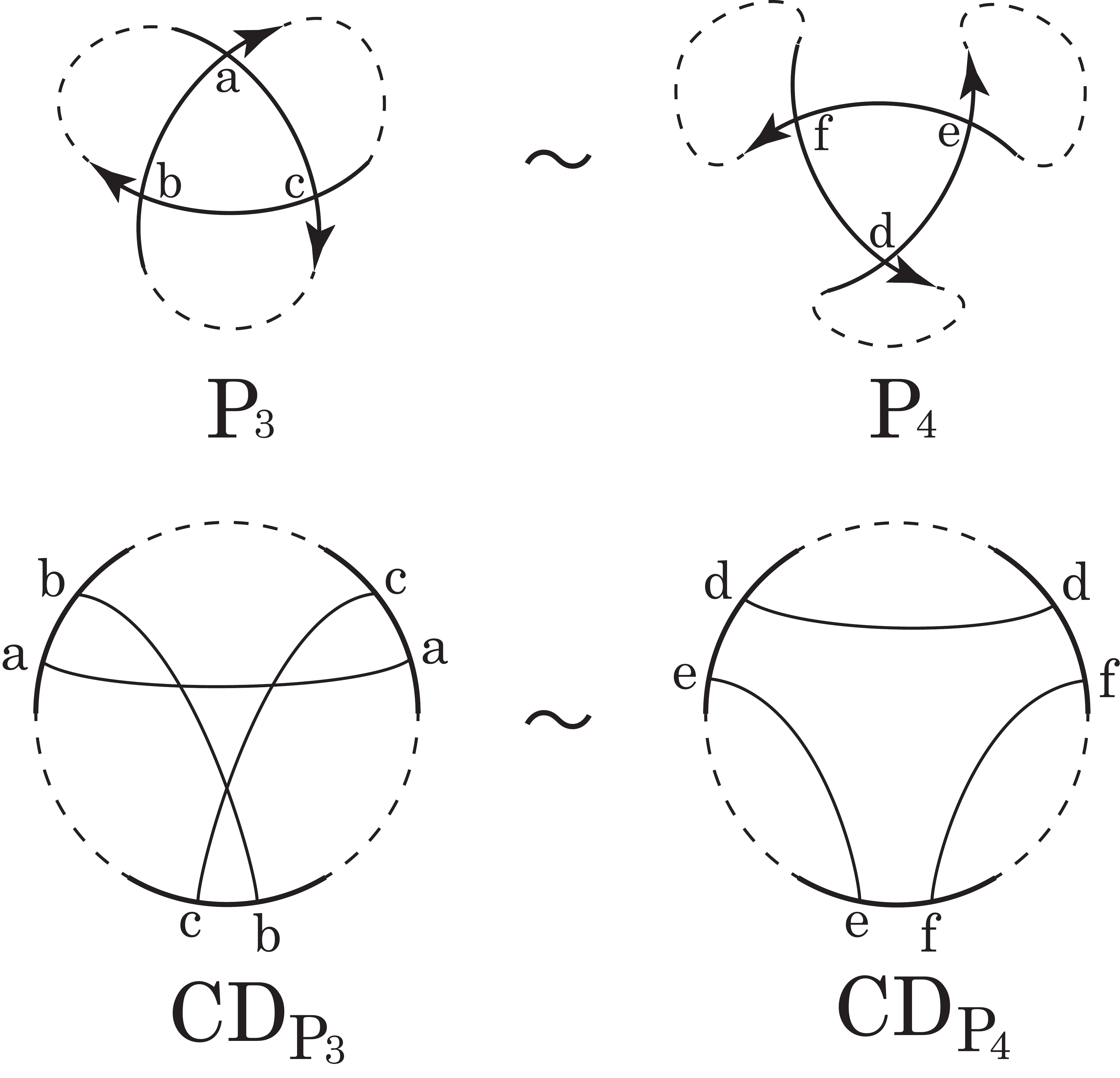}
\caption{One strong third homotopy move expressed by a knot projection (upper) and a chord diagram (lower).}\label{proof_chord_strong}
\end{figure}
Two knot projections $P_3$ and $P_4$ are related by a strong third homotopy move.  The difference between $C\!D_{P_3}$ and $C\!D_{P_4}$ corresponds to the the difference between the three chords $a$, $b$, $c$ and $d$, $e$, $f$ (Fig.~\ref{proof_chord_strong}).  Similarly to (\ref{weak_tr}), there is a one-to-one correspondence between a chord of $C\!D_{P_3}$ and a chord of $C\!D_{P_4}$, if each of the two chords connects two dot arcs on $C\!D_{P_3}$ or $C\!D_{P_4}$.  Below, we show (I) $\operatorname{tr}(P_3)$ $\ge$ $\operatorname{tr}(P_4)$ and (I\!I) $\operatorname{tr}(P_3)$ $\le$ $\operatorname{tr}(P_4)$ $+$ $2$, which imply $\operatorname{tr}(P_4)$ $\le$ $\operatorname{tr}(P_3)$ $\le$ $\operatorname{tr}(P_4)$ $+$ $2$.  Then, thanks to Hanaki's Theorem (\ref{hanaki_even}), $\operatorname{tr}(P)$ is even, so we have $\operatorname{tr}(P_3)$ $=$ $\operatorname{tr}(P_4)$ or $\operatorname{tr}(P_4)$ $+$ $2$.  

Now let us illustrate inequalities (I) and (I\!I).  The style of the proofs below is very similar to that of case (\ref{weak_tr}), so we use the same symbols and notations to minimize repetition of similar phrases.  

\noindent(I) Assume that when we delete $\operatorname{tr}(P_3)$ chords, we delete $m$ chords among the three chords $a$, $b$, and $c$, where $\operatorname{tr}(P_3)-m$ chords consists of chords $x_i(P_3)$ ($i$ $=$ $1, 2, \dots, \operatorname{tr}(P_3)-m$).  By this assumption, we have $m$ $\ge$ $2$.  Below, we consider each case for $m$.  There is a natural one-to-one correspondence between a chord $x_i$ of $C\!D_{P_3}$ and a chord $y_i$ of $C\!D_{P_4}$, except for $a$, $b$, $c$, $d$, $e$, and $f$.  Then, induced chords on $C\!D_{P_4}$ from chords $x_i(P_3)$ of $C\!D_{P_3}$ by the one-to-one correspondence are denoted by $y_i(P_3)$ ($i$ $=$ $1, 2, \dots, \operatorname{tr}(P_3)-m$).  
\begin{itemize}
\item Case $m$ $=$ $2$.  If $\operatorname{tr}(P_3)$ chords consists of $a$, $b$, and $x_i(P_3)$ ($i$ $=$ $1, 2, \dots, \operatorname{tr}(P_3)-2$), $\operatorname{tr}(P_4)$ is less than or equal to the number $\operatorname{tr}(P_3)$ of chords consisting of $e$, $d$, and $y_i(P_4)$ ($i$ $=$ $1, 2, \dots, \operatorname{tr}(P_3)-2$), i.e. $\operatorname{tr}(P_3)$ $\ge$ $\operatorname{tr}(P_4)$.  The role $(a, b, e, d)$ can be replaced with that of either $(a, c, e, f)$ or $(b, c, f, d)$.  
\item Case $m$ $=$ $3$.  For $m$ chords $(a, b, c)$ and $x_i(P_3)$, we delete $(d, e, f)$ and $y_i(P_4)$, which implies that $\operatorname{tr}(P_3)$ $\ge$ $\operatorname{tr}(P_4)$.  
\end{itemize}
In summary, $\operatorname{tr}(P_3)$ $\ge$ $\operatorname{tr}(P_4)$.  

\noindent(I\!I) Assume that when we delete $\operatorname{tr}(P_4)$ on $C\!D_{P_4}$, we delete $m$ chords in three chords $d$, $e$, and $f$ where $\operatorname{tr}(P_4)-m$ chords consists of chords $x_i(P_4)$ ($i$ $=$ $1, 2, \dots, \operatorname{tr}(P_4)-m$).  A one-to-one correspondence between chords of $C\!D_{P_3}$ and $C\!D_{P_4}$, except for $a$, $b$, $c$, $d$, $e$, and $f$, induces $y_i(P_3)$ from $x_i(P_4)$ ($i$ $=$ $1, 2, \dots, \operatorname{tr}(P_4)-m$).  Below, we consider every case for each $m$.  
\end{enumerate}
\begin{itemize}
\item Case $m$ $=$ $0$.  In this case, $\operatorname{tr}(P_3)$ is less than or equal to the number of $y_i(P_3)$ $+$ $2$ $=$ $x_i(P_4)$ $+$ $2$ $=$ $\operatorname{tr}(P_4)$ $+$ $2$, since cross chords consisting of $a$, $b$, and $c$ can be dissolved by deleting any two chords among the three chords $a$, $b$, and $c$.  Then, $\operatorname{tr}(P_3)$ $\le$ $\operatorname{tr}(P_4)$ $+$ $2$.  
\item Case $m$ $=$ $1$.  The number $\operatorname{tr}(P_3)$ is less than or equal to the number of $y_i(P_3)$ $+$ $2$ $=$ $x_i(P_4)$ $+$ $2$ $=$ $\operatorname{tr}(P_4)$ $+$ $1$, since cross chords consisting of $a$, $b$, and $c$ can be dissolved by deleting any two chords among the three chords $a$, $b$, and $c$.  For the case of deletion of $e$, the deletion of pairs $(a, b)$ or $(a, c)$ is sufficient.  For the case of deletion of $d$ (resp.~$f$), the deletion of $(b, a)$ or $(b, c)$ (resp.~$(c, a)$ or $(c, b)$) is sufficient to make a trivial chord diagram.  
\item Case $m$ $=$ $2$.  For the same reason as in the cases of $m$ $=$ $0$ and $m$ $=$ $1$, $\operatorname{tr}(P_3)$ is less than or equal to the number of $y_i(P_3)$ $+$ $2$ $=$ $x_i(P_4)$ $+$ $2$ $=$ $\operatorname{tr}(P_4)$.  For the deletion of pairs $(d, e)$, the choice of deleting $(a, b)$ is sufficient.  Similarly, for the deletion of pairs $(e, f)$ (resp.~$(f, d)$), the choice of deleting $(a, c)$ (resp.~$(b, c)$) is sufficient to make a trivial chord diagram.  Then, $\operatorname{tr}(P_3)$ $\le$ $\operatorname{tr}(P_4)$.  
\item Case $m$ $=$ $3$.  The number $\operatorname{tr}(P_3)$ is less than or equal to the number of $y_i(P_3)$ $+$ $3$ $=$ $x_i(P_4)$ $+$ $3$ $=$ $\operatorname{tr}(P_4)$, since all the choices of deletion of corresponding chords containing $a$, $b$, and $c$ are sufficient to make a trivial chord diagram.  
\end{itemize}
\end{proof}

\section{Cross chord number modulo $3$ is a strong (1, 3) homotopy invariant.}\label{CrossSec}
In this section, we prove Theorem \ref{strong13invariant}.  

\noindent {{\bf{Proof of Theorem \ref{strong13invariant}.}}
\begin{proof}
\begin{enumerate}
\item \label{proof_strong13_1st} The first homotopy move is adding or deleting an isolated chord.  The isolated chord does not produce any cross chords and does not delete any cross chords.  
\item \label{proof_strong13_strong} Let us look at Fig.~\ref{proof_chord_strong}.  One strong third homotopy move changes a cross chord number by $\pm 3$ concerned with triples of chords $(a, b, c)$ and $(d, e, f)$.  Chords other than $(a, b, c)$ produce the same cross chords as chords other than $(d, e, f)$.  Then, checking the increment of $X(P)$ in Fig.\ref{proof_chord_strong}, $X(P_3)$ $=$ $X(P_4)$ $+$ $3$.  
\item Let us look at Fig.~\ref{proof_chord_weak}.  The discussion is the same as in (\ref{proof_strong13_strong}) above, so the discussion reduces to checking the difference in the number of cross chords between $(a, b, c)$ and $(d, e, f)$.  Checking the increment of $X(P)$ in Fig.~\ref{proof_chord_weak}, $X(P_1)$ $+$ $1$ $=$ $X(P_2)$.  
\end{enumerate}
(\ref{proof_strong13_1st}) and  (\ref{proof_strong13_strong}) immediately imply that $X(P)$ ($\operatorname{mod} 3$) is invariant under strong (1, 3) homotopy.  
\end{proof}

A {\it{positive resolution}} of a knot projection is defined as local replacements at every double point, as in Fig.~\ref{positive_resolution} (cf.~\cite{ito&takimura1}).  
\begin{figure}[htbp]
\includegraphics[width=5cm]{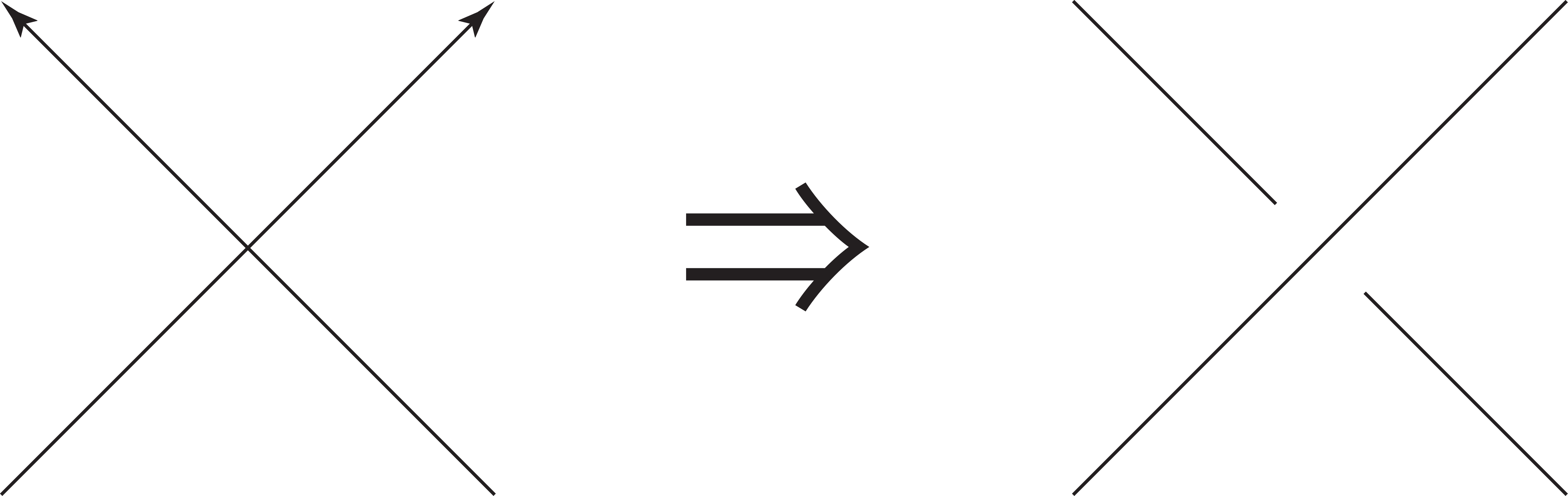}
\caption{Positive Resolution.}\label{positive_resolution}
\end{figure}
This resolution defines the map $p$ from the set of knot projections to the set of knot diagrams.  Moreover, the map $p$ induces the map from the set of weak (1, 3) homotopy classes to the set of knot isotopy classes, denoted by the same symbol $p$, if there is no danger of confusion.  The replacement of all double points as positive resolutions does not change the knot isotopy class (Figs.~\ref{1stPositive} and \ref{3rdPositive}).  
\begin{figure}[htbp]
\includegraphics[width=5cm]{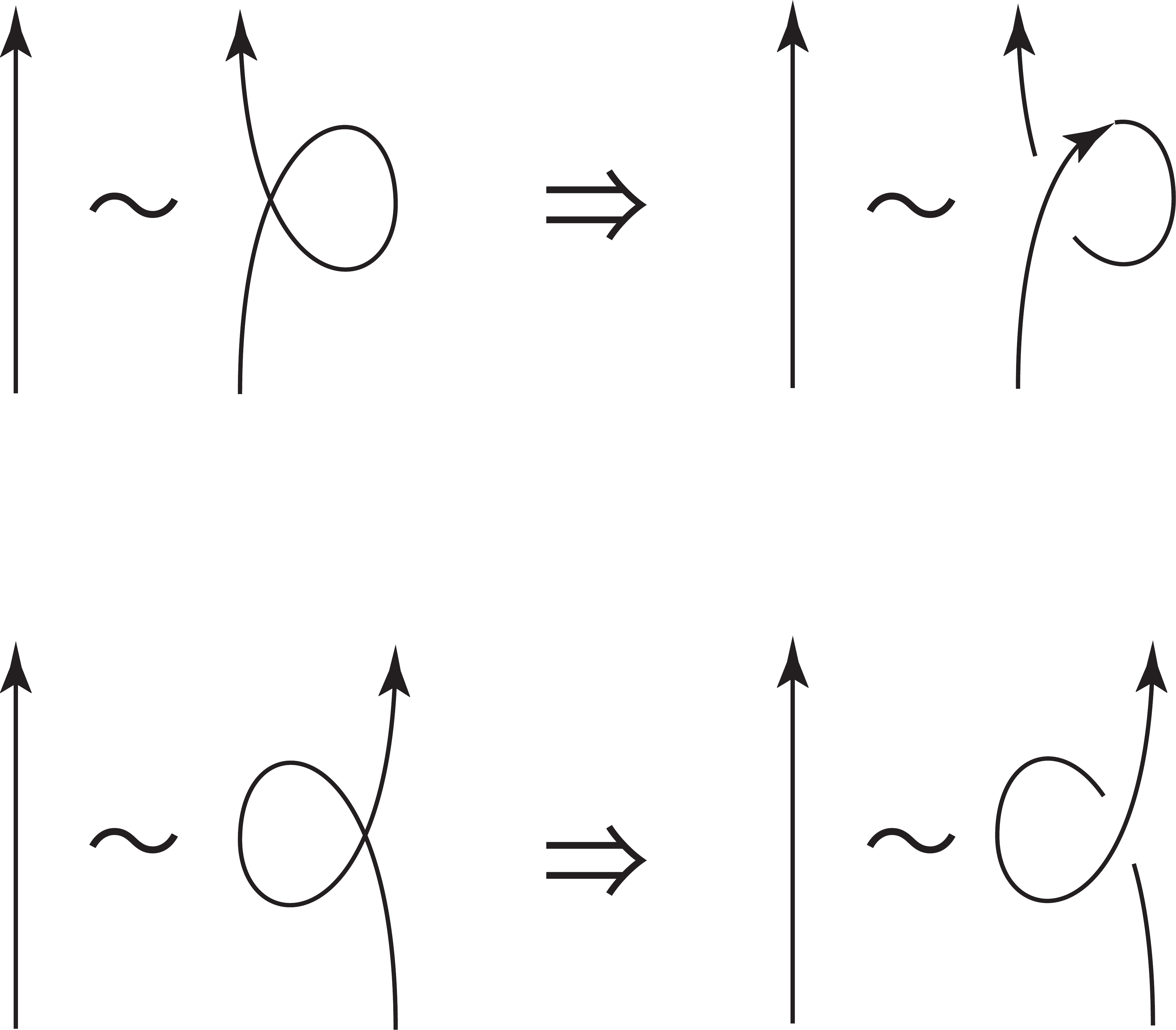}
\caption{The first homotopy move to the first Reidemeister move.}\label{1stPositive}
\end{figure}
\begin{figure}[htbp]
\includegraphics[width=6cm]{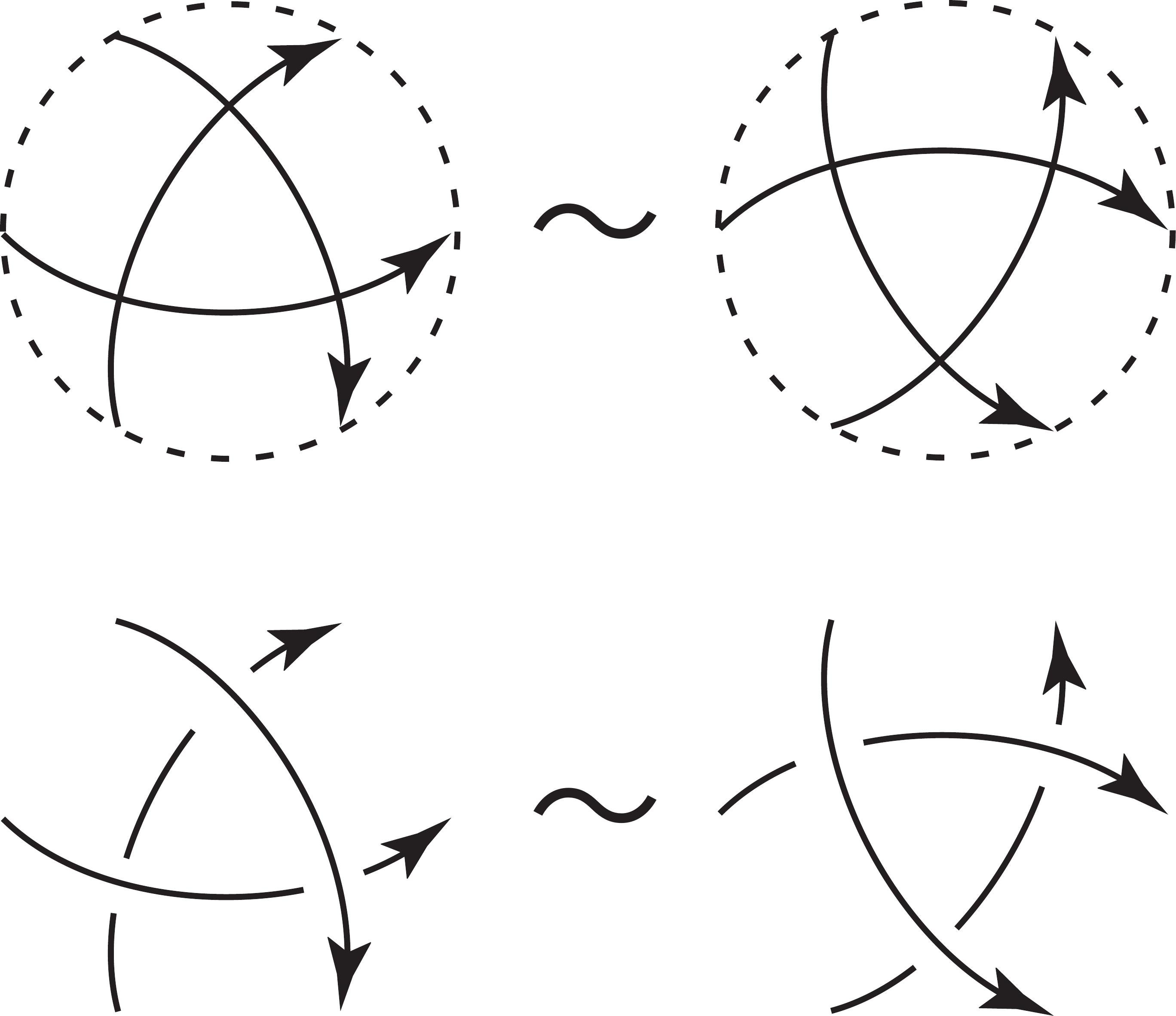}
\caption{The weak third homotopy move to the third Reidemeister move.}\label{3rdPositive}
\end{figure}

Let us recall Hanaki's Theorem (\cite[Page 867, Theorem 1.10]{hanaki_osaka} or \cite[Page 8, Theorem 17]{hanaki_JMJ}).  
\begin{theorem}[Hanaki]\label{hanaki_twist}
Let $P$ be a knot projection.  The knot projection $P$ satisfies $\operatorname{tr}(P)$ $=$ $2$ if and only if $P$ is one of the knot projections defined by Fig.~\ref{twistKnotP} or its versions obtained by the first homotopy moves.  
\end{theorem}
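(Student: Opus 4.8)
The plan is to work entirely with Hanaki's chord-diagram reformulation of the trivializing number (Theorem~\ref{hanaki_thm}(\ref{hanaki_chord})), together with one classical fact about realizability: for a generic curve on $S^{2}$, each chord of its chord diagram is interlaced with an even number of other chords (Gauss's parity condition). For the ``if'' direction I would take each knot projection $T$ in Fig.~\ref{twistKnotP}, read off $C\!D_{T}$, and note that it consists of two ``clasp'' chords $a,b$ together with a nested, hence pairwise non-interlaced, family of ``twist'' chords; deleting $a$ and $b$ leaves a trivial chord diagram, so $\operatorname{tr}(T)\le 2$ by Theorem~\ref{hanaki_thm}(\ref{hanaki_chord}), while $C\!D_{T}$ is non-trivial, so $\operatorname{tr}(T)\neq 0$, whence $\operatorname{tr}(T)=2$ by evenness (Theorem~\ref{hanaki_thm}(\ref{hanaki_even})). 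Since $\operatorname{tr}$ is invariant under the first homotopy move (Theorem~\ref{weak13trivial_thm}(1)), every projection obtained from such a $T$ by first homotopy moves also has trivializing number two.

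For the ``only if'' direction, assume $\operatorname{tr}(P)=2$. By Theorem~\ref{hanaki_thm}(\ref{hanaki_chord}) there are chords $a,b$ of $C\!D_{P}$ whose deletion yields a trivial chord diagram, while $C\!D_{P}$ itself is non-trivial; hence every interlaced pair of chords of $C\!D_{P}$ contains $a$ or $b$. Now apply Gauss's parity condition to an arbitrary chord $c\notin\{a,b\}$: since $c$ can only be interlaced with $a$ or $b$, and the number of chords interlacing $c$ is even, $c$ is interlaced either with both of $a,b$ or with neither. A chord interlaced with neither is interlaced with nothing at all, and an innermost such chord is a $1$-gon chord; deleting them one at a time by first homotopy moves, I reduce $P$ to a projection $P'$ with $\operatorname{tr}(P')=2$ whose chord diagram is $\{a,b\}\cup\{c_{1},\dots,c_{n}\}$, where each $c_{i}$ is interlaced with both $a$ and $b$ and the $c_{i}$ are pairwise non-interlaced.

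It remains to identify this reduced chord diagram, and here I split on whether $a$ and $b$ are interlaced. In either case the four endpoints of $a$ and $b$ cut the circle into four arcs. If $a$ and $b$ are interlaced, each $c_{i}$ must join a pair of opposite arcs, all the $c_{i}$ must join the \emph{same} pair (otherwise two of them would be interlaced), and being pairwise non-interlaced they form a nested family; Gauss's parity condition applied to $a$ (interlaced with $b$ and with all $n$ of the $c_{i}$) then forces $n$ to be odd. If $a$ and $b$ are not interlaced, an analogous arc analysis shows the $c_{i}$ again form a nested family joining one fixed pair of arcs, and the parity condition now forces $n$ to be even, with $n=0$ impossible since that would make $C\!D_{P}$ trivial. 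In both cases the chord diagram of $P'$ is exactly that of one of the twist knot projections in Fig.~\ref{twistKnotP}, the two parities corresponding to the two pictured shapes and the trefoil projection being the smallest instance; since it is classical that a generic curve on $S^{2}$ is determined up to homeomorphism of $S^{2}$ by its chord diagram, $P'$ is that twist knot projection, and $P$ is obtained from it by first homotopy moves.

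The step I expect to be the main obstacle is handling realizability correctly: one should \emph{not} argue by passing to sub-chord-diagrams, because deleting a chord from a knot projection may disconnect the underlying curve and the resulting chord diagram need not be realizable by a single closed curve; everything must instead be extracted from Gauss's parity condition applied directly to $C\!D_{P}$ and to its first-homotopy reductions, which remain knot projections. Beyond that, the remaining work is the routine but slightly delicate arc bookkeeping that matches the reduced chord diagram, in each parity case, with the corresponding picture in Fig.~\ref{twistKnotP}, together with the verification that the $1$-gon deletions do not disturb the chords $a$, $b$, or the $c_{i}$.
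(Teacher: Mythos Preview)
The paper does not prove this statement at all: Theorem~\ref{hanaki_twist} is quoted as a result of Hanaki, with citations to \cite[Theorem~1.10]{hanaki_osaka} and \cite[Theorem~17]{hanaki_JMJ}, and no argument is given. So there is no ``paper's own proof'' to compare your proposal against.

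That said, your argument is a sound independent proof. The ``if'' direction is routine once one reads off the chord diagram of each $\overline{T(n)}$. For the ``only if'' direction, combining Theorem~\ref{hanaki_thm}(\ref{hanaki_chord}) with Gauss's parity condition is exactly the right idea: it forces every chord other than $a,b$ to be interlaced with both or neither, the isolated ones are stripped off as $1$-gons (note that removing an isolated chord cannot create new isolated chords, so this terminates with precisely $\{a,b,c_1,\dots,c_n\}$ remaining), and your arc analysis in the two cases $a\!\perp\!b$ interlaced/non-interlaced is correct, including the parity of $n$. The resulting chord diagrams are exactly those of the two clasp types of $\overline{T(n)}$.

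The only place to tighten is the final sentence invoking the ``classical'' fact that a generic curve on $S^{2}$ is determined by its chord diagram. This is true, but since you have already arranged that $P'$ has no isolated chords, its image is a $2$-connected $4$-regular graph in $S^{2}$, and for the very explicit diagrams you obtain it is both easier and safer to reconstruct the face structure directly from the Gauss code and match it by hand against Fig.~\ref{twistKnotP}, rather than appeal to a general uniqueness theorem.
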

\begin{figure}
\includegraphics[width=4cm]{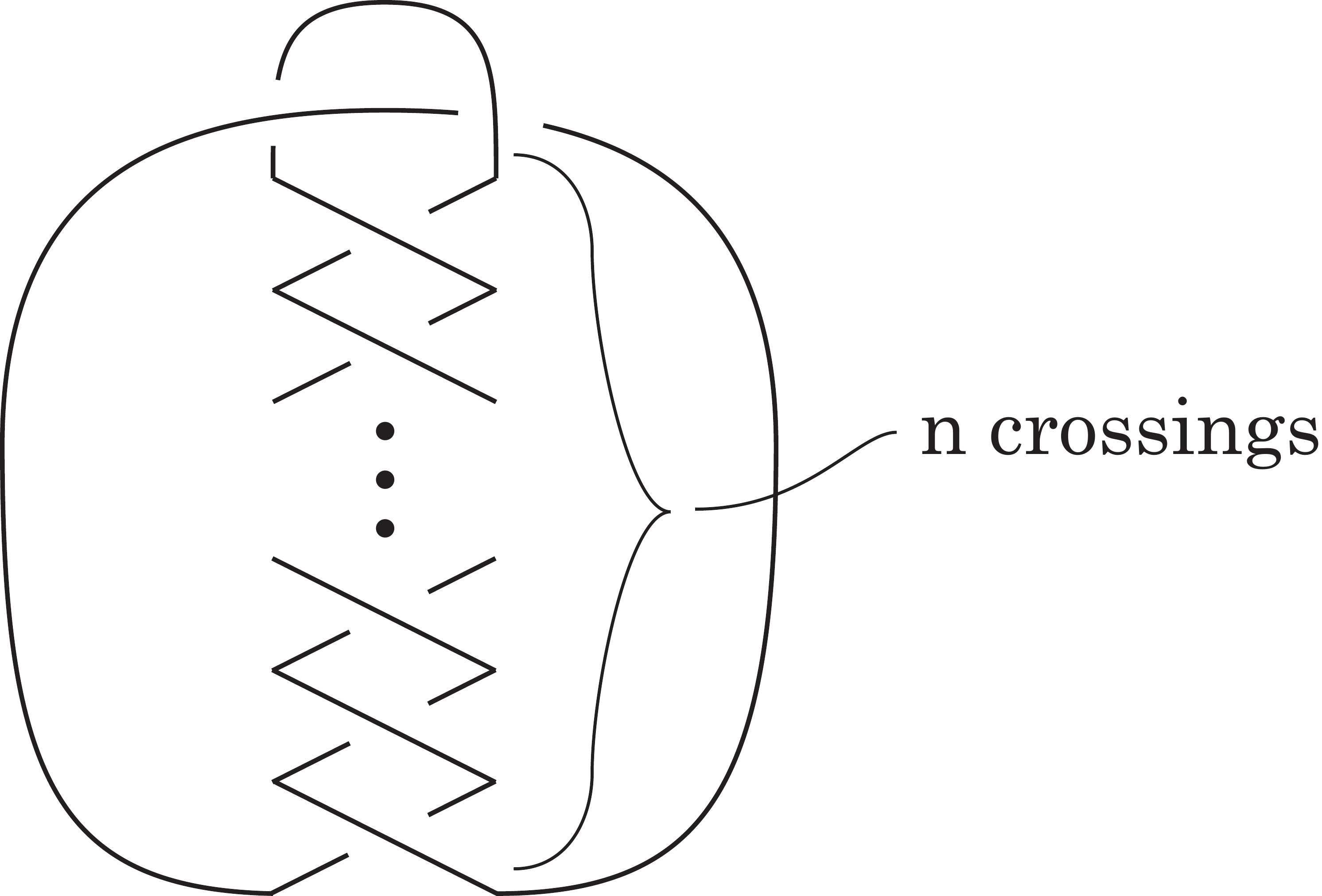}
\caption{Knots $T(n)$.}\label{twistKnot}
\end{figure}

Below, we consider that the knot projections shown in Fig.~\ref{twistKnotP} are under weak (1, 3) homotopy.  For every positive odd integer $n$, let $T(n)$ be the knot defined by Fig.~\ref{twistKnot}.  
\begin{figure}[htbp]
\includegraphics[width=4cm]{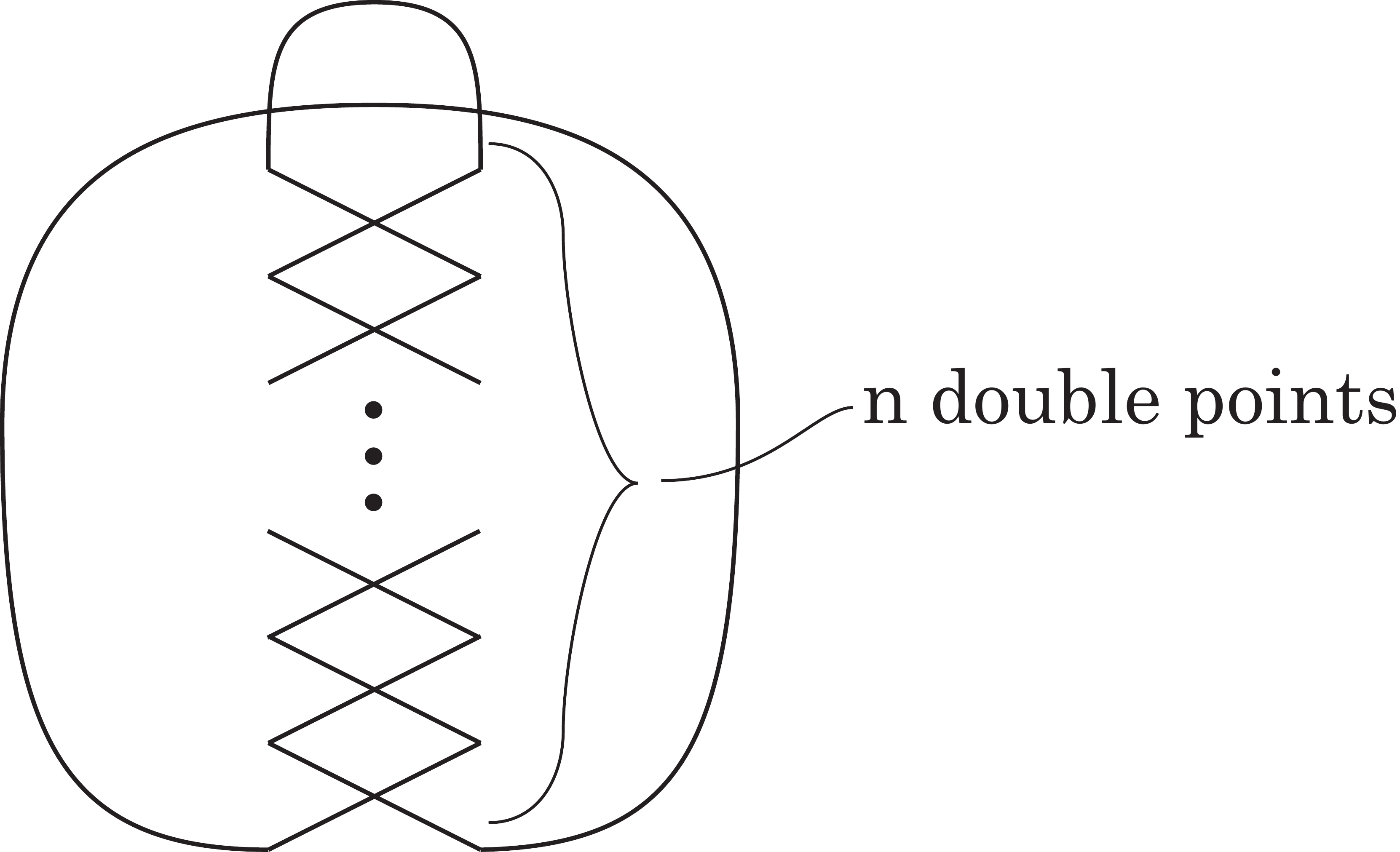}
\caption{Knot projections $\overline{T(n)}$.}\label{twistKnotP}
\end{figure}
For knots $T(n)$, it is well known that $T(n+i)$ $\neq$ $T(n+j)$ for even integers $i$, $j$ ($i \neq j$).  

On the other hand, for every positive odd integer $n$, it is easy to see that $\overline{T(n)}$ is equivalent to $\overline{T(n+1)}$ under weak (1, 3) homotopy where knot projections $\overline{T(n)}$ are defined by Fig.~\ref{twistKnotP}.  Then, we have
\begin{proposition}\label{classification_weak}
For every positive integer $n$, let $\overline{T(n)}$ be the knot projection defined by Fig.~\ref{twistKnotP}.  Under weak (1, 3) homotopy, the equivalence class $[\overline{T(n+i)}]$ is different from $[\overline{T(n+j)}]$, where $i \neq j$, and  $i$ and $j$ are even integers.  In addition, for every positive odd integer $n$, each equivalence class $[\overline{T(n)}]$ contains $\overline{T(n+1)}$.  
\end{proposition}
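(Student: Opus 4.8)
The plan is to exploit the positive-resolution map $p$ from weak $(1,3)$ homotopy classes to knot isotopy classes introduced earlier in this section. Since $p$ is a genuine invariant of weak $(1,3)$ homotopy, to separate the classes $[\overline{T(n+i)}]$ it suffices to compute the knot $p(\overline{T(n)})$ for each $n$ and to feed the result into the classification of twist knots recalled just above.

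Concretely, I would first perform the positive resolution at every double point of the knot projection $\overline{T(n)}$ of Fig.~\ref{twistKnotP}. The $1$-gons created in the twist region are then removed by first Reidemeister moves --- which is precisely the image under $p$ of a first homotopy move, cf.\ Fig.~\ref{1stPositive} --- and the clasp part is simplified by a short fixed sequence of Reidemeister moves; the outcome is a standard diagram of a twist knot. Because $p$ is a weak $(1,3)$ invariant and $\overline{T(2k-1)} \stackrel{w}{\sim} \overline{T(2k)}$ (the observation recalled before the statement), the isotopy type of $p(\overline{T(n)})$ depends on $n$ only through which pair $\{2k-1,2k\}$ contains $n$, and on distinct pairs it is the twist knot with distinct numbers of half-twists. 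Carrying out this identification --- i.e.\ tracking the over/under data produced by the positive resolution through the whole twist region and clasp, and checking that the essential crossings are not accidentally undone --- is the step I expect to be the main obstacle.

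Granting the identification, the first assertion is immediate. Suppose $i \neq j$ are even integers; then $n+i$ and $n+j$ have the same parity and differ by at least $2$, so they lie in different pairs $\{2k-1,2k\}$, and hence $p(\overline{T(n+i)})$ and $p(\overline{T(n+j)})$ are twist knots with different numbers of half-twists. These are non-isotopic: this is exactly the fact $T(n+i) \neq T(n+j)$ recalled above, and if a self-contained argument is wanted one notes that the determinant of a twist knot is an affine, hence injective, function of its number of half-twists. Since $p$ descends to weak $(1,3)$ classes, the relation $\overline{T(n+i)} \stackrel{w}{\sim} \overline{T(n+j)}$ would force $p(\overline{T(n+i)}) = p(\overline{T(n+j)})$, a contradiction; therefore $[\overline{T(n+i)}] \neq [\overline{T(n+j)}]$ for all even $i \neq j$.

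For the remaining assertion I would simply record the explicit weak $(1,3)$ homotopy from $\overline{T(n)}$ to $\overline{T(n+1)}$ when $n$ is odd: a single first homotopy move introduces a double point into the twist region, after which one or two weak third homotopy moves slide it into the standard position of $\overline{T(n+1)}$. The only point requiring attention is that every third homotopy move used is of the weak type, and this is read off directly from the orientation criterion of Fig.~\ref{oriented_strong_weak}.
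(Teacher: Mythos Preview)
Your approach is essentially the same as the paper's: the proposition is stated as an immediate consequence of the paragraph preceding it, which introduces the positive-resolution map $p$ as a weak $(1,3)$ invariant, defines the knots $T(n)$ (for odd $n$) as the images $p(\overline{T(n)})$, cites the well-known fact that these twist knots are pairwise non-isotopic, and records the elementary weak $(1,3)$ equivalence $\overline{T(n)}\stackrel{w}{\sim}\overline{T(n+1)}$ for odd $n$. Your write-up simply makes these steps more explicit; the only cosmetic point is that after applying $p$ you are working with knot diagrams and ordinary Reidemeister moves, so the phrase ``image under $p$ of a first homotopy move'' is not needed---you are just simplifying the diagram $p(\overline{T(n)})$ to the standard form $T(n)$.
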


\section{Strong and weak (1, 3) homotopies on knot projections with trivializing number two.}\label{strong&weak}
{\bf{Proof of Theorem \ref{weak&strong_thm}.}}
\begin{proof}
If two knot projections $P_1$ and $P_2$ are related by only the first homotopy moves, then $P_1$ is equivalent to $P_2$ under not only weak (1, 3) homotopy but also under strong (1, 3) homotopy.  

Then, we will prove the converse.  Assume that two knot projections $P_1$ and $P_2$ are equivalent under not only weak (1, 3) homotopy but also strong (1, 3) homotopy.  
Let $n$ be an odd integer.  Proposition \ref{classification_weak} gives us that for every $n$, any pair of equivalence classes $\{[\overline{T(n)}]\}$ are different under weak (1, 3) homotopy.  Then, to satisfy the assumption, two knot projections $P_1$ and $P_2$ belong to one weak homotopy equivalence class $[\overline{T(n)}]$.  Here, note that representatives of $[\overline{T(n)}]$ are expressed by $\overline{T(n)}$ and $\overline{T(n+1)}$ and projections obtained from these by a repeated applications of the first homotopy moves.  

\begin{figure}[htbp]
\includegraphics[width=10cm]{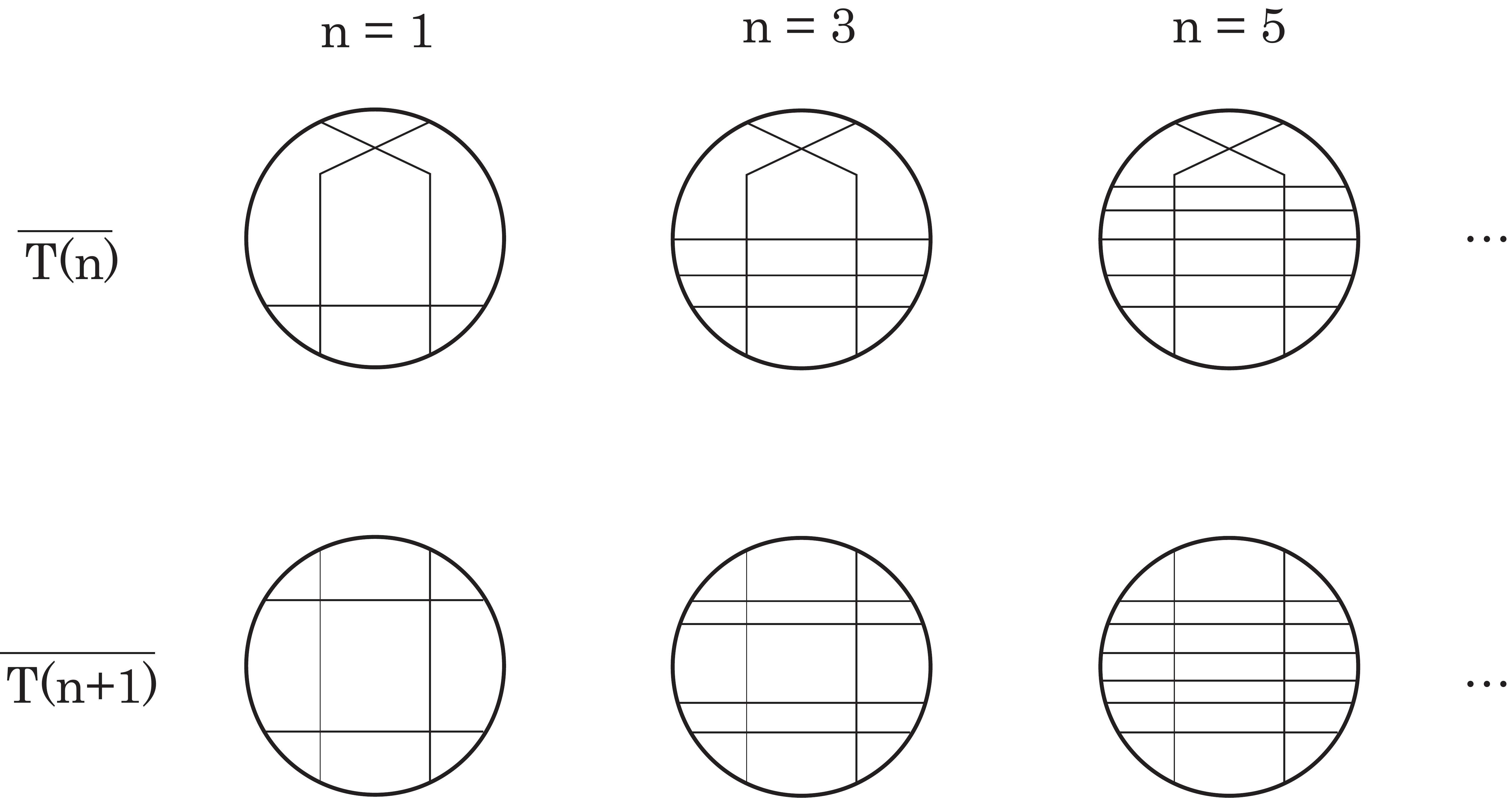}
\caption{Chord diagrams $\overline{T(n)}$ and $\overline{T(n+1)}$.}\label{proof_crossChord}
\end{figure} 
Comparing $C\!D_{\overline{T(n)}}$ and $C\!D_{\overline{T(n+1)}}$ in Fig.~\ref{proof_crossChord}, we obtain $X(\overline{T(n)})$ $+$ $1$ $=$ $X(\overline{T(n+1)})$.  Then $X(\overline{T(n)})$ $+$ $1$ $\equiv$ $X(\overline{T(n+1)})$ ($\operatorname{mod} 3$).  This implies $\overline{T(n)}$ $\stackrel{s}{\nsim}$ $\overline{T(n+1)}$.  Then, for every weak (1, 3) homotopy class $[\overline{T(n)}]$, $\overline{T(n)}$ $\stackrel{s}{\nsim}$ $\overline{T(n+1)}$.  

In summary, if two knot projections $P_1$ and $P_2$ are equivalent under not only weak (1, 3) homotopy but also strong (1, 3) homotopy, the only possibility left is that $P_1$ and $P_2$ are related only by the first homotopy moves.  This completes the proof of Theorem \ref{weak&strong_thm}.
\end{proof}

\begin{remark}
For an odd integer, it is easy to see that $\overline{T(n+1)}$ is equivalent to $\overline{T(n+2)}$ under strong (1, 3) homotopy by using one first homotopy move and one strong third homotopy move.  Then, we have
$\overline{T(n+1)}$ $\stackrel{s}{\sim}$ $\overline{T(n+2)}$ and $\overline{T(n+i)}$ $\stackrel{w}{\nsim}$ $\overline{T(n+j)}$ for $i \neq j$, and $i$ and $j$ are even integers (cf.~Proposition \ref{classification_weak}).  

On the other hand, for an odd integer $n$, it is also easy to see that $\overline{T(n)}$ is equivalent to $\overline{T(n+1)}$ under weak (1, 3) homotopy by using one first homotopy move and one weak third homotopy move, so $\overline{T(n)}$ $\stackrel{w}{\sim}$ $\overline{T(n+1)}$.  In the proof of Theorem \ref{weak&strong_thm}, we showed that $\overline{T(n)}$ $\stackrel{s}{\nsim}$ $\overline{T(n+1)}$.  

Then, for example, there exist interesting sequences such as the following.  
\begin{equation}
\begin{split}
&T(1) \stackrel{w}{\sim} T(2) \stackrel{s}{\sim} T(3) \stackrel{w}{\sim} T(4) \stackrel{s}{\sim}, \dots,\\
&T(1) \stackrel{s}{\nsim} T(2) \stackrel{w}{\nsim} T(3) \stackrel{s}{\nsim} T(4) \stackrel{w}{\nsim}, \dots.
\end{split}
\end{equation}

\end{remark}

\section{Strong (1, 3) homotopy class containing the trivial knot projection.}\label{strongTrivial}
In this section, we prove Theorem \ref{strongTrivial_thm}.  

{\bf{Proof of Theorem \ref{strongTrivial_thm}.}}
\begin{proof}
First, we prove Proposition \ref{lemStrong}.  
\begin{proposition}\label{lemStrong}
Let $(1a)$ and $(1b)$ $($resp.~$(3a)$ and $(3b)$$)$ be the first $($resp.~strong third$)$ homotopy moves defined by Fig.~\ref{Fig1a1b} $($resp.~Fig.~\ref{Fig3a3b}$)$.  
\begin{figure}[htbp]
\includegraphics[width=3cm]{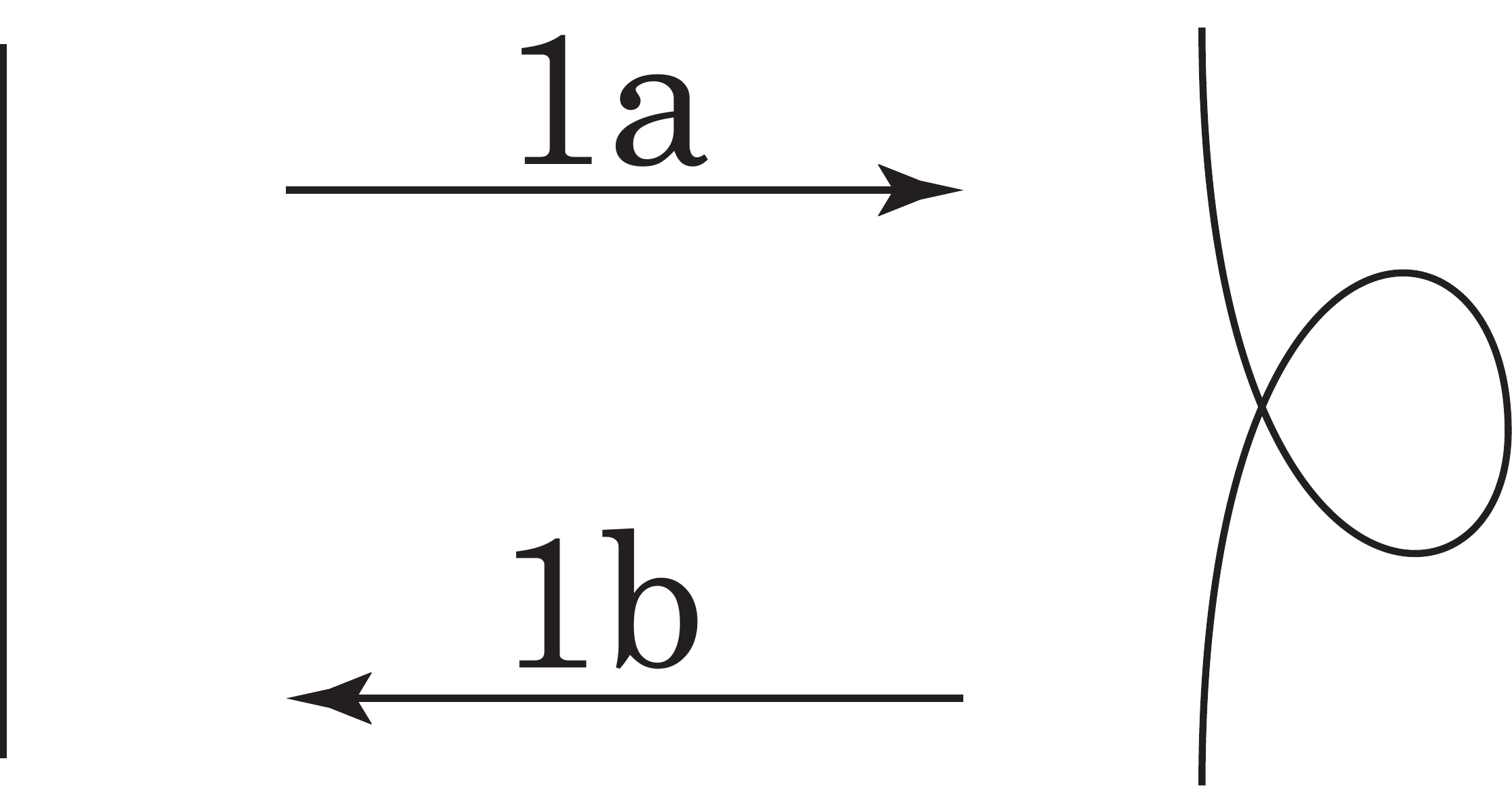}
\caption{Local move $(1a)$ and its inverse $(1b)$ of the first homotopy moves.}\label{Fig1a1b}
\end{figure}
\begin{figure}[htbp]
\includegraphics[width=4cm]{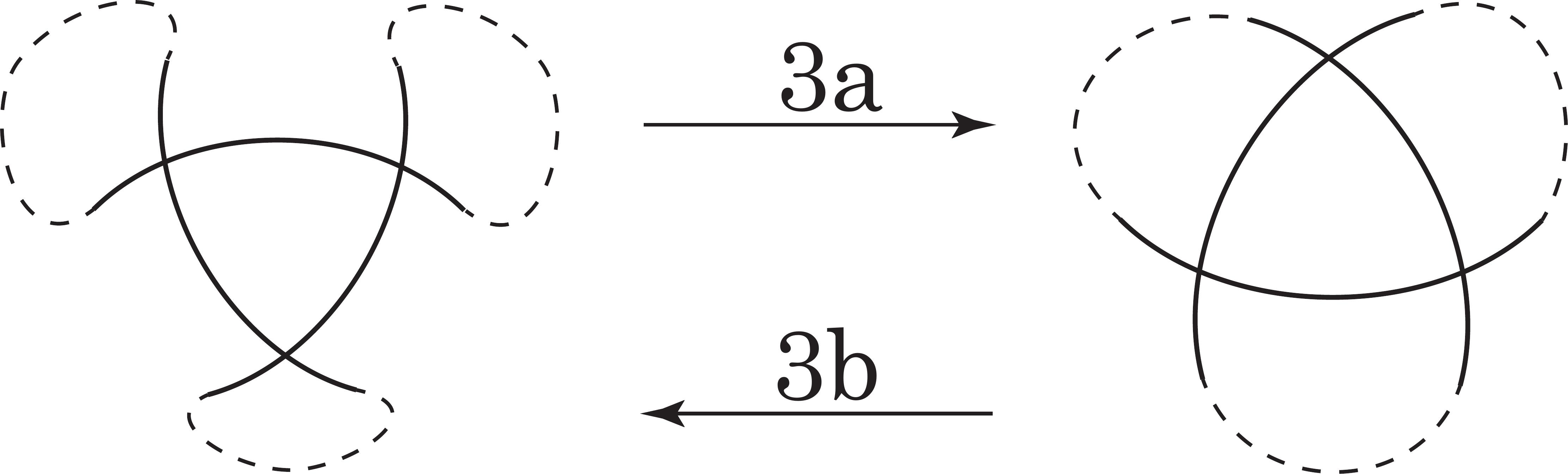}
\caption{Local move $(3a)$ and its inverse $(3b)$ of the strong third homotopy move.}\label{Fig3a3b}
\end{figure}
Let $P$ be a knot projection.  If $P$ is equivalent to the trivial knot projection \begin{picture}(10,15)
\put(5,3){\circle{8}}
\end{picture} under strong $(1, 3)$ homotopy, then $P$ is obtained from
\begin{picture}(10,15)
\put(5,3){\circle{8}}
\end{picture}
by a finite sequence of moves of type $(1a)$ and $(3a)$.  
\end{proposition}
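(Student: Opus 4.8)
The plan is to show that any sequence of moves relating $P$ to the trivial knot projection can be "directed" so that only the expansive moves $(1a)$ and $(3a)$ are used. The key observation is that the inverse moves $(1b)$ and $(3b)$ are the only ways the double-point count can decrease, and we want to argue that whenever such a decreasing move is used in a sequence ending at the trivial projection, we can eliminate it at the cost of rearranging later moves. Concretely, I would reverse the given sequence: since strong $(1,3)$ homotopy is an equivalence relation, a sequence from $P$ to $\bigcirc$ reverses to a sequence from $\bigcirc$ to $P$, in which $(1b)$ becomes $(1a)$, $(3b)$ becomes $(3a)$, $(1a)$ becomes $(1b)$, and $(3a)$ becomes $(3b)$. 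So it suffices to prove the following normal-form statement: any sequence of first and strong third homotopy moves starting from $\bigcirc$ can be replaced by one using only $(1a)$ and $(3a)$.

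The main step is an exchange/commutation argument. Suppose in such a sequence the first occurrence of a "contracting" move — a $(1b)$ or a $(3b)$ — happens at stage $k$. I would analyze the local picture: a $(1b)$ removes a $1$-gon, and a $(3b)$ performs the strong triple-point perestroika in the contracting direction. The strategy is to push this contracting move earlier in the sequence, commuting it past the preceding expansive moves, until it is adjacent to a move that created the very strands it destroys, at which point the pair cancels (a move immediately followed by its inverse can be deleted). When the contracting move involves strands disjoint from the immediately preceding move, the two moves commute verbatim. When they overlap, one must check a finite list of local configurations; in each, either the two moves still commute after a harmless relabeling, or they form a creation/annihilation pair that cancels, or — and this is the delicate case — the overlap produces a short alternative sequence of expansive moves achieving the same net effect. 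A convenient bookkeeping device here is the trivializing number (Theorem~\ref{weak13trivial_thm}) together with the cross chord number $X(P)$ modulo $3$ (Theorem~\ref{strong13invariant}): since both are strong $(1,3)$ invariants, they constrain which local replacements can appear and help verify that the rewritten sequence is legitimate. Starting from $\bigcirc$ we have $\operatorname{tr} = 0$ and $X \equiv 0$, so the very first move must be a $(1a)$, giving the base of an induction on the number of contracting moves.

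I would then formalize this as an induction on the total number of $(1b)$ and $(3b)$ moves in the sequence: the base case (zero such moves) is exactly the desired conclusion, and the inductive step removes one contracting move — either by cancellation against a creating move once it has been commuted into position, or by a local rewrite that does not introduce new contracting moves — thereby strictly decreasing the count. Reversing the resulting sequence back gives $P$ from $\bigcirc$ using only $(1a)$ and $(3a)$, which is the statement of the proposition.

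The hard part will be the overlap analysis in the commutation step: enumerating, up to symmetry, all the ways a contracting $1$-gon removal or strong triple-point perestroika can share strands with the preceding expansive move, and checking in each overlapping configuration that one can either cancel the pair or reroute through expansive moves only. I expect the strong-third-after-strong-third overlaps to be the most involved, since the strong triple-point perestroika can interact with itself in several nontrivial patterns; carefully drawing the chord-diagram pictures (as in Fig.~\ref{proof_chord_strong}) and tracking the cross chords through each case is where the real work lies.
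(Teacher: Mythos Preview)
Your overall strategy---reverse the sequence, then commute each contracting move $(1b)$ or $(3b)$ leftward past the preceding expansive move until it cancels---is exactly the paper's approach. The four commutation cases $w(1a)(1b)$, $w(1a)(3b)$, $w(3a)(1b)$, $w(3a)(3b)$ are analyzed just as you outline, and the disjoint-support subcases are routine.

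There is, however, a genuine error and a genuine gap in your proposal. First, the trivializing number is \emph{not} a strong $(1,3)$ invariant: Theorem~\ref{weak13trivial_thm}(3) says it can change by $\pm 2$ under a strong third move, so you cannot use it as a ``bookkeeping device'' here. Second, and more seriously, the invariant $X \pmod 3$ does not do the work you need in the delicate $(3a)(3b)$ overlap case. Since $X \pmod 3$ is preserved along \emph{every} strong $(1,3)$ sequence, it cannot rule out any intermediate projection that actually occurs in such a sequence; in particular it says nothing about which local overlap patterns between two coherent $3$-gons are possible. What the paper does instead is introduce a new $\{0,1\}$-valued invariant $H$ (Lemma~\ref{H_invariant}): $H(P)=1$ exactly when the chord diagram $C\!D_P$ contains the sub-chord
\begin{picture}(10,10)
\put(5,3){\circle{10}}
\qbezier(0,3)(5,3)(10,3)
\qbezier(3,7)(3,3)(3,-1.5)
\qbezier(7,7)(7,3)(7,-1.5)
\end{picture},
and $H$ is invariant under strong $(1,3)$ homotopy. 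The two problematic overlap configurations---where the $3$-gons of $(3a)$ and $(3b)$ share exactly one or exactly two vertices---force $H=1$ (Lemma~\ref{Lem(3a)(3b)}), which contradicts $H(\bigcirc)=0$; hence those overlaps never occur in a sequence starting from the trivial projection, and the remaining overlap case (all three vertices shared) gives $w(3a)(3b)=w$. Without this $H$-invariant (or an equivalent device detecting that specific sub-chord), your case analysis for the $(3a)(3b)$ overlap cannot be completed.
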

\noindent {{\bf{Proof of Proposition \ref{lemStrong}.}}}
The style of this proof follows that of \cite[Proof of Lemma 1]{ito&takimura1}.  

Let $n$ be an arbitrary integer greater than $1$.  Let $w$ be a sequence of $n-2$ moves consisting of $(1a)$ and $(3a)$.  We use the convention that the sequence $w$ followed by one $(1a)$ move is denoted by $w(1a)$.  For the other moves, (e.g.~$(1b)$, $(3a)$, or $(3a)(1b)$), the same convention applies (e.g.,~$w(1b)$, $w(3a)$, or $w(3a)(1b)$).  Let $P_i$ be the $i$th knot projection appearing in the sequence of the first and strong third homotopy moves of length $n$.  In the following discussion, we often use the symbol $Q$, which stands for a knot projection.  We also use the convention that if the sequence $w(1a)(1b)$ can be replaced with $w$, we denoted this by $w(1a)(1b)$ $=$ $w$.  We apply the same convention to all similar cases that appear in the following.  

Below, we make claims about the four cases of the first appearance of $(1b)$ or $(3b)$ in the sequence $P_1$ $\to$ $P_2$ $\to \dots \to$ $P_{n-1}$ $\to$ $P_n$ $\to$ $P_{n+1}$ of the first and strong third homotopy moves.  
\begin{itemize}
\item Case 1: $w(1a)(1b)$ $=$ $w(1b)(1a)$, 
\item Case 2: $w(1a)(3b)$ $=$ $w(3b)(1a)$, 
\item Case 3: $w(3a)(1b)$ $=$ $w(1b)(3a)$, 
\item Case 4: $w(3a)(3b)$ $=$ $w(3b)(3a)$.  
\end{itemize}

\noindent Case 1: The last two moves $(1a)(1b)$ can be expressed as Fig.~\ref{Case1(1a)(1b)}.  Let $\partial x$ and $\partial y$ be boundaries of $1$-gons as illustrated in Fig.~\ref{Case1(1a)(1b)}.  
\begin{figure}[htbp]
\includegraphics[width=4cm]{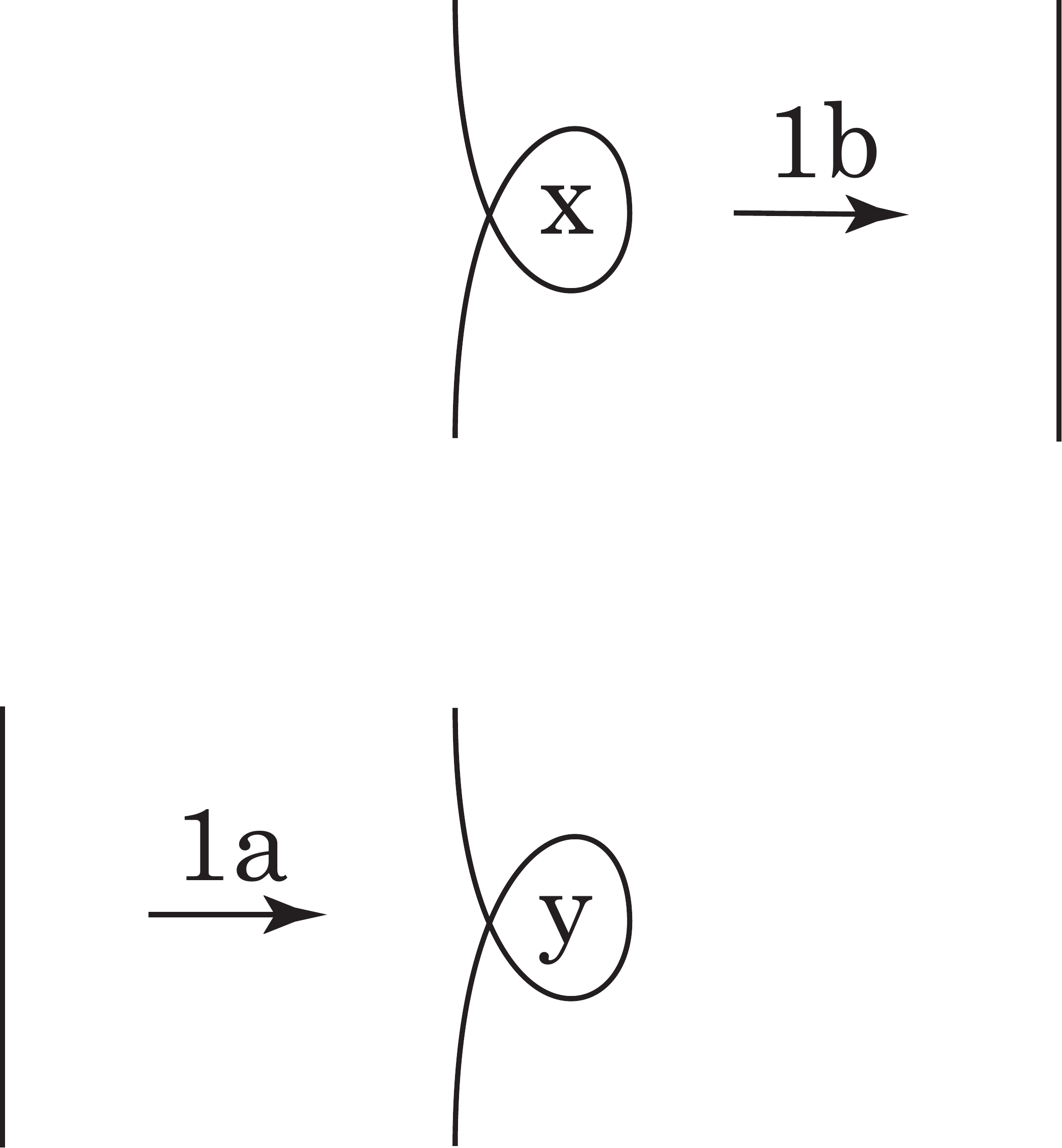}
\caption{The last two moves $(1a)(1b)$ of $w(1a)(1b)$ of Case 1.}\label{Case1(1a)(1b)}
\end{figure}
\begin{enumerate}[(i)]
\item \label{1i} If $\partial x \cap \partial y$ $\neq$ $\emptyset$, then there are two cases of the pair $\partial x$ and $\partial y$, as in Fig.~\ref{Case1-i}.  In both cases, by Fig.~\ref{Case1-i}, we have $w(1a)(1b)$ $=$ $w$.  
\begin{figure}[htbp]
\includegraphics[width=6cm]{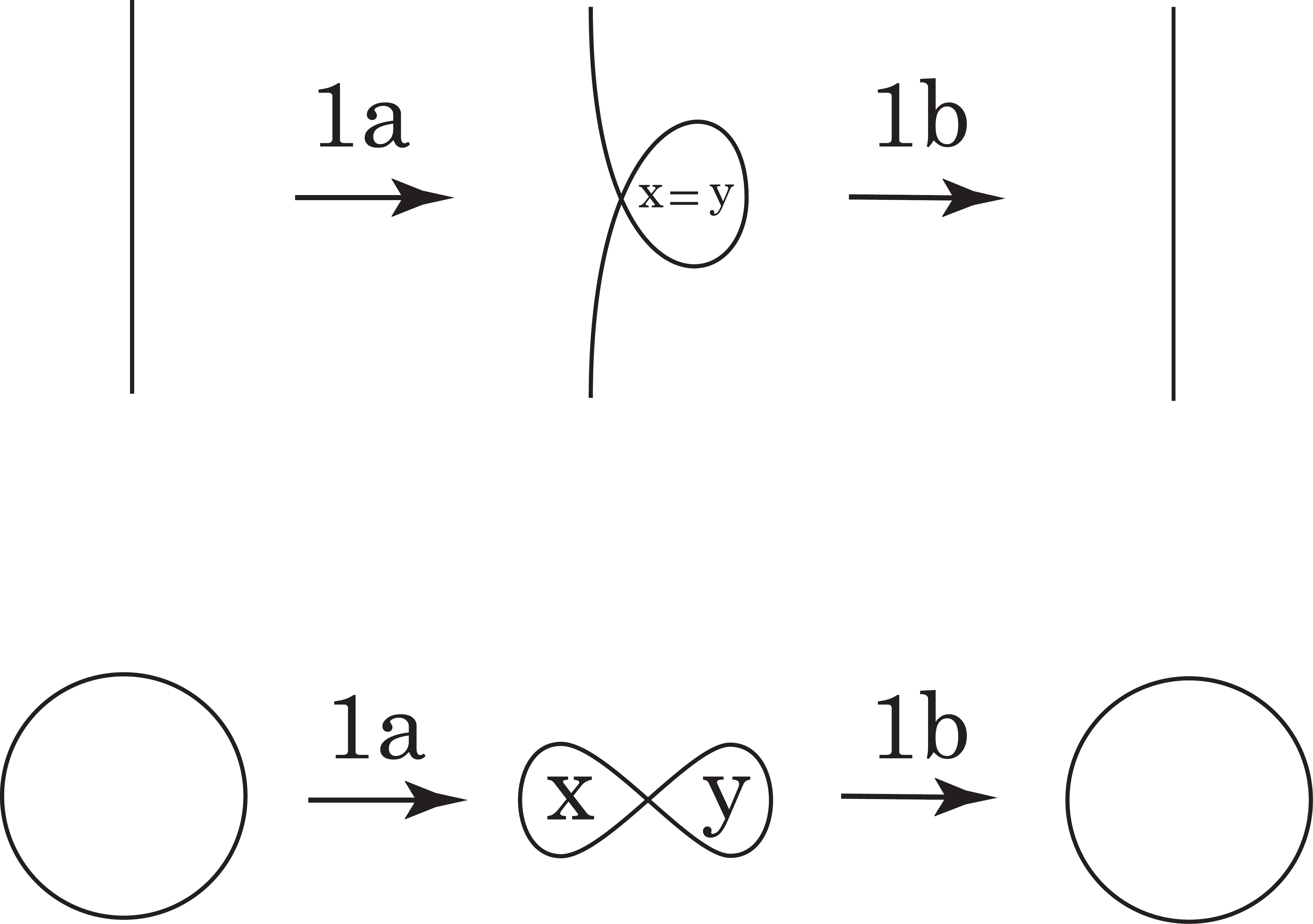}
\caption{Case 1-(\ref{1i}).  Sequence $w(1a)(1b)$ $=$ $w$ in each case of $\partial x$ $=$ $\partial y$ (upper) and $\partial x \cap \partial y$ $=$ $\{ {\text{one~vertex}} \}$.  }\label{Case1-i}
\end{figure}
\item \label{1ii} If $\partial x \cap \partial y$ $=$ $\emptyset$, by Fig.~\ref{Case1-ii}, we have $w(1a)(1b)$ $=$ $w(1a)(1b)(1b)(1a)$ $=$ $w(1b)(1a)$.  
\begin{figure}[htbp]
\includegraphics[width=5cm]{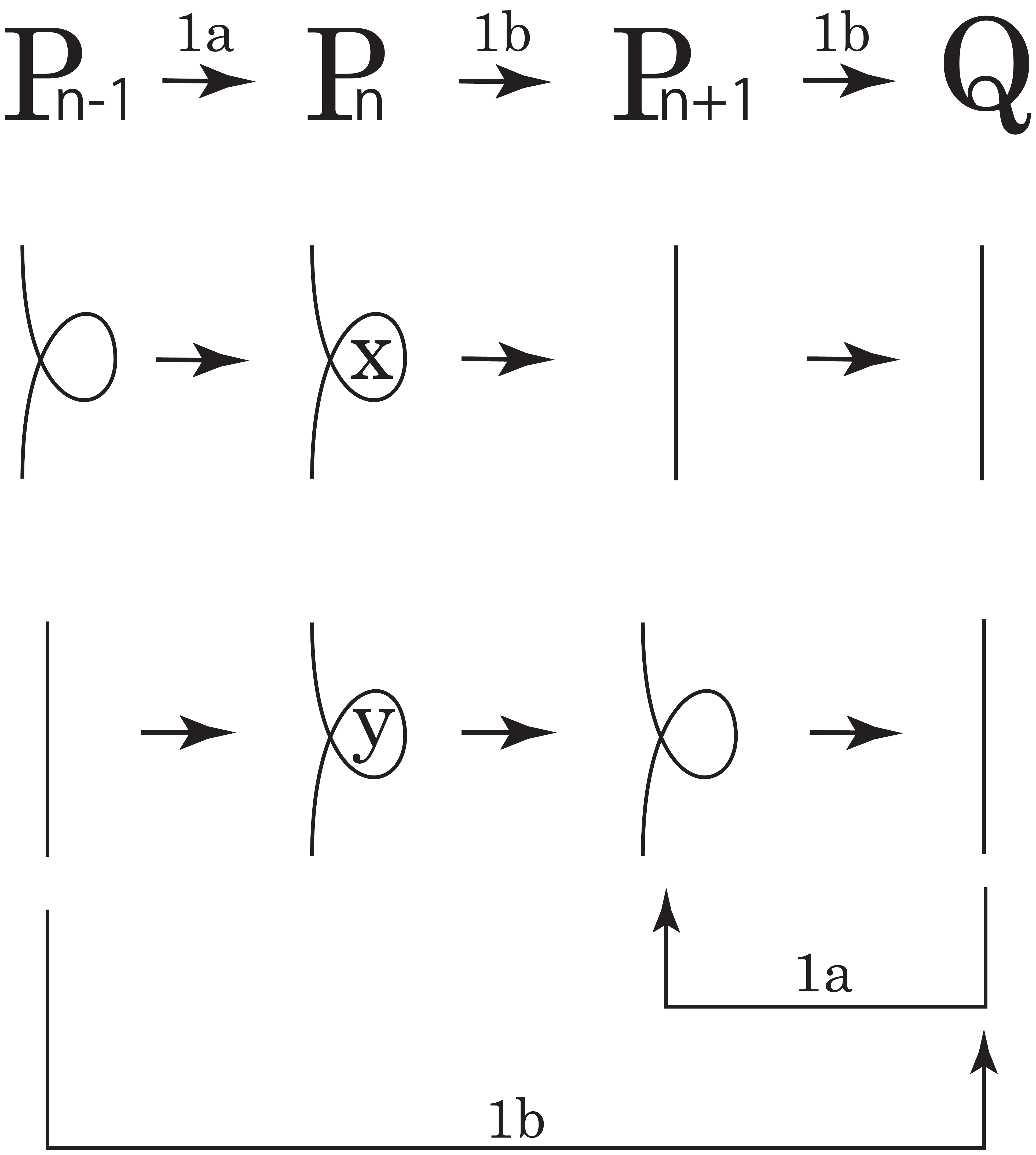}
\caption{Case 1-(\ref{1ii}).  The figure shows $w(1a)(1b)$ $=$ $w(1b)(1a)$.}\label{Case1-ii}
\end{figure}
\end{enumerate}
\noindent Case 2: The last two moves $(1a)(3b)$ of $w(1a)(3b)$ can be expressed as in Fig.~\ref{Case2(1a)(3b)}.  Let $\partial x$ be the boundary of $3$-gon $x$ and $\partial y$ be the boundary of $1$-gon $y$, as in Fig.~\ref{Case2(1a)(3b)}.  
\begin{figure}
\includegraphics[width=6cm]{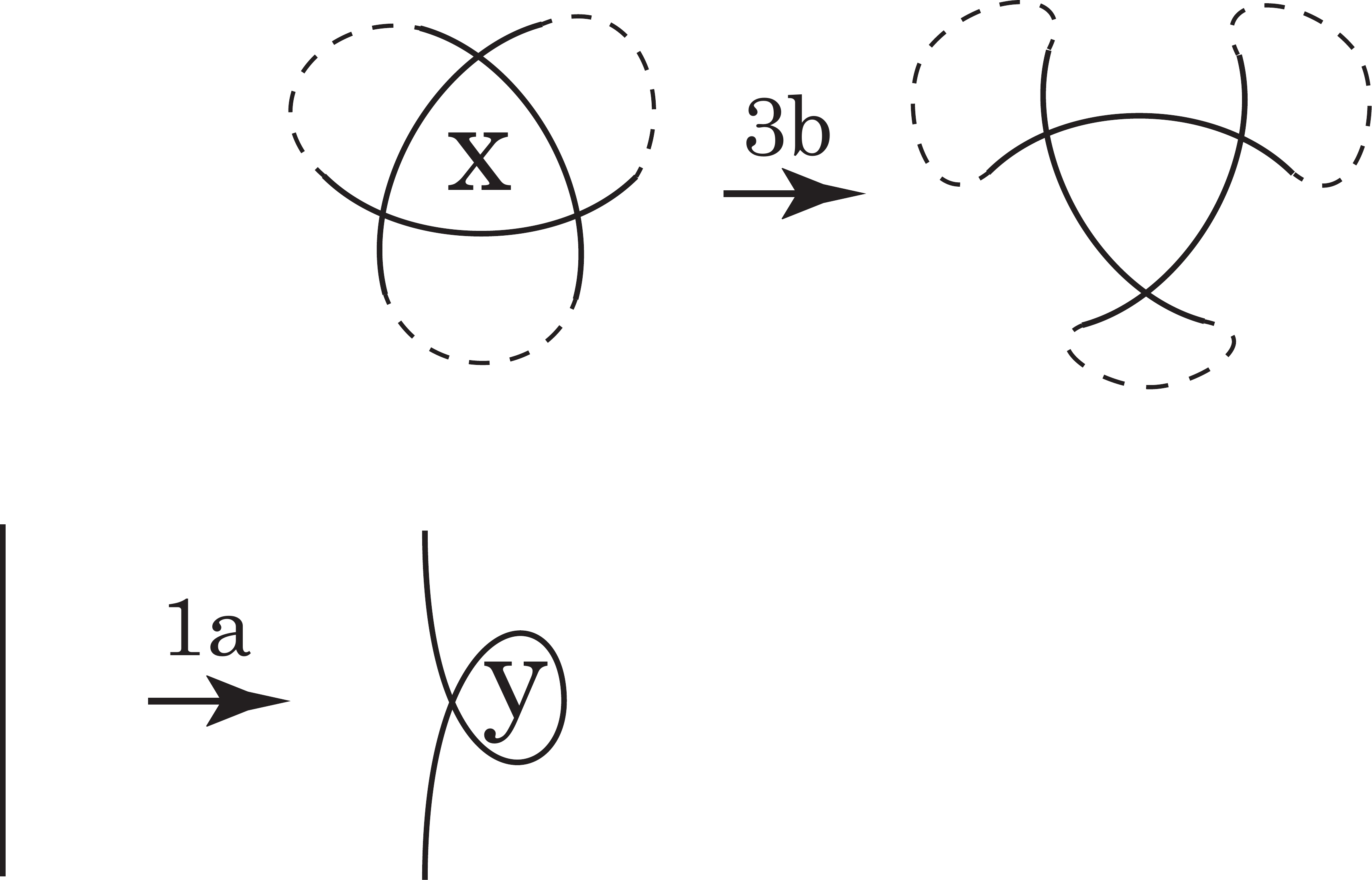}
\caption{The last two moves $(1a)(3b)$ of $w(1a)(3b)$ of Case 2.}\label{Case2(1a)(3b)}
\end{figure}
\begin{enumerate}[(i)]
\item \label{2i} Consider the case $\partial x \cap \partial y$ $\neq$ $\emptyset$.  In fact, this case does not occur.  
\item \label{2ii} If $\partial x \cap \partial y$ $=$ $\emptyset$, we have $w(1a)(3b)$ $=$ $w(1a)(3b)(1b)(1a)$ $=$ $w(3b)(1a)$ by Fig.~\ref{Case2-ii}.  
\begin{figure}[htbp]
\includegraphics[width=8cm]{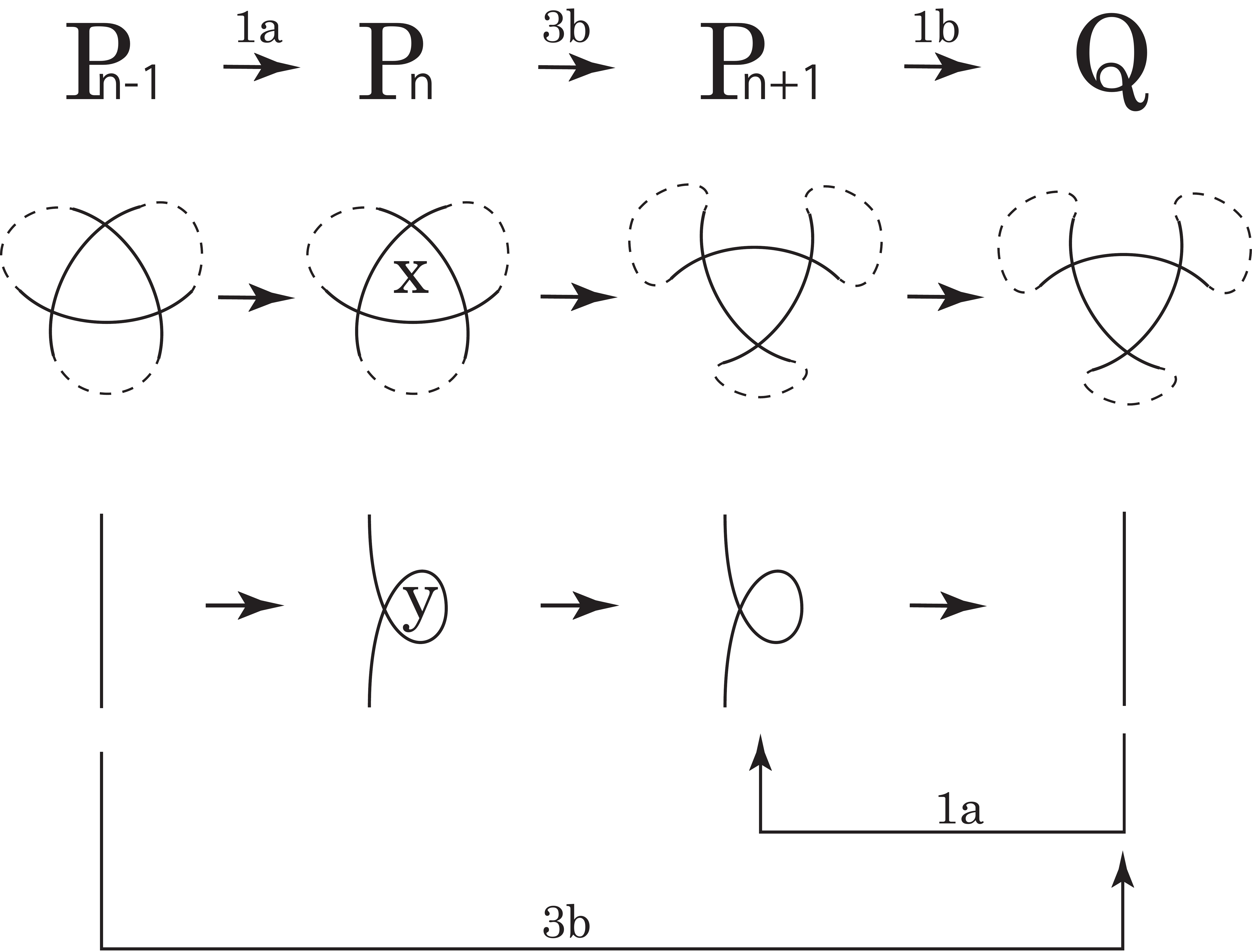}
\caption{Case 2-(\ref{2ii}).  The figure shows $w(1a)(3b)$ $=$ $w(3b)(1a)$.}\label{Case2-ii}
\end{figure}
\end{enumerate}
\noindent Case 3: The last two moves $(3a)(1b)$ are expressed as in Fig.~\ref{Case3(3a)(1b)}.  Let $\partial x$ be the boundary of $1$-gon $x$ and $\partial y$ be the boundary of $3$-gon $y$, as shown in Fig.~\ref{Case3(3a)(1b)}.  
\begin{figure}[htbp]
\includegraphics[width=8cm]{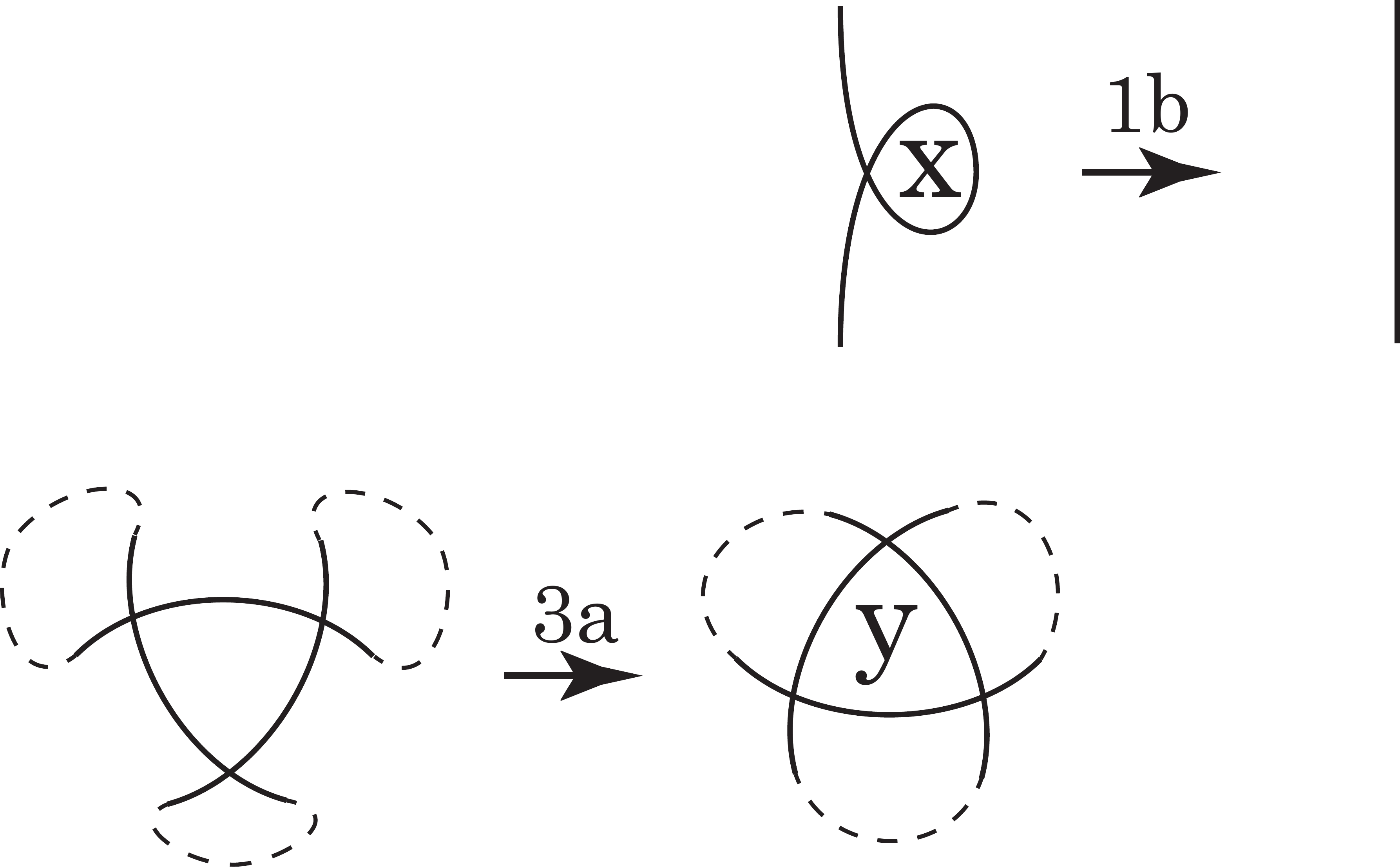}
\caption{The last two moves $(3a)(1b)$ of $w(3a)(1b)$ of Case 3.}\label{Case3(3a)(1b)}
\end{figure}
\begin{enumerate}[(i)]
\item If we consider the case $\partial x \cap \partial y$ $\neq$ $\emptyset$, there is no possibility of realizing the case.  
\item \label{3ii} If $\partial x \cap \partial y$ $=$ $\emptyset$, $w(3a)(1b)$ $=$ $w(3a)(1b)(3b)(3a)$ $=$ $w(3b)(1a)$, as shown in Fig.~\ref{Case3-ii}.  
\begin{figure}[htbp]
\includegraphics[width=8cm]{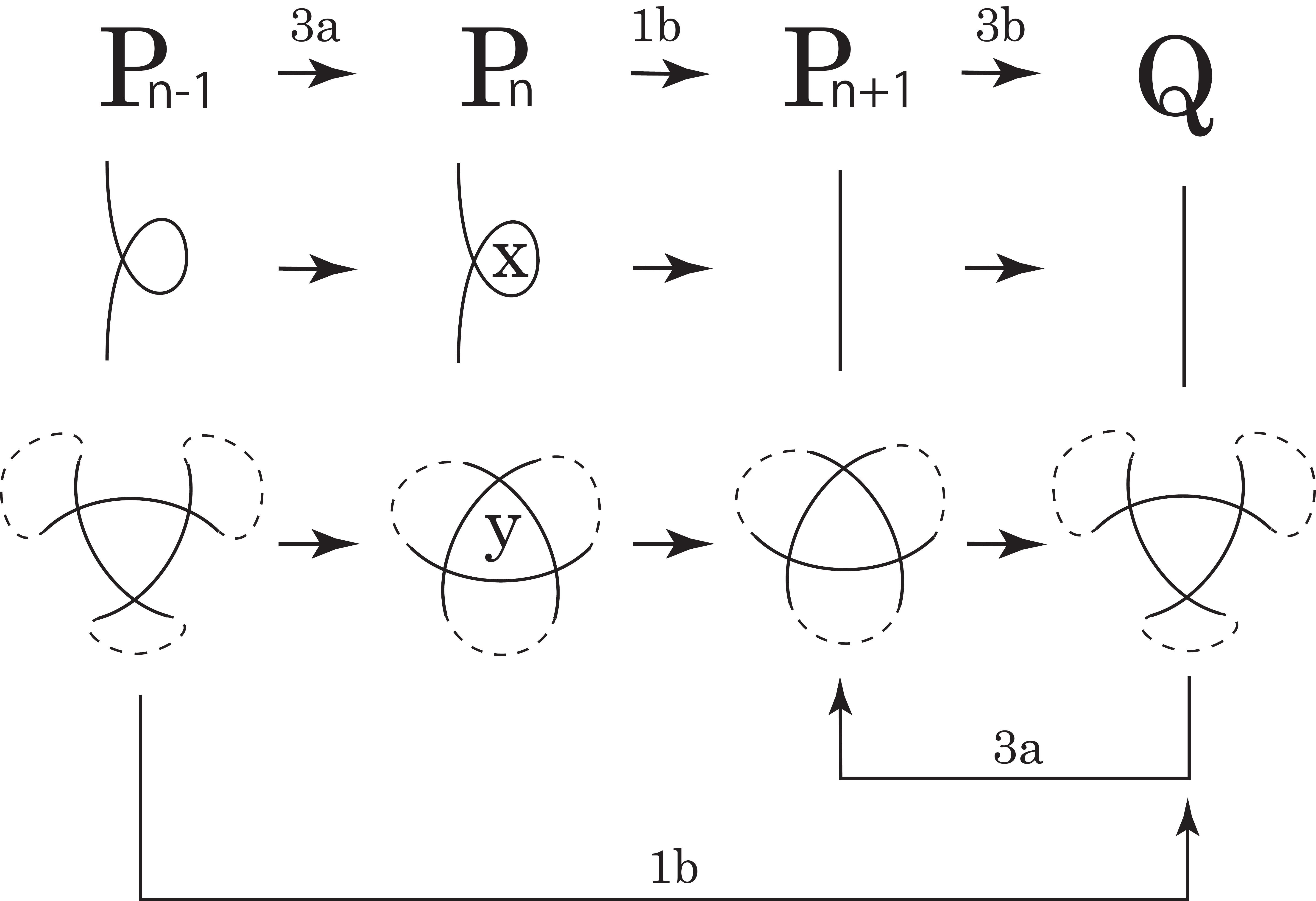}
\caption{Case 3-(\ref{3ii}).  The figure shows $w(3a)(1b)$ $=$ $w(1b)(3a)$.}\label{Case3-ii}
\end{figure}
\end{enumerate}
\noindent Case 4: The last two moves $(3a)(3b)$ of $w(3a)(3b)$ are illustrated as in Fig.~\ref{Case4(3a)(3b)}.  Let $\partial x$ be the boundary of $3$-gon $x$ and $\partial y$ be the boundary of $3$-gon $y$, as shown in Fig.~\ref{Case4(3a)(3b)}.  
\begin{figure}[htbp]
\includegraphics[width=8cm]{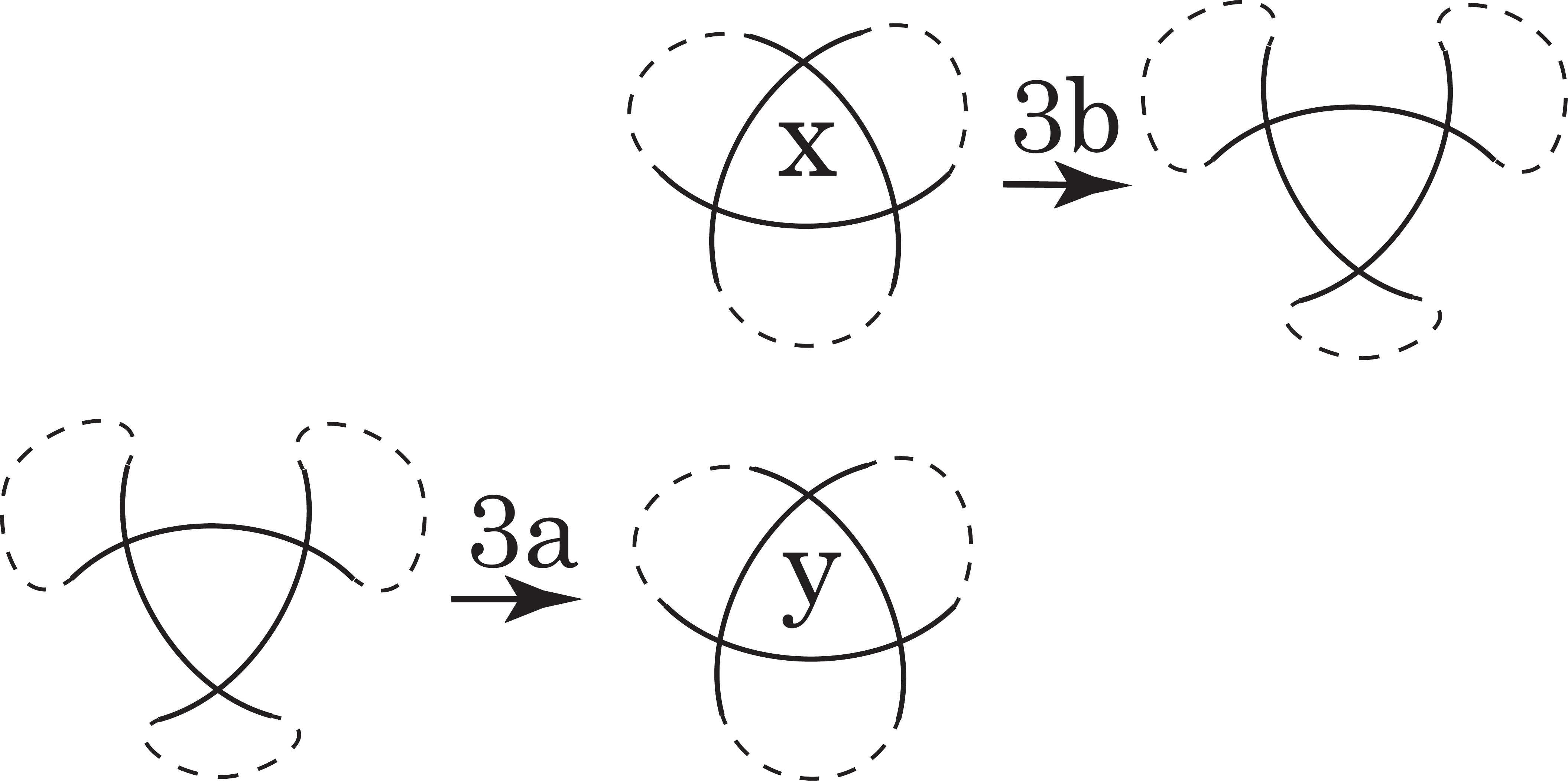}
\caption{The last two moves $(3a)(3b)$ of $w(3a)(3b)$ of Case 4.}\label{Case4(3a)(3b)}
\end{figure}
\begin{enumerate}[(i)]
\item \label{4i} If $\partial x \cap \partial y$ $\neq$ $\emptyset$, then there are three cases of the pair $x$ and $y$, as shown in Fig.~\ref{Case4-i}.  
\begin{figure}[htbp]
\includegraphics[width=7cm]{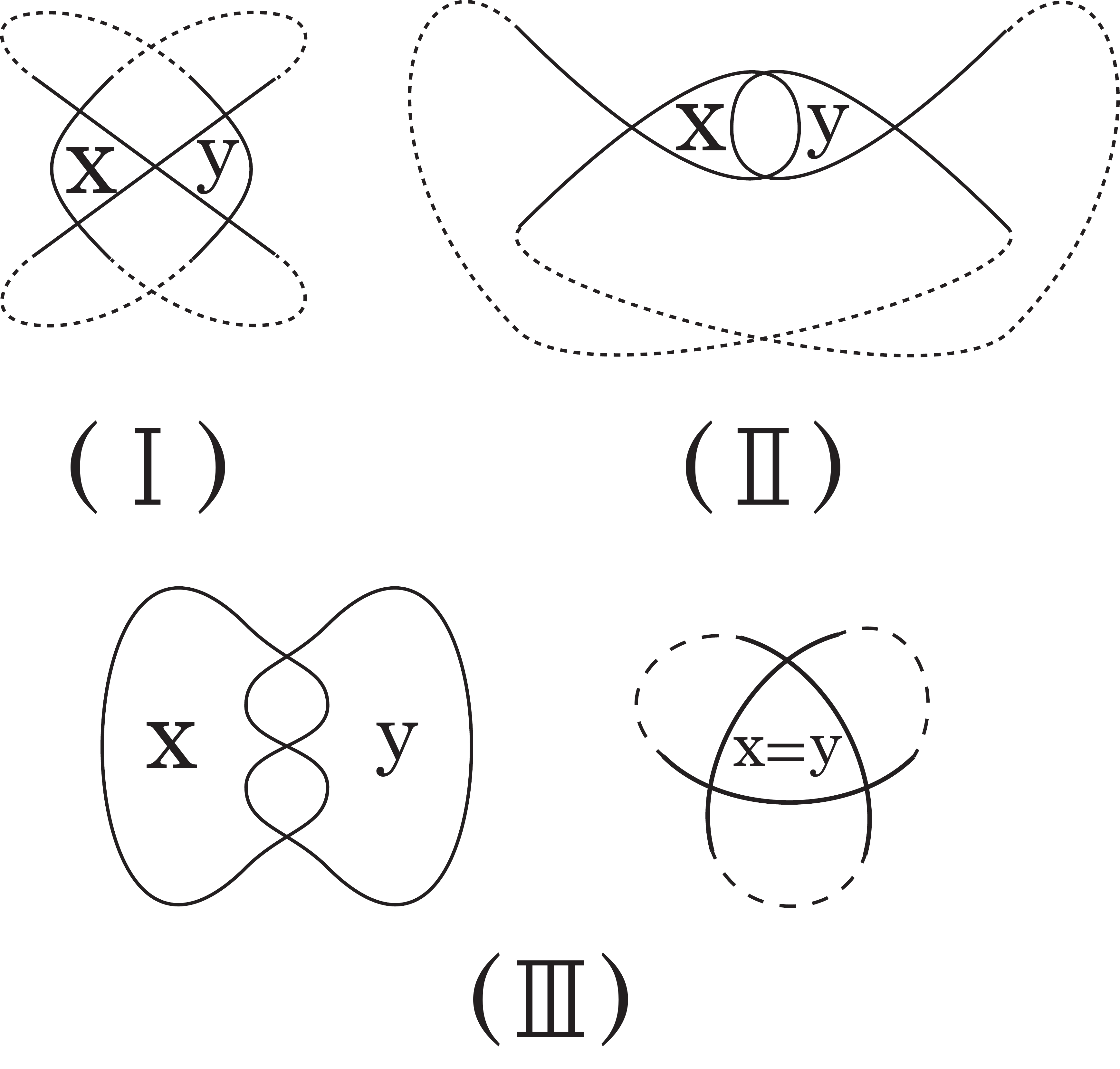}
\caption{Case 4-(\ref{4i}).  Case (I) $\partial x \cap \partial y$ $=$ $\{ {\text{one~vertex}} \}$, (I\!I) $\partial x \cap \partial y$ $=$ $\{ {\text{two~vertices}} \}$, and (I\!I\!I) $\partial x \cap \partial y$ $\supseteq$ $\{ \text{three~vertices} \}$.  }\label{Case4-i}
\end{figure}
The former two cases (I) and (I\!I) in Fig.~\ref{Case4-i} do not appear if the starting diagram of $w(3a)(3b)$ is the trivial knot projection, as shown in Lemma \ref{Lem(3a)(3b)}.  In the last case (I\!I\!I), we have $w(3a)(3b)$ $=$ $w$, by Fig.~\ref{Case4-i(c)}.  
\begin{figure}[htbp]
\includegraphics[width=7cm]{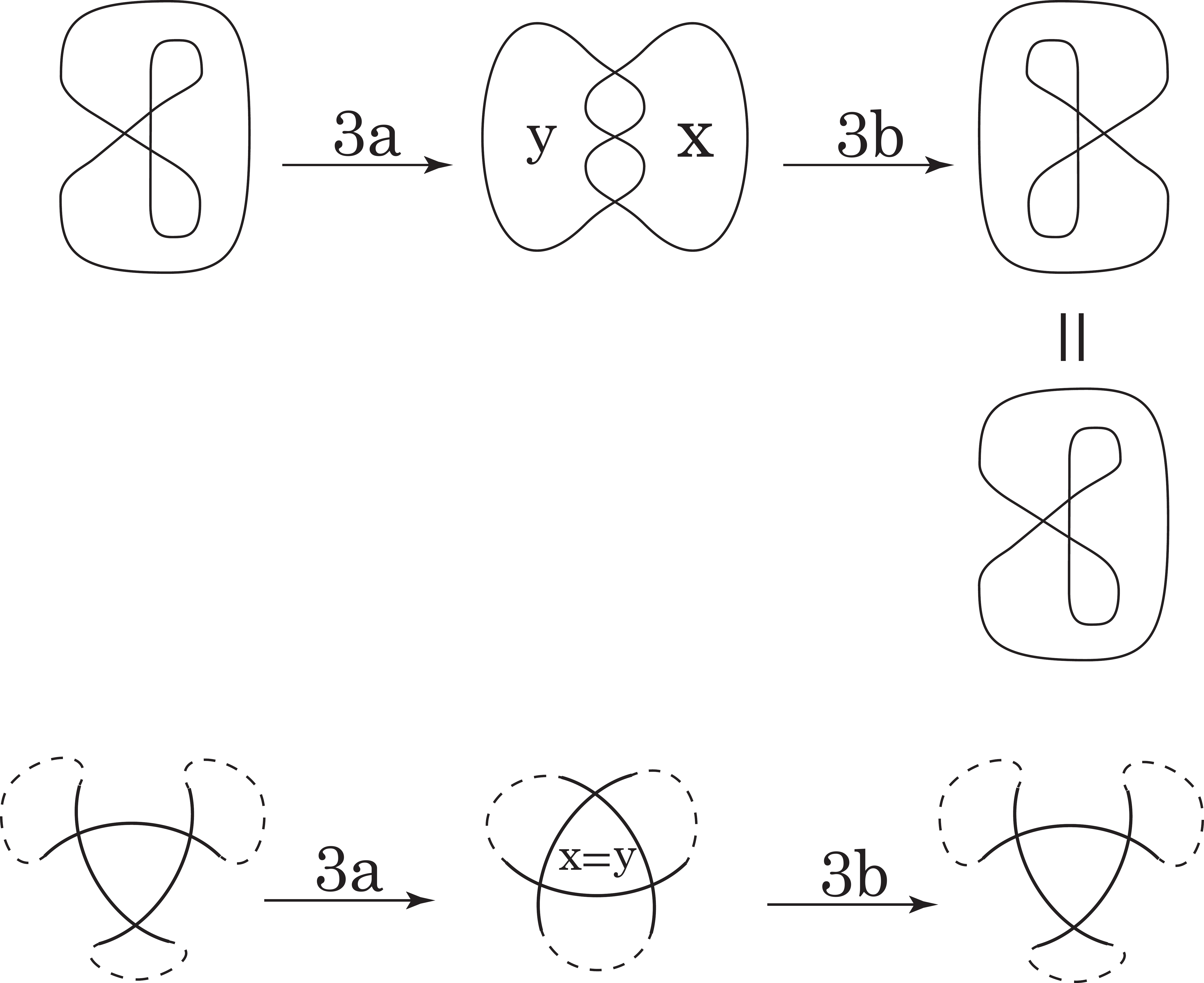}
\caption{Case 4-(\ref{4i})-(I\!I\!I).  $w(3a)(3b)$ $=$ $w$.  }\label{Case4-i(c)}
\end{figure}
\item \label{4ii} If $\partial x \cap \partial y$ $=$ $\emptyset$, Fig.~\ref{Case4-ii} shows $w(3a)(3b)$ $=$ $w(3a)(3b)(3b)(3a)$ $=$ $w(3b)(3a)$.  
\begin{figure}
\includegraphics[width=8cm]{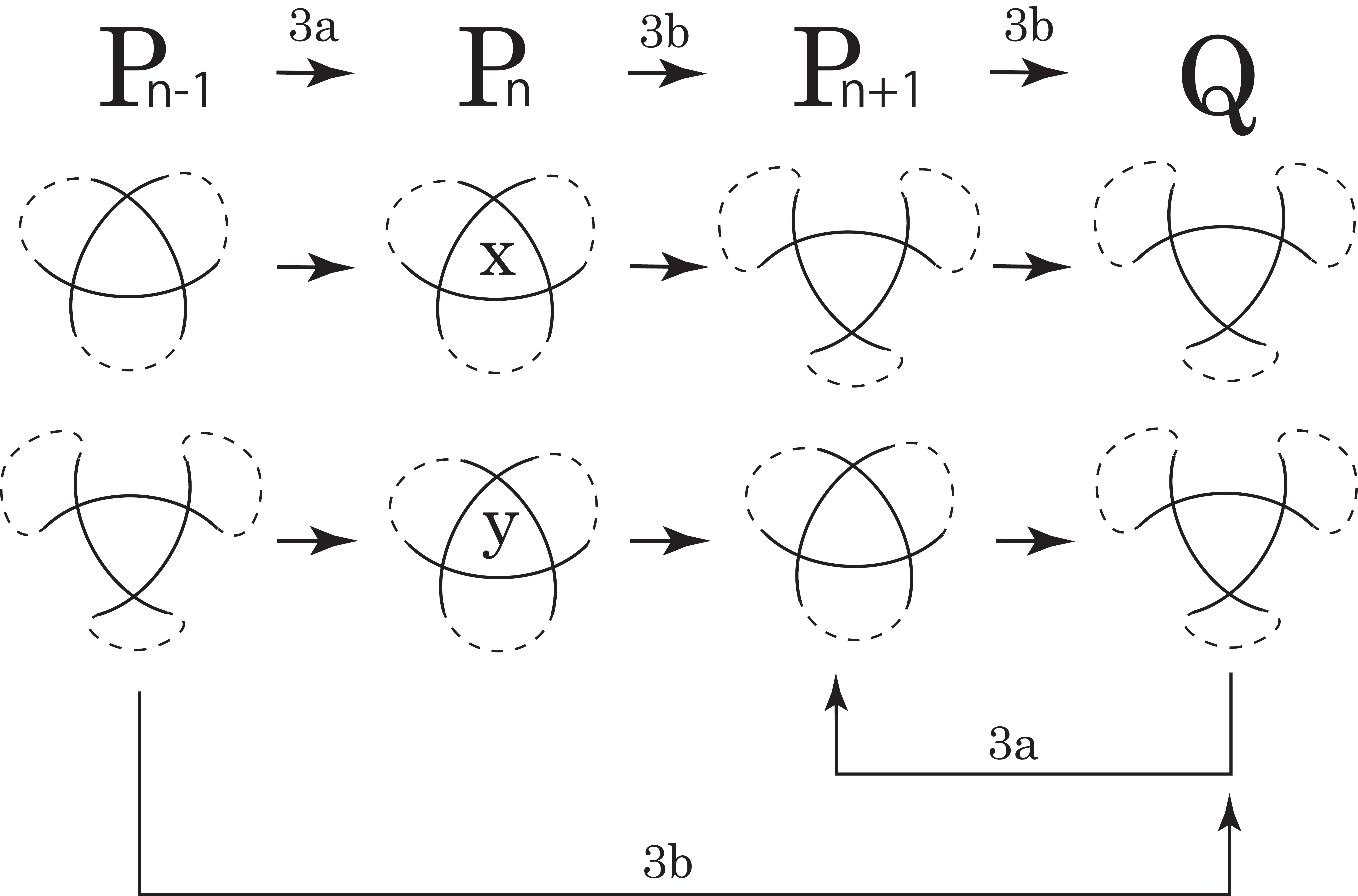}
\caption{Case 4-(\ref{4ii}).  The figure shows $w(3a)(3b)$ $=$ $w(3b)(3a)$.  }\label{Case4-ii}
\end{figure}
\end{enumerate}
\begin{lemma}\label{H_invariant}
Let $P$ be a knot projection.  Define the map $H : \{{\text{knot projections}}\} \to \{0, 1\}$ by setting $H(P)$ $=$ $1$ $($or $0$$)$ if and only if $C\!D_{P}$ contains $($or does not contain$)$ the sub-chord diagram \begin{picture}(10,10)
\put(5,3){\circle{10}}
\qbezier(0,3)(5,3)(10,3)
\qbezier(3,7)(3,3)(3,-1.5)
\qbezier(7,7)(7,3)(7,-1.5)
\end{picture}.  The map $H(P)$ is invariant under strong $(1, 3)$ homotopy.  
\end{lemma}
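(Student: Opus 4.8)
The plan is to check that $H$ is preserved by the two generators of strong $(1,3)$ homotopy, the first homotopy move and the strong third homotopy move. It helps to reformulate: since the displayed three-chord diagram is, up to rotation and reflection, the only chord diagram in which one chord crosses the other two while those two are disjoint, $H(P)=1$ says exactly that the intersection graph $G(P)$ of $C\!D_P$ (vertices: chords; edges: crossing pairs) contains an induced path on three vertices, equivalently that $G(P)$ is not a disjoint union of cliques.

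The first homotopy move adds or deletes a chord crossing nothing else, i.e.\ an isolated vertex of $G(P)$; this changes no other adjacency and no such vertex can lie on an induced three-vertex path, so $H$ is unchanged. For the strong third homotopy move I would argue from Fig.~\ref{proof_chord_strong}. Let $P_3,P_4$ be the two projections it relates, with active chords $a,b,c$ in $C\!D_{P_3}$ and $d,e,f$ in $C\!D_{P_4}$, and let $x_i\leftrightarrow y_i$ be the correspondence of the remaining (``exterior'') chords used in the proof of Theorem~\ref{strong13invariant}. Because the move is supported in a disk and every exterior chord has both endpoints on the complementary arcs of the curve, which are unchanged, two exterior chords cross on one side iff the corresponding two cross on the other, and each exterior chord crosses the same sub-collection of $\{a,b,c\}$ on both sides; moreover one reads from the figure that the active chords sit so that any exterior chord crosses an \emph{even} number of them --- zero or two --- while the triple itself contributes no crossing on one side and all three on the other, in accordance with the $\pm 3$ of Theorem~\ref{strong13invariant}. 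Hence the move alters $G(P)$ only inside $\{a,b,c\}=\{d,e,f\}$, toggling the subgraph there between three isolated vertices and a triangle, keeping all other adjacencies, with each exterior vertex joined to $0$ or $2$ of the triple. Finish by cases. If no exterior vertex meets the triple, the triple is a clique (empty on one side, a triangle on the other), so whether $G$ contains an induced three-vertex path is unaffected. If some exterior vertex $z$ is joined to two of the triple, say $b$ and $c$ (hence not to $a$), then $\{b,z,c\}$ induces a three-vertex path on the ``empty'' side and $\{a,b,z\}$ induces one on the ``triangle'' side, so $H=1$ on both sides. Reading this backwards gives the reverse implication, so $H(P_3)=H(P_4)$.

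The step I expect to be the main obstacle is the claim that in a strong third homotopy move no exterior chord crosses exactly one (or exactly three) of the active chords --- i.e.\ that in Fig.~\ref{proof_chord_strong} the dotted arcs are exactly the ``outermost'' complementary arcs, with none of them lying between two consecutive crossings along a single active strand. This is where the local structure of the strong move is really used, and (through realizability of Gauss codes) it is what rules out the configurations that would otherwise let the toggle above create or destroy an induced three-vertex path. With that established the graph-theoretic case check is routine; the only further care is to note, as above, that the pattern of exterior adjacencies and the ``$0$ or $2$'' property coincide on the two sides because the move leaves the complementary arcs in place.
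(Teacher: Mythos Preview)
Your argument is correct. The reformulation of $H(P)=1$ as ``the intersection graph of $C\!D_P$ contains an induced path on three vertices'' is valid, and the key geometric input you isolate---that for the strong third move each exterior chord meets $0$ or $2$ of the active chords, with the same pattern on both sides---is exactly right: in the Gauss circle the three exterior (dotted) arcs alternate with the three strand arcs, and each strand arc carries one endpoint of each of two active chords, so an exterior chord with endpoints in distinct exterior arcs separates one strand arc from the other two and hence crosses exactly two active chords. Your two-case finish (no exterior vertex meets the triple; some exterior vertex meets two of the triple) then cleanly shows $H(P_3)=H(P_4)$.

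Compared with the paper, you are doing more work, not different work. The paper's proof of this lemma is essentially a one-line appeal to Fig.~\ref{proof_chord_strong} (``$C\!D_{P_3}$ contains $H$ chords if and only if $C\!D_{P_4}$ does''), leaving the reader to carry out precisely the verification you have written down. Your intersection-graph language and the explicit ``$0$ or $2$'' observation make transparent what the paper's figure-based inference asserts; in particular, you correctly identify that this parity fact is where the \emph{strong} (as opposed to weak) third move is used. So the approach is the same in outline---check invariance under each generator via the chord-diagram picture---but your write-up supplies the case analysis the paper leaves implicit.
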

\noindent {\bf{Proof of Lemma \ref{H_invariant}.}}  The sub-chord diagram \begin{picture}(10,10)
\put(5,3){\circle{10}}
\qbezier(0,3)(5,3)(10,3)
\qbezier(3,7)(3,3)(3,-1.5)
\qbezier(7,7)(7,3)(7,-1.5)
\end{picture} is called an $H$ {\it{chord}}.  First, it is easy to see the application of any first homotopy move does not create or dissolve the $H$ chords.  Second, from Fig.~\ref{proof_chord_strong}, we infer that $C\!D_{P_3}$ contains $H$ chords if and only if $C\!D_{P_4}$ contains $H$ chords.  

\noindent({\bf{End of Proof of Lemma \ref{H_invariant}.}})
\begin{lemma}\label{Lem(3a)(3b)}
Assume that $P$ is a knot projection that results from the application of the local moves $(1a)$, $(1b)$, $(3a)$, and $(3b)$ to the trivial knot projection \begin{picture}(10,15) \put(5,2){\circle{8}} \end{picture}.  Then, $P$ can be neither $({\mathrm{I}})$ nor $(\mathrm{I\!I})$ of Fig.~\ref{Case4-i}.  
\end{lemma}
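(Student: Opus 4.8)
\noindent{\bf Proof proposal.} The plan is to rule out $(\mathrm{I})$ and $(\mathrm{I\!I})$ by means of the invariant $H$ of Lemma~\ref{H_invariant}. The chord diagram of the trivial knot projection is empty, hence contains no $H$ chord, so $H(\text{trivial})=0$; since each of the local moves $(1a)$, $(1b)$, $(3a)$, $(3b)$ is a first homotopy move or the strong third homotopy move (or its inverse), Lemma~\ref{H_invariant} forces $H(P)=0$, i.e.\ $C\!D_P$ contains no $H$ chord. So it suffices to prove the complementary statement: each of the two configurations $(\mathrm{I})$ and $(\mathrm{I\!I})$ of Fig.~\ref{Case4-i} has an $H$ chord in its chord diagram.

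For this I would write out the chord diagram of the relevant local part of the curve in each configuration. In both pictures the faces $x$ and $y$ are \emph{coherently} oriented $3$-gons --- this is exactly what the strong third homotopy move $(3a)$ and its inverse $(3b)$ produce --- so the coherence condition pins down, up to reflecting the circle, which of the associated chords cross one another. In $(\mathrm{I})$ the $3$-gons $x$ and $y$ share one vertex, so $x\cup y$ contributes five double points whose cyclic order along the underlying circle is determined, up to finitely many symmetric choices, by this incidence together with the coherence of $x$ and $y$; a direct inspection of the resulting five chords then produces a chord that is crossed by two further chords which do not cross each other, i.e.\ an $H$ chord. Configuration $(\mathrm{I\!I})$, where $x$ and $y$ share two vertices and $x\cup y$ has four double points, is treated in the same way. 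Since the remaining chords of $C\!D_P$ only enlarge the diagram and cannot destroy the sub-chord diagram just exhibited, $C\!D_P$ contains an $H$ chord, contradicting $H(P)=0$. Hence $P$ is neither $(\mathrm{I})$ nor $(\mathrm{I\!I})$.

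In order, the steps are: (1) observe $H(\text{trivial})=0$ and apply Lemma~\ref{H_invariant} to obtain $H(P)=0$; (2) fix notation for the chords coming from the two coherent $3$-gons and record how coherence constrains their mutual crossings; (3) enumerate the admissible cyclic arrangements of the five endpoints in $(\mathrm{I})$ and locate an $H$ chord in each; (4) do the same for the four endpoints in $(\mathrm{I\!I})$; (5) combine (3)--(4) with (1) to reach a contradiction. The main obstacle is steps (3)--(4): one must verify that coherence genuinely forces the asserted crossing pattern and that \emph{every} admissible arrangement --- there are only finitely many, coming from the placement of the shared vertex or vertices, the reflections, and the placement of the unshared vertices --- contains an $H$ chord. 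This is a finite but somewhat delicate verification, most transparently carried out by drawing the chord diagrams explicitly.
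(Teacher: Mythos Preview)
Your proposal is correct and follows essentially the same route as the paper: show $H(\text{trivial})=0$, invoke Lemma~\ref{H_invariant} to get $H(P)=0$, and derive a contradiction by exhibiting an $H$ chord in the chord diagram of each of the configurations $(\mathrm{I})$ and $(\mathrm{I\!I})$. The only difference is that the paper does not need your enumeration in steps~(3)--(4): the dotted arcs in Fig.~\ref{Case4-i} already fix the connections of the branches, so the local chord diagram is uniquely determined in each case and a single picture (Figs.~\ref{Case4-i-I} and~\ref{Case4-i-II}) suffices to locate the $H$ chord.
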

\noindent {\bf{Proof of Lemma \ref{Lem(3a)(3b)}.}}
To illustrate the claim of Lemma \ref{Lem(3a)(3b)}, we check the claim for each case.  
\begin{enumerate}[(I)]
\item \label{case4iI} For the knot projection $P$ on the left-hand side of Fig.~\ref{Case4-i-I}, we obtain $H(P)$ $=$ $1$ using the right-hand side of Fig.~\ref{Case4-i-I}.  
\begin{figure}[htbp]
\includegraphics[width=6cm]{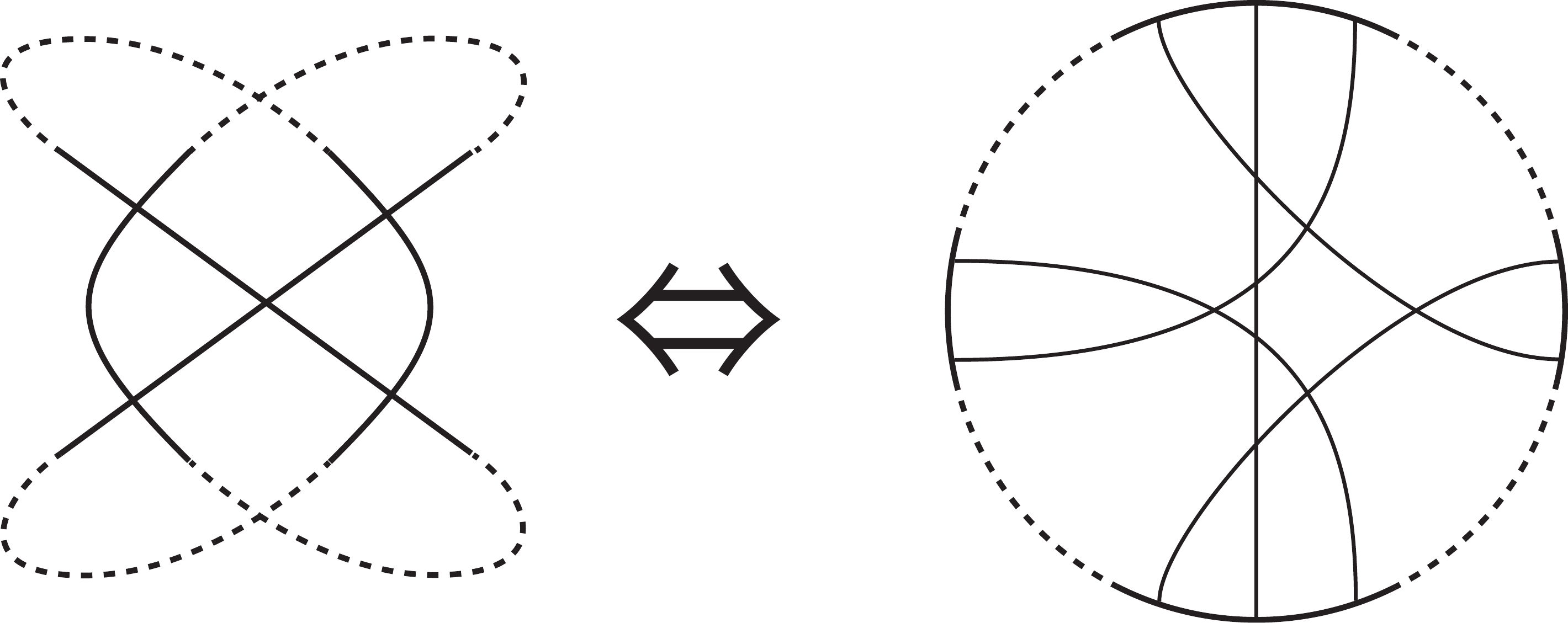}
\caption{Case 4-(\ref{4i})-(\ref{case4iI}).}\label{Case4-i-I}
\end{figure}
\item \label{case4iII} For a knot projection $P$ at the left of Fig.~\ref{Case4-i-II}, we have $H(P)$ $=$ $1$ using Fig.~\ref{Case4-i-II}.  
\begin{figure}[htbp]
\includegraphics[width=6cm]{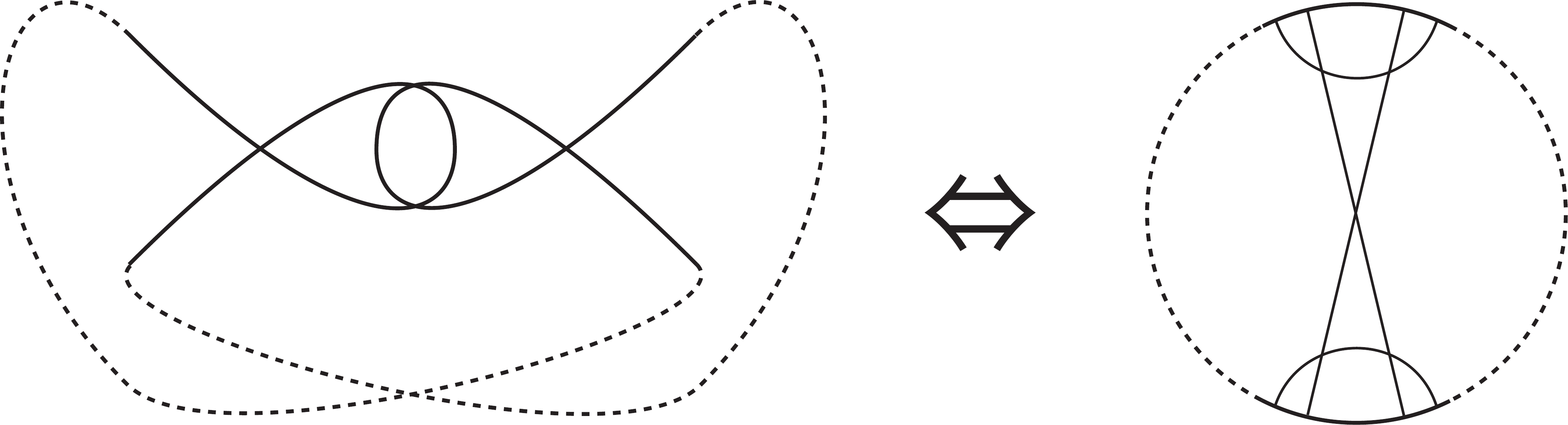}
\caption{Case 4-(\ref{4i})-(\ref{case4iII}).}\label{Case4-i-II}
\end{figure}
\end{enumerate}
For the trivial knot projection $\begin{picture}(10,15) \put(5,3){\circle{8}} \end{picture}$, we have the formula $H(\begin{picture}(10,15) \put(5,3){\circle{8}} \end{picture})$ $=$ $0$.   This formula completes the proof.  

\noindent({ \bf{End of Proof of Lemma \ref{Lem(3a)(3b)}.} })

Lemma \ref{Lem(3a)(3b)} completes the proof of Proposition \ref{lemStrong}.  

\noindent ({ \bf{End of Proof of Proposition \ref{lemStrong}.} })

Finally, we will prove the claim of Theorem \ref{strongTrivial_thm}.  Below, we denote the connected sum of the trivial knot projection \begin{picture}(10,15) \put(5,2){\circle{8}} \end{picture}, the knot projection that appears similar to $\infty$, and the trefoil knot projection as $\sharp \{U, \infty, T \}$.  This is shown by induction on the length of a sequence consisting of $(3a)$ and $(1a)$.  When $n$ $=$ $1$, the knot projection is nothing but the one that appears similar to $\infty$.  Further, when the length is equal to $n$, we assume that the abovementioned claim holds.  Therefore, by this assumption of induction, the knot projection $P_n$ belongs to $\sharp \{U, \infty, T\}$.  Consider $P_{n+1}$ and let $w$ be a sequence of moves (length: $n$) consisting of $(1a)$ and $(3a)$.  If $P_{n+1}$ is obtained by $w(1a)$, it is easy to see that $P_{n+1}$ belongs to $\sharp \{U, \infty, T \}$.  If $P_{n+1}$ is obtained by $w(3a)$, the last $(3a)$ can be presented as shown in Fig.~\ref{33}.  
\begin{figure}[htbp]
\includegraphics[width=6cm]{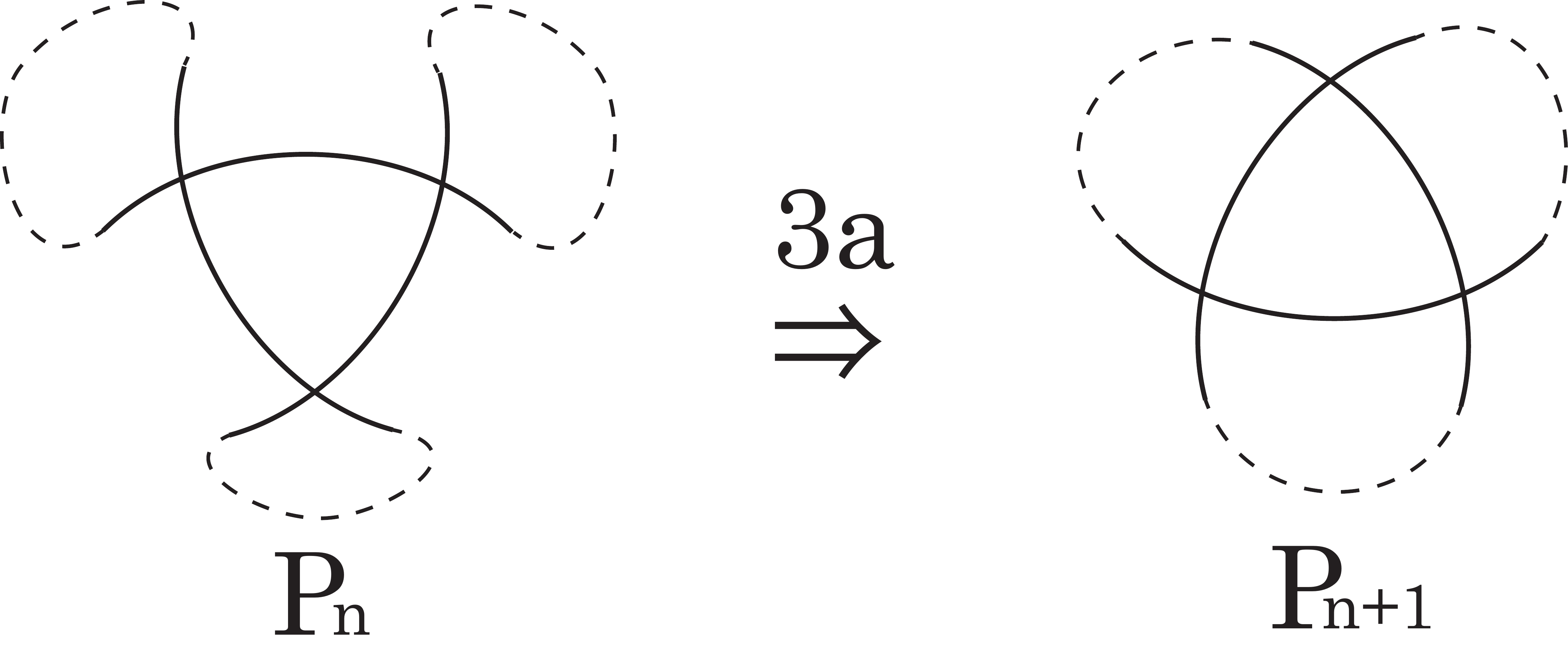}
\caption{$P_n$ $\stackrel{(3a)}{\to}$ $P_{n+1}$.}\label{33}
\end{figure}
Here, by using Lemma \ref{H_invariant}, we obtain $H(P_{i})$ $=$ $0$ for every $i \in \{1, 2, 3, \dots, n, n+1\}$.  Then, as shown on the right-hand side of Fig.~\ref{34}, there is no chord connecting between the two dotted arcs on $C\!D_{P_{n+1}}$.  In other words, there is no double point that consists of two dotted arcs of $P_{n+1}$ as shown on the left-hand side of Fig.~\ref{34}.  
\begin{figure}[htbp]
\includegraphics[width=6cm]{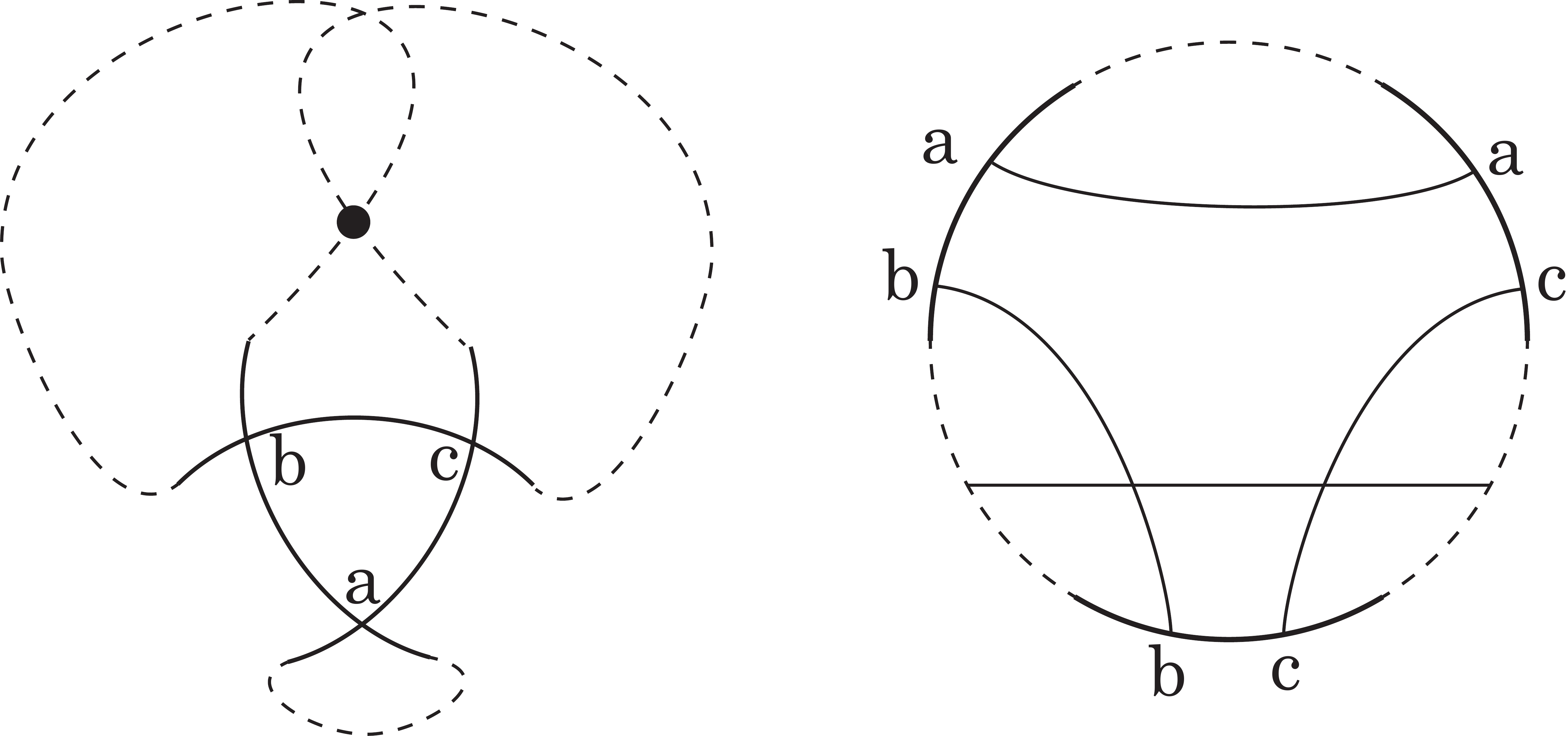}
\caption{Double point consisting of two dotted arcs and the chord connecting the two dotted arcs corresponding to the double point.}\label{34}
\end{figure}

By the assumption of induction, we conclude that $P_{n}$ belongs to $\sharp \{U, \infty, T\}$ and has no double point that contains the two dotted arcs shown on the left-hand side of Fig.~\ref{33}.  After applying $(3a)$ to $P_n$ and maintaining the property that $C\!D_{P_{n+1}}$ has no chord connecting the two dotted arcs on $C\!D_{P_{n+1}}$, we obtain the result $P_{n+1}$ shown on the right-hand side of Fig.~\ref{33}.  Therefore, $P_{n+1}$ belongs to $\sharp \{U, \infty, T\}$.  This completes the proof.  
\end{proof}
\begin{remark}
In the last step $P_n \stackrel{(3a)}{\to} P_{n+1}$ of the above proof of Theorem \ref{strongTrivial_thm}, we provide another proof; this proof is as follows: By the assumption of induction, $P_n$ belongs to $\sharp \{U, \infty, T\}$.  Then, by using \cite[Theorem 3.2]{sakamoto&taniyama}, we determine that $C\!D_{P_n}$ does not contain $H$ chords.  Then, there is no double point that contains two dotted arcs (Fig.~\ref{34}).  Then, from Fig.~\ref{33}, we infer that $P_{n+1}$ does not contain $H$ chords.  Therefore, $P_{n+1}$ belongs to $\sharp \{U, \infty, T\}$.  
\end{remark}
\begin{remark}
For the knot projection $P_{HY}$ defined by the left image of Fig.~\ref{35a}, $H(P_{HY})$ $=$ $1$ (alternatively, $X(P_{HY})$ $\equiv$ $2$ (${\operatorname{mod}}~3$) using $C\!D_{P_{HY}}$ as shown in the right image of Fig.~\ref{35a}).  Then, $P_{HY}$ is not equivalent to the trivial knot projection \begin{picture}(10,15) \put(5,3){\circle{8}} \end{picture} under strong (1, 3) homotopy.  Hagge and Yazinski \cite{hagge&yazinski} claim that $P_{HY}$ cannot be equivalent to the trivial knot projection under (1, 3) homotopy without the use of any numerical invariants.  
\begin{figure}[htbp]
\includegraphics[width=10cm]{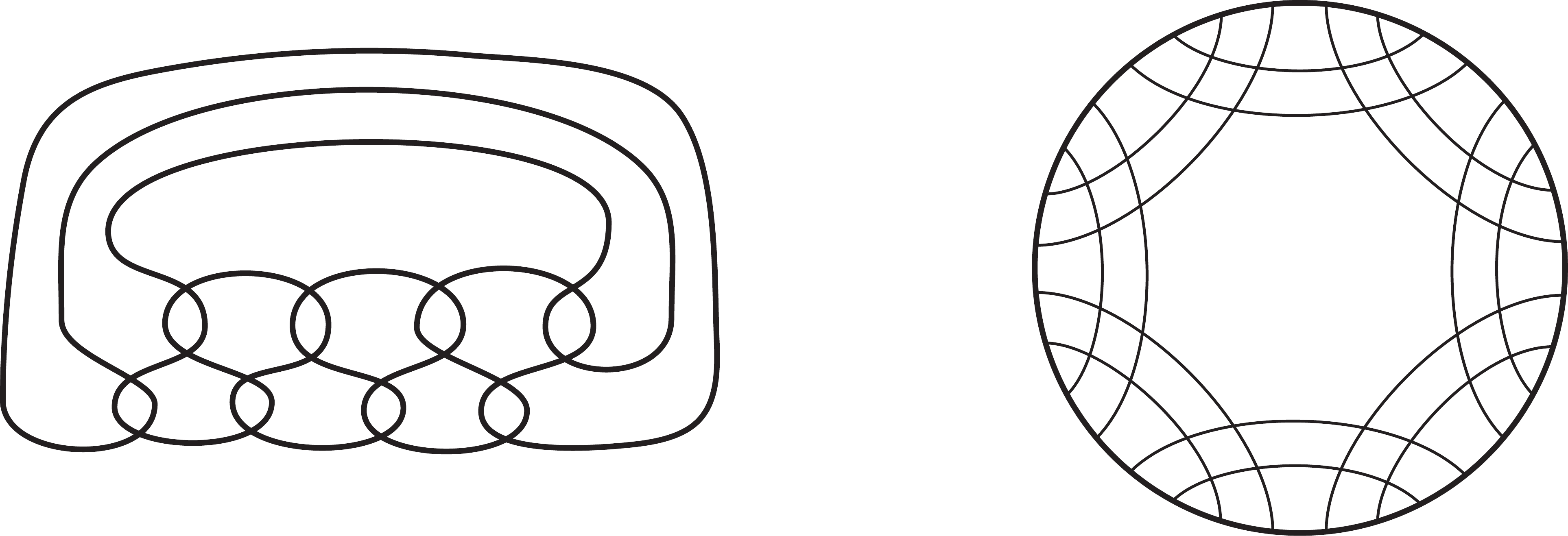}
\caption{Knot projection $P_{HY}$ and $C\!D_{P_{HY}}$.}\label{35a}
\end{figure}
\end{remark}
\begin{remark}
Proposition \ref{lemStrong} provides a finite sequence obtained by $(1a)$ and $(3a)$ from the trivial knot projection \begin{picture}(10,15) \put(5,3){\circle{8}} \end{picture} to a knot projection $P$.  From Theorem \ref{strongTrivial_thm}, the knot projection $P$ belongs to $\sharp \{U, \infty, T\}$.  In fact, Proposition \ref{(3a)times} provides the relation between the sub-chord and the number of $(3a)$ in the sequence.  
\end{remark}
\begin{proposition}\label{(3a)times}
Let $P$ be a knot projection that exists in a finite sequence obtained by $(1a)$ and $(3a)$ from the trivial knot projection \begin{picture}(10,15) \put(5,3){\circle{8}} \end{picture} to $P$.   
The number of $(3a)$ is equal to the number of the sub-chords corresponding to the trefoil projection defined by Fig.~\ref{trefoilProjection}.  Moreover, the number of $(3a)$ is equal to $(J_{S}^{+}(P) + 2 St_{S}(P))/2$ where $J_{S}^{+}(P)$ and $St_{S}(P)$ are Arnold invariants of spherical curves defined by \cite[Page 993, Sec.~2.4]{polyak}.  
\end{proposition}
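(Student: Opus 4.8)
The plan is to prove the two assertions in turn, building on Theorem~\ref{strongTrivial_thm} and the chord-diagram bookkeeping already set up. For the first assertion I would induct on the length of the sequence $P_1\to\cdots\to P_k=P$ starting from the trivial knot projection $P_1$. By the proof of Theorem~\ref{strongTrivial_thm}, every $P_i$ is a connected sum of copies of $U$, $\infty$, and $T$; a move $(1a)$ attaches an $\infty$ summand, which contributes a single isolated chord to $C\!D_{P_i}$, whereas the inductive step $P_i\stackrel{(3a)}{\to}P_{i+1}$ analysed there attaches exactly one new $T$ summand, i.e.\ adds exactly one triple of pairwise crossing chords. Hence the number of $(3a)$ moves in the sequence equals the number of $T$ summands of $P$. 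On the other hand, in a chord diagram of a connected sum the chords coming from distinct summands never cross, so any family of pairwise crossing chords lies inside a single summand; a $U$ summand carries no chord, an $\infty$ summand a single isolated chord, and a $T$ summand exactly three pairwise crossing chords, i.e.\ precisely one sub-chord diagram of trefoil type. Therefore the number of trefoil sub-chords of $C\!D_P$ equals the number of $T$ summands of $P$, which equals the number of $(3a)$ moves (and also $X(P)/3$, consistently with Theorem~\ref{strong13invariant}); in particular this number depends only on $P$.

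For the second assertion, write $P$ as a connected sum of $b$ copies of $\infty$ and $c$ copies of $T$, so that $c$ is the number of $(3a)$ moves by the first part. Since $J^{+}_{S}$ and $St_{S}$ vanish on the trivial knot projection and are additive under connected sum of spherical curves, we get $J^{+}_{S}(P)+2St_{S}(P)=b(J^{+}_{S}(\infty)+2St_{S}(\infty))+c(J^{+}_{S}(T)+2St_{S}(T))$, so it suffices to verify the two elementary identities $J^{+}_{S}(\infty)+2St_{S}(\infty)=0$ and $J^{+}_{S}(T)+2St_{S}(T)=2$ directly from the definitions in \cite{polyak}; this yields $(J^{+}_{S}(P)+2St_{S}(P))/2=c$. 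Alternatively one may argue move by move along the sequence: $St_{S}$ is unchanged by $(1a)$ and changes by a definite $+1$ under a strong third homotopy move $(3a)$, while $J^{+}_{S}$ is unchanged by $(3a)$ (which is realised by a generic homotopy crossing only the triple-point stratum) and by $(1a)$ (the relevant curl crossing an inverse self-tangency), so $(J^{+}_{S}+2St_{S})/2$ starts at $0$ on the trivial knot projection and gains exactly $1$ at each $(3a)$.

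The routine part is the first assertion, which is just chord counting on top of Theorem~\ref{strongTrivial_thm}. The delicate part is the input from \cite{polyak} for the second assertion: one must pin down the sign and normalization conventions for $J^{+}_{S}$ and $St_{S}$ — in particular the additivity of both invariants under spherical connected sum and, for the move-by-move variant, the sign of the $St_{S}$-jump across the \emph{strong} (rather than the weak) triple-point perestroika — so that the elementary computations on $\infty$ and on the trefoil projection of Fig.~\ref{trefoilProjection} indeed come out to be $0$ and $2$ respectively.
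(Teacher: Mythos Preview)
Your argument for the first assertion is essentially the paper's: both induct on the length of the $(1a)/(3a)$ sequence and use that $H(P_i)=0$ forces the triangle created by each $(3a)$ to sit as an isolated trefoil block, so that the trefoil sub-chord count goes up by exactly one. You phrase this through the connected-sum decomposition of Theorem~\ref{strongTrivial_thm}, while the paper counts the sub-chord $(\ast)$ of Fig.~\ref{36} directly from Fig.~\ref{33}; the content is the same.

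For the second assertion the paper takes precisely your ``alternative'' move-by-move route, but instead of reasoning from the geometric definition of $J^{+}_{S}$ and $St_{S}$ it invokes Polyak's Gauss-diagram formula \cite[Formula~(3)]{polyak} and reads off that $(J^{+}_{S}+2St_{S})/2$ is unchanged by $(1a)$ and increases by $1$ under $(3a)$. This bypasses the sign and normalization issues you flag. One small correction to your sketch: the move $(1a)$ is not realized by crossing an inverse self-tangency stratum --- it is a cusp (Reidemeister~I) perestroika, which lies outside the regular-homotopy discriminant where $J^{\pm}$ and $St$ are defined via jumps. That is exactly why the paper appeals to Polyak's combinatorial formula rather than to the stratum-crossing picture; your additivity-plus-values-on-$\infty$-and-$T$ approach is a legitimate alternative, but it too ultimately rests on that formula (or on an independent check of additivity for the spherical invariants), so it does not really avoid the delicate input you identified.
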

\begin{proof}
The proof of the former part is shown by induction on the length $n$ of a sequence $P_1$ $\to$ $P_2$ $\to \dots \to$ $P_n$ consisting of $(1a)$ and $(3a)$.  We denote the sub-chord diagram (Fig.~\ref{36} (b)) corresponding to Fig.~\ref{36} (a) by ($\ast$).  
\begin{figure}[htbp]
\includegraphics[width=6cm]{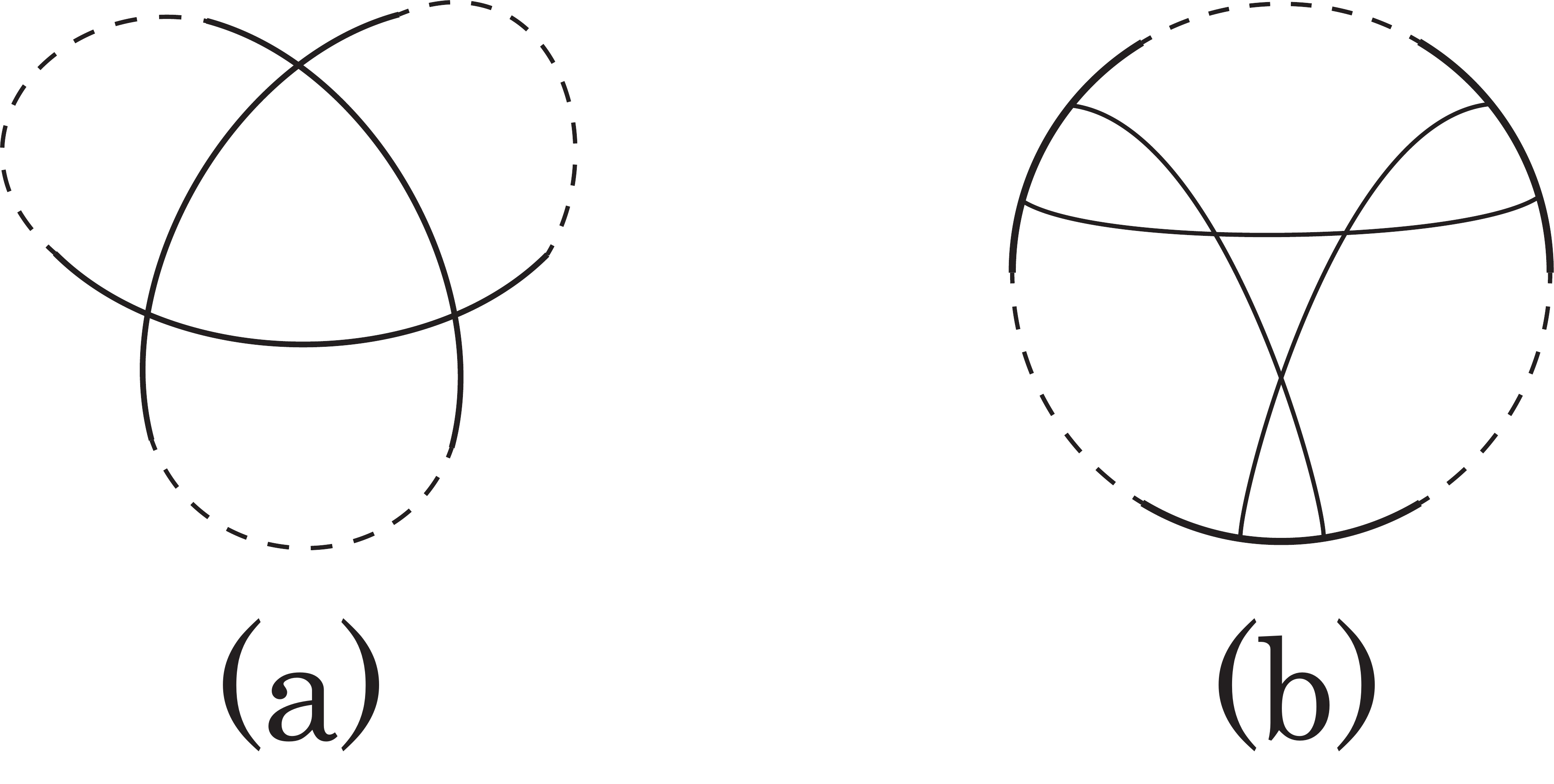}
\caption{Sub-chord diagram (b) corresponding to (a).}\label{36}
\end{figure}
In the case $n$ $=$ $1$, the only move is $(1a)$ and the number of $(3a)$ is $0$.  The number of sub-chords ($\ast$) is also $0$.  Then, in this case, the claim is true.  Now, we assume that the claim is established in case $n$.  Since the initial projection is the trivial knot projection, we obtain $H(P_i)$ $=$ $0$ for any $i$.  Every sub-chord ($\ast$) is placed on the dotted arcs shown on the left-hand side of Fig.~\ref{33}.  From the left to the right in Fig.~\ref{33}, the number of $(3a)$ and the number of sub-chords ($\ast$) increase by $1$.  By the assumption of induction, we conclude that the number of sub-chords ($\ast$) is equal to the number of $(3a)$ in the case of $P_n$.  

Next, we present the proof of the latter part.  Using formula \cite[Page 997, Formula (3)]{polyak}, $(1a)$ does not change $(J_{S}^{+} + 2 St_{S})/2$.  The $(J_{S}^{+} + 2 St_{S})/2$ increases by $1$ on applying one $(3a)$.  This completes the proof.  
\end{proof}
\begin{remark}
M. Polyak defined the Arnold invariants $J_{S}^{+}$ and $St_{S}$ for spherical curves \cite[Page 993, Sec.~2.4]{polyak}.  Readers should be careful because Theorem 1 and Corollary 1 \cite[Page 996, Theorem 1 and Page 997, Corollary 1]{polyak} have typographical errors (see \cite[Page 1217]{ito2}).  For relations between $J_{S}^{+} + 2 St_S$ and the Vassiliev knot invariant, see \cite[Sec.~6.4]{polyak} and \cite{sakamoto&taniyama}.  
\end{remark}

\section{Strong (1, 3) homotopy classes of other knot projections.}\label{strongGeneral}
In this section, we obtain the proof of Theorem \ref{otherClass_thm} via Lemma \ref{generalLemma}.  
\begin{lemma}\label{generalLemma}
Let $P_0$ be a knot projection without $1$-gons, coherent $2$-gons, and $3$-gons shown in Fig.~\ref{37}.  If a knot projection $P$ is equivalent to $P_0$ under strong $(1, 3)$ homotopy, $P$ is the connected sum of $P_0$ and the knot projections. Each knot projection is equivalent to the trivial knot projection \begin{picture}(10,15) \put(5,3){\circle{8}} \end{picture} under strong (1, 3) homotopy.  
\end{lemma}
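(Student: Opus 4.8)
The plan is to mimic the structure of the proof of Theorem \ref{strongTrivial_thm} as closely as possible, with $P_0$ playing the role formerly played by the trivial knot projection $\begin{picture}(10,15)\put(5,3){\circle{8}}\end{picture}$. First I would establish the analogue of Proposition \ref{lemStrong}: if $P$ is strong $(1,3)$ homotopy equivalent to $P_0$, then $P$ can be obtained from $P_0$ by a finite sequence of moves of type $(1a)$ and $(3a)$ only (no inverse moves $(1b)$, $(3b)$). The proof is the same word-replacement argument on sequences — push the first occurrence of an inverse move to the left using the commutation relations of Cases 1--4 — the only thing to check is that the ``bad'' configurations (I) and (I\!I) of Fig.~\ref{Case4-i} still cannot occur when the sequence starts at $P_0$ rather than at the trivial projection. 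This is exactly where the hypotheses on $P_0$ enter: Lemma \ref{Lem(3a)(3b)} used $H(\begin{picture}(10,15)\put(5,3){\circle{8}}\end{picture})=0$, and since $H$ is a strong $(1,3)$ invariant (Lemma \ref{H_invariant}), I need $H(P_0)=0$ — equivalently $C\!D_{P_0}$ contains no $H$ chord. The absence of $1$-gons, coherent $2$-gons and coherent $3$-gons in $P_0$ is precisely designed to guarantee this (an $H$ chord together with the chords it forces would produce one of the forbidden small coherent faces), so I would record that implication as a short sub-lemma and then run the Cases 1--4 argument verbatim.

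Second, I would carry out the induction on the length $n$ of an $(1a)$--$(3a)$ sequence from $P_0$ to $P$, showing at each stage that the knot projection is the connected sum of $P_0$ with copies of $\infty$ and the trefoil projection $T$; write $P_0 \,\sharp\, \{\infty, T\}$ for this set. The base case $n=0$ is $P_0$ itself. For the inductive step, a trailing $(1a)$ obviously produces a connected summand that is $\infty$ (or is absorbed), and a trailing $(3a)$ is analyzed exactly as in Fig.~\ref{33}: because $H(P_i)=0$ for all $i$ along the sequence (using $H(P_0)=0$ and invariance), there is no chord connecting the two relevant dotted arcs, so the $(3a)$ move acts in a region disjoint from $P_0$ and merely splices in a trefoil summand, keeping $P$ in $P_0\,\sharp\,\{\infty,T\}$. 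Finally, each such summand $\infty$ or $T$ is itself strong $(1,3)$ equivalent to the trivial knot projection (the trefoil projection reduces to $\infty$ by one $(3a)^{-1}$ then to the circle by $(1a)^{-1}$'s, and $\infty$ reduces by $(1a)^{-1}$), which is the statement of Lemma \ref{generalLemma}; Theorem \ref{otherClass_thm} then follows by also noting the converse — any connected sum of $P_0$ with $\infty$'s and $T$'s is manifestly obtained from $P_0$ by $(1a)$ and $(3a)$ moves, hence strong $(1,3)$ equivalent to $P_0$.

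The main obstacle I anticipate is the sub-lemma ``$P_0$ has no $1$-gon, coherent $2$-gon, or coherent $3$-gon $\Longrightarrow$ $C\!D_{P_0}$ contains no $H$ chord.'' An $H$ chord is an isolated chord $\gamma$ together with one chord $\delta$ with both endpoints on the same side of $\gamma$; one must translate this purely combinatorial Gauss-diagram condition back into a statement about faces of the spherical curve $P_0$. The isolated chord $\gamma$ bounds a $1$-gon unless something interleaves with it, and the extra chord $\delta$ sitting on one side forces, after tracing the curve, a small coherent $2$-gon or $3$-gon between the double points of $\gamma$ and $\delta$; I would make this precise by a short case analysis on how the arcs cut off by $\gamma$ and $\delta$ bound innermost faces, invoking coherence of orientation (any orientation of $P_0$ makes these faces coherently oriented, which is exactly the hypothesis). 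Once this implication is in hand, every other ingredient is a direct transcription of Section \ref{strongTrivial}, so the bulk of the write-up is bookkeeping rather than new ideas.
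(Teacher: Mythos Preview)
Your approach has a genuine gap at the crucial sub-lemma. First, you have misread the definition of an $H$ chord: it is \emph{not} ``an isolated chord $\gamma$ together with one chord $\delta$ with both endpoints on the same side of $\gamma$.'' The picture in Lemma~\ref{H_invariant} shows three chords in which one chord is crossed by two others that do not cross each other. With the correct definition, your proposed implication ``$P_0$ has no $1$-gon, coherent $2$-gon, or coherent $3$-gon $\Longrightarrow$ $H(P_0)=0$'' is false. Indeed, by the Sakamoto--Taniyama result cited in the Remark following the proof of Theorem~\ref{strongTrivial_thm}, a projection with $H(P)=0$ lies in $\sharp\{U,\infty,T\}$; but every factor of such a connected sum other than $U$ contributes a $1$-gon ($\infty$) or a coherent $3$-gon ($T$), so the only $P_0$ satisfying the hypotheses of Lemma~\ref{generalLemma} with $H(P_0)=0$ is the trivial projection itself. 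Thus your analogue of Proposition~\ref{lemStrong} cannot be established, and the word-rewriting argument of Cases~1--4 breaks down exactly at Case~4--(\ref{4i}), configurations (I) and (I\!I), for any nontrivial $P_0$.

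The paper proceeds quite differently: it does \emph{not} attempt to reduce to $(1a)$/$(3a)$ only. Instead it inducts directly on the length of an arbitrary sequence of $(1a)$, $(1b)$, $(3a)$, $(3b)$ starting from $P_0$, and shows geometrically that every move takes place in a small ``$r$-disk'' on an edge of $P_0$, away from the double points of $P_0$. The hypotheses on $P_0$ enter not via $H$, but through a case analysis on the number $m\in\{1,2,3\}$ of double points of $P_0$ lying inside the disk where the move is applied: $m=1$ would force a $1$-gon in $P_0$, $m=2$ a coherent $2$-gon, and $m=3$ a coherent $3$-gon, each contradicting the assumption. This localization is what guarantees that each move only alters a connected summand attached along an edge of $P_0$, and it is not something you can recover from $H(P_0)=0$ alone.
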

\begin{proof}
The proof is accomplished by induction on the length $n$ of a sequence $P_{0}$ $\to$ $P_1$ $\to \dots \to P_n$ consisting of $(1a)$, $(1b)$, $(3a)$, and $(3b)$.  The knot projection $P_0$ satisfies the claim since $P_0$ is the connected sum of $P_0$ and the trivial knot projection \begin{picture}(10,15) \put(5,3){\circle{8}} \end{picture}.  An example is shown in Fig.~\ref{38}.  
\begin{figure}[htbp]
\includegraphics[width=3cm]{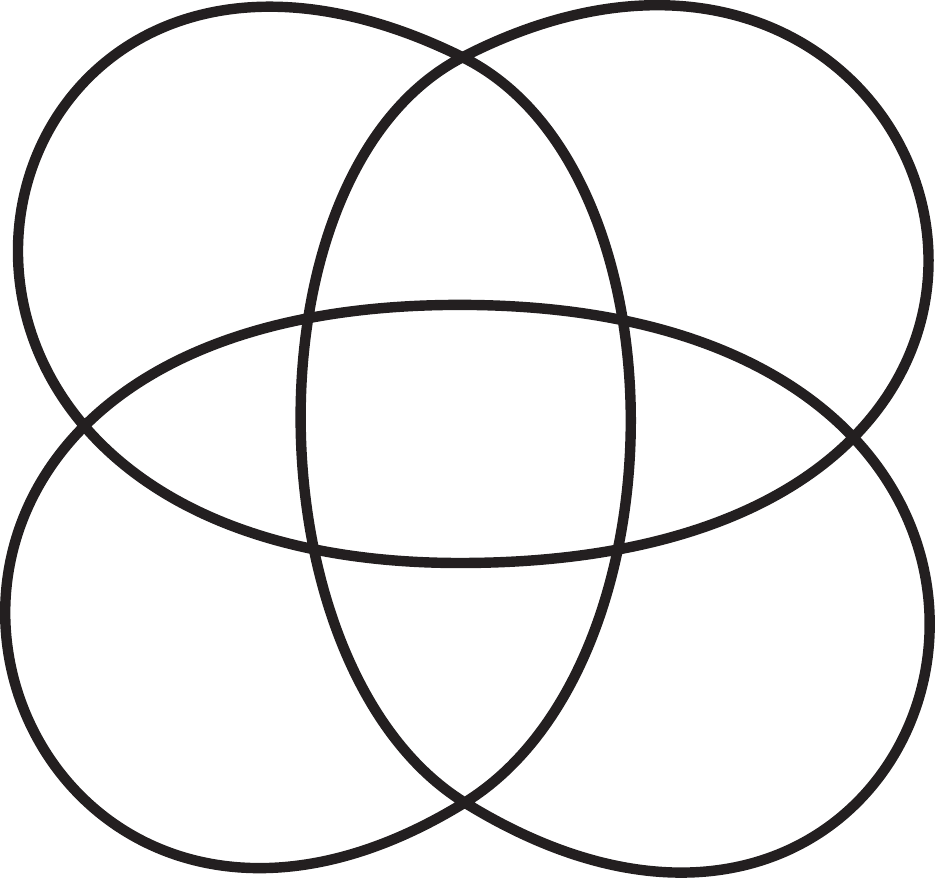}
\caption{Example of $P_0$.}\label{38}
\end{figure}
Let $P_{n+1}$ be the knot projection that we obtain after we apply $(1a)$, $(1b)$, $(3a)$, or $(3b)$ to $P_{n}$.  Assume that $P_n$ satisfies the claim and we prove that $P_{n+1}$ also satisfies the claim under this assumption.  Let $x$ be $(1a)$, $(1b)$, $(3a)$, or $(3b)$ sending $P_n$ $\to$ $P_{n+1}$.  From the definitions of $(1a)$, $(1b)$, $(3a)$, and $(3b)$ (Figs.~\ref{FlatReidemeisterMove}, \ref{triplepoint_perestroika}, and \ref{oriented_strong_weak}), each of $(1a)$, $(1b)$, $(3a)$, and $(3b)$ is a local move within a disk $x_d$ that is called $x$-{\it{disk}}.  We also consider the regular neighborhood $B$ of $P_0$ (Fig.~\ref{40} (b) for (a)) and disks, called $r$-disks, on each edge of $P_0$ apart from double points of $P_0$ (Fig.~\ref{39}(b) for (a)).  Here, we assume that the sets $x$-disk, $B$, and $r$-disks are closed sets.  

\begin{figure}[htbp]
\includegraphics[width=5cm]{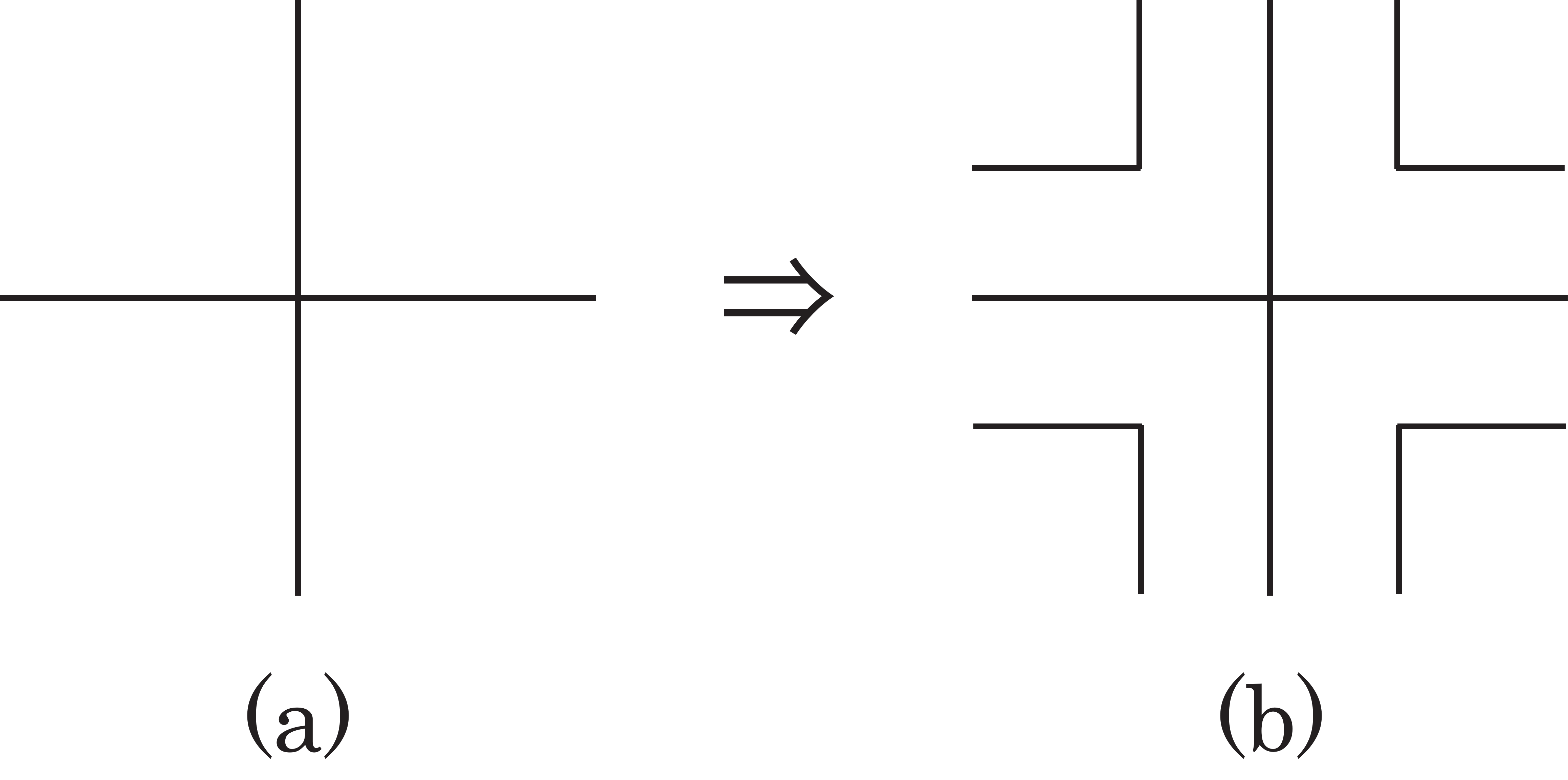}
\caption{Definition of $B$.}\label{40}
\end{figure}
\begin{figure}[htbp]
\includegraphics[width=5cm]{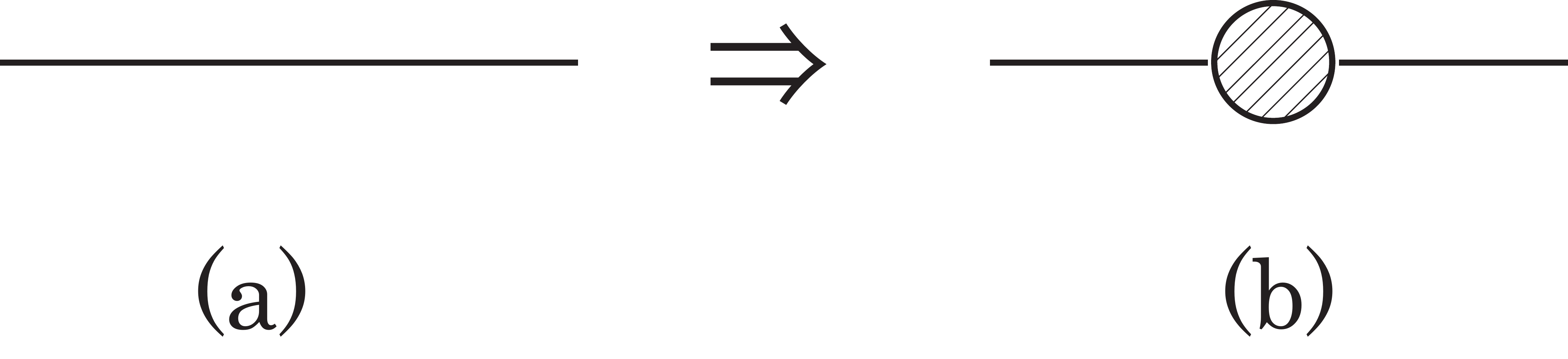}
\caption{Definition of $r$-disks.}\label{39}
\end{figure}
Fig.~\ref{45} gives an example of $B$ and $r$-disks corresponding to Fig.~\ref{38}.  
\begin{figure}[htbp]
\includegraphics[width=3cm]{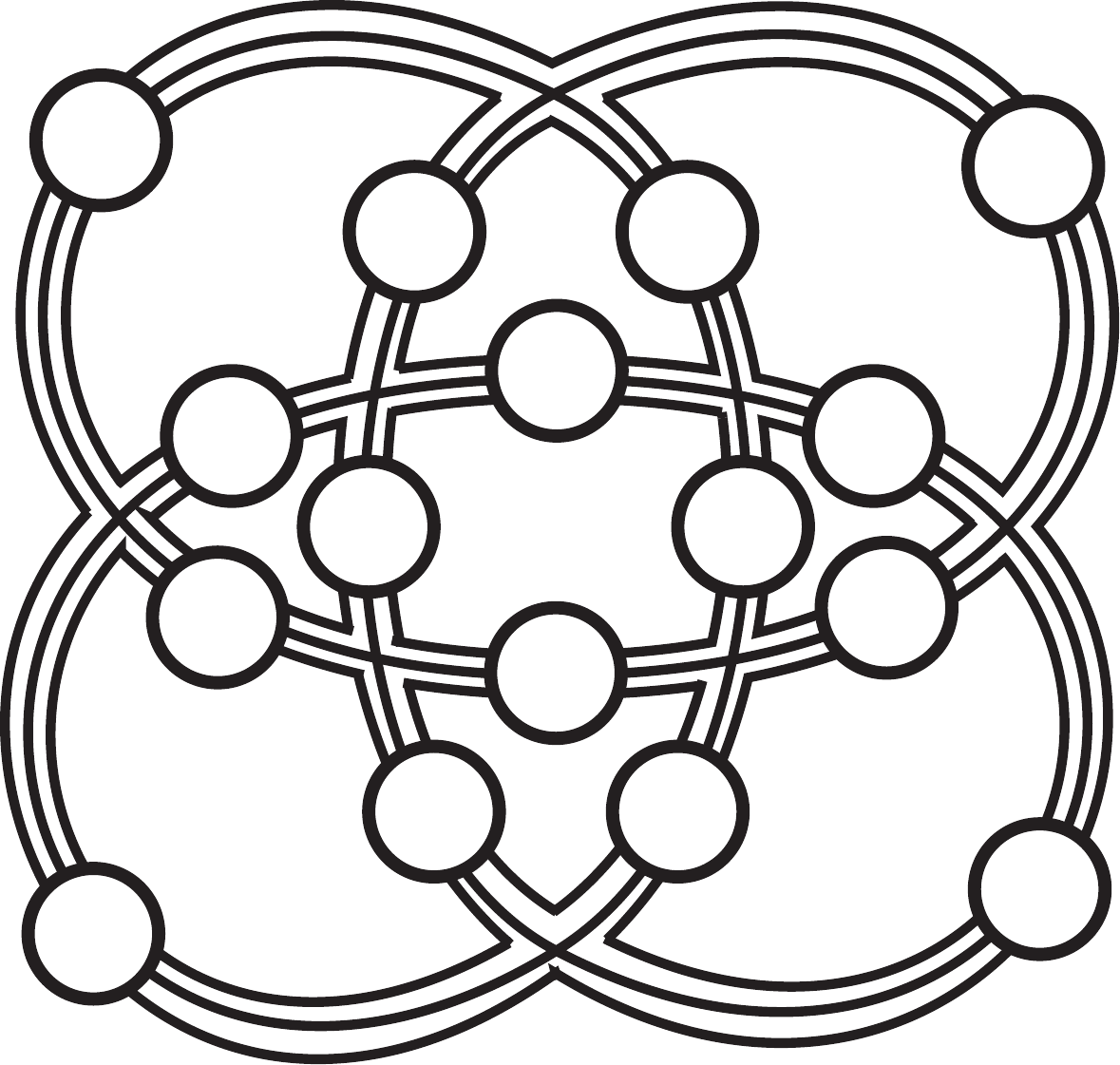}
\caption{Neighborhood $B$ and $r$-disks.}\label{45}
\end{figure}
If there exists $r$-disk $r_0$ such that $x_d \subset r_0$, then by the assumption of induction, $P_{n+1}$ satisfies the claim.  

First, we consider the $x$-disk $x_d$ that contains $m$ double points labeled as $d(P_0)$ that belong to $P_0$ ($m$ $\ge$ $1$).  
\begin{itemize}
\item Case $m$ $=$ $1$.  In this case, there are three possibilities (1), (2), and (3), as shown in Fig.~\ref{41}.  If $x_d \cap P_n$ has exactly one double point $d(P_0)$, then $P_0$ also has a double point $d(P_0)$ keeping the connection as dotted arcs as in Fig.~\ref{41} (1).  By the assumption of induction, $P_n$ is the connected sum of $P_0$, the knot projection that appears similar to $\infty$, and the trefoil projection.  Therefore, if the dotted arc has double points, $P_n$ also has these double points, which implies a contradiction.  Therefore, the dotted arc does not have double points, and thus, we have the equality as shown in Fig.~\ref{41} (1).  However, by the assumption of $P_0$, $P_0$ does not have $1$-gons.  This implies a contradiction, and therefore, the possibility (1) does not appear in $P_n$.  

Next, let us consider possibility (2).  By using the symmetry of the triangle of the strong third move, it is sufficient to consider the left-hand side figure of Fig.~\ref{41} (2) without loss of generality.  In this case, the other two double points of the triangle are not labeled as $d(p_0)$, and the corresponding $P_0$ has a local figure as shown in Fig.~\ref{41} (2).  Similarly to (1), by the assumption of induction, we have the equality in Fig.~\ref{41} (2).  Using the assumption for $P_0$ again, the possibility (2) does not occur.  From discussions similar to (1) and (2), the possibility (3) also does not occur.  Therefore, there is no possibility of case $m$ $=$ $1$.  
\begin{figure}[htbp]
\includegraphics[width=10cm]{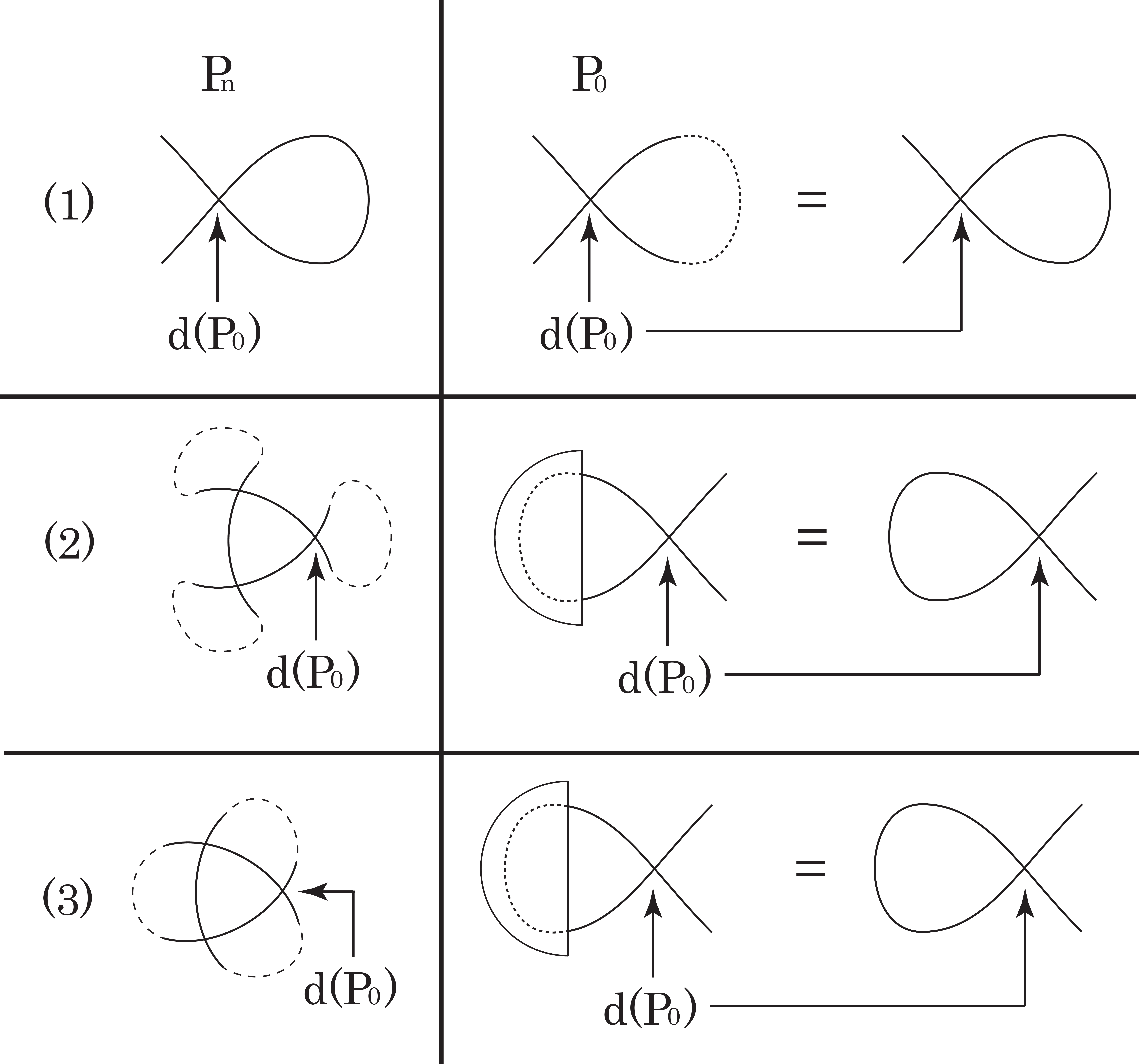}
\caption{Case $m$ $=$ $1$.}\label{41}
\end{figure} 
\item Case $m$ $=$ $2$. 
\begin{figure}[htbp]
\begin{center}
\includegraphics[width=10cm]{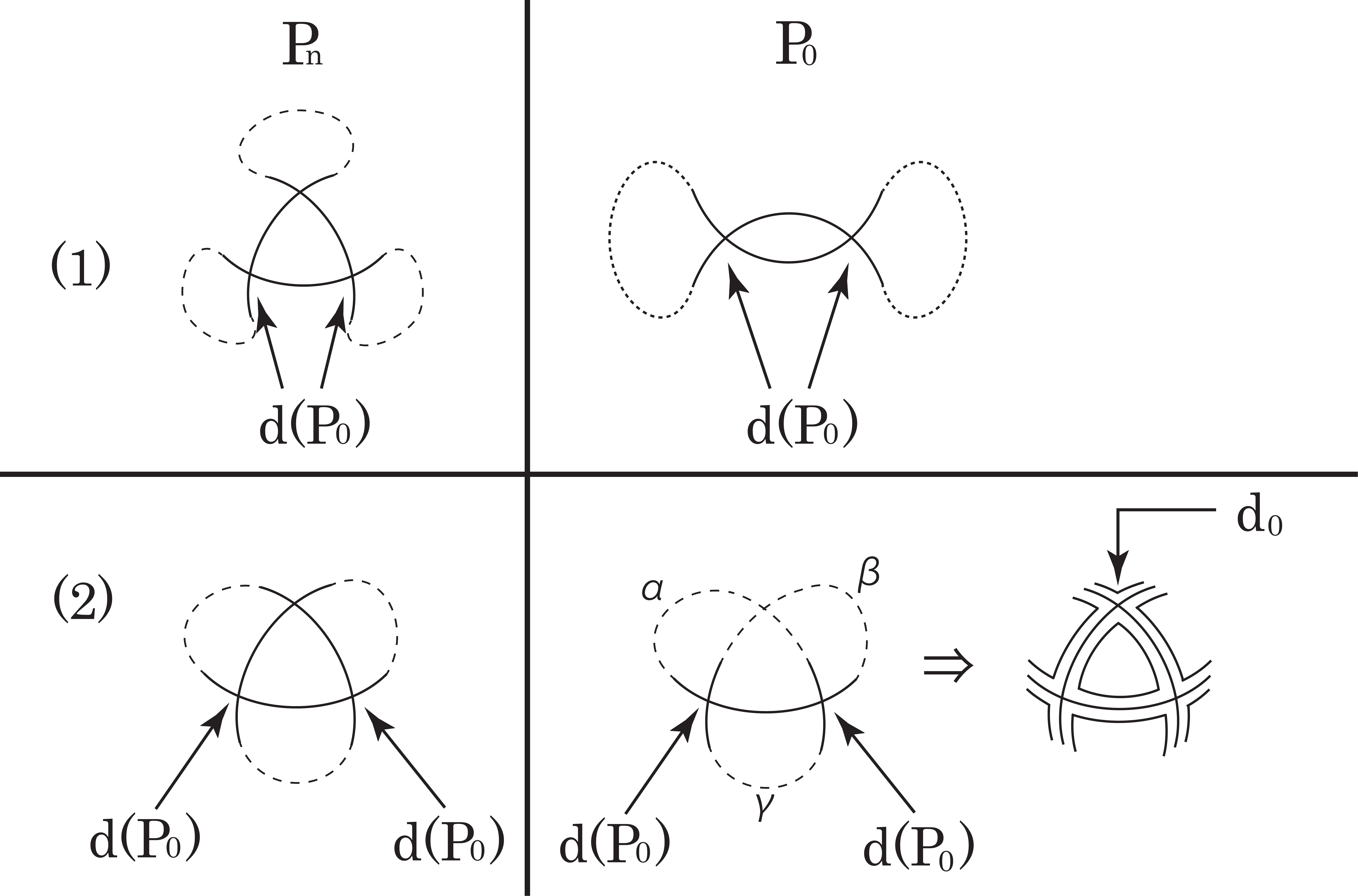}
\end{center}
\caption{Case $m$ $=$ $2$.}\label{42}
\end{figure}
In this case, there are two possibilities as shown in Fig.~\ref{42}.  If $x_d \cap P_n$ has exactly two double points labeled as $d(P_0)$ in Fig.~\ref{42} (1), the other point of the triangle is in a $r$-disk, and we determine $P_0$ locally as the middle figure of Fig.~\ref{42} (1) using the assumption of induction.  However, by the assumption of $P_0$, $P_0$ does not have coherent $2$-gons, which implies a contradiction.  Therefore, the possibility (1) does not occur.  

Next, we consider case (2) by observing Fig.~\ref{42} (2).  If the knot projection $P_n$ appears as the figure on the left-hand side, the corresponding $P_0$ keeps two double points labeled as $d(P_0)$ shown in the middle figure.  Here, we denote three dotted arcs by $\alpha$, $\beta$, and $\gamma$ as shown on the right-hand side of Fig.~\ref{42} (2).  There exists the double point $d_0$ in $P_0$ such that the arcs $\alpha$ intersects $\beta$ at $d_0$ as shown in the figure on the right-hand side.  The reason is described as follows: When we start from the right of $d(P_0)$ to $\alpha$, first, if $\alpha$ intersects other dotted arcs $\alpha$ or $\gamma$ in $P_0$, the figure $P_n$ on the left-hand side has to be modified, which implies a contradiction.  Then, $P_0$ with $B$ is locally drawn as the left-hand side figure of Fig.~\ref{42} (2).  By the assumption of induction, $P_n$ still has the local figure on the right-hand side of Fig.~\ref{42} (2).  Then, three double points appear in the left-hand side figure of Fig.~\ref{42} (2) and it contains the corresponding double point $d_0$ labeled as $d(P_0)$, which implies a contradiction.  Therefore, there is no possibility of $m$ $=$ $2$.  
\item Case $m$ $=$ $3$.
\begin{figure}[htbp]
\begin{center}
\includegraphics[width=7cm]{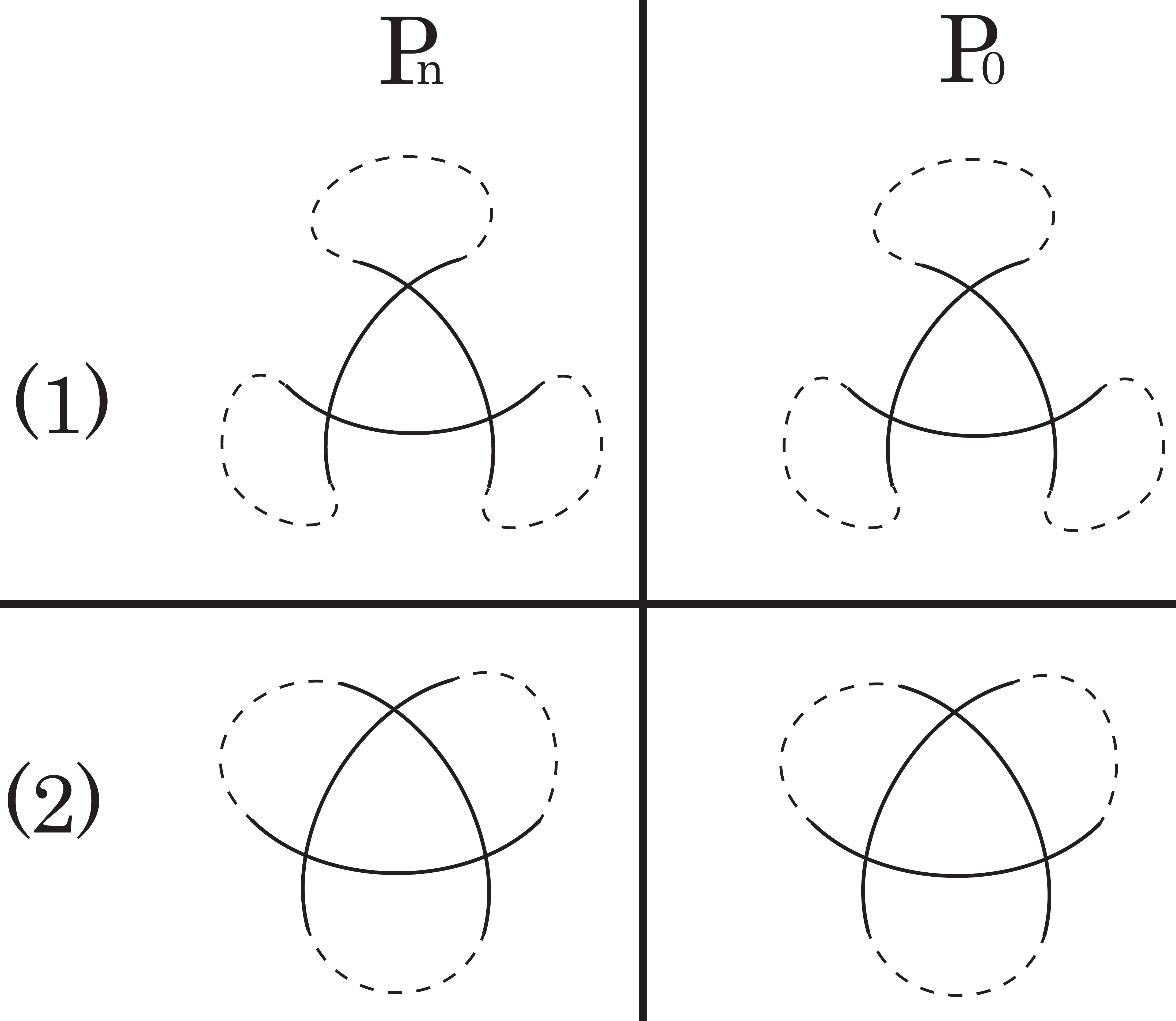}
\end{center}
\caption{Case $m$ $=$ $3$.}\label{43}
\end{figure}
We show this case observing Fig.~\ref{43}.  If $P_n$ is presented as the left column, then by the assumption of induction, $P_0$ is presented as the right column.  However, by the assumption of $P_0$, $P_0$ does not have coherent $3$-gons, which implies a contradiction.  Therefore, there is no possibility of $m$ $=$ $3$.  
\end{itemize}
Then, $x$-disk does not contain double points labeled as $d(P_0)$.  For a set $A$, we denote $S^{2} \setminus A$ by $A^{c}$.  Further, we retake sufficiently small $B$ or $x$-disk if necessary.  Accordingly, the condition implies that 
\begin{equation}\label{r-condition1}
x{\text{-disk}} \subset \bigcup (r{\text{-disk}}) \cup B^{c}.  
\end{equation}  

We also note that ``$x$-disk does not contain two double points such that one belongs to a $r$-disk and another belongs to the other $r$-disk in $P_n$ ($\star$)" (Fig.~\ref{44}).  
\begin{figure}[htbp]
\includegraphics[width=10cm]{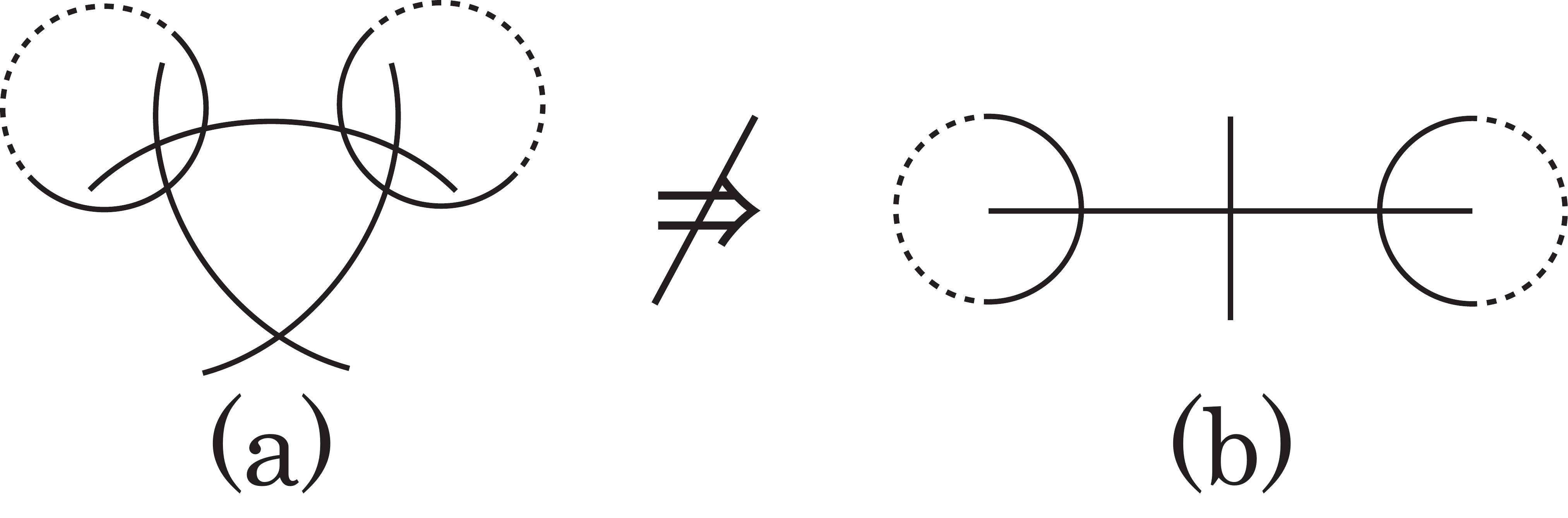}
\caption{The $x$-disk cannot be related to two $r$-disk.}\label{44}
\end{figure}
First, $(1a)$ or $(1b)$ cannot be related to two different double points.  Second, we consider coherent $3$-gons appearing in $(3a)$ and $(3b)$.  From Fig.~\ref{44}, if $x$-disk contains two double points such that one belongs to one $r$-disk and another belongs to the other $r$-disk, we have the case as shown in Fig.~\ref{44} (a) in $P_n$.  However, by the assumption of induction, two different $r$-disks have exactly one double point between them (Fig.~\ref{44} (b), see also Fig.~\ref{45}).  Therefore, Fig.~\ref{44} (a) cannot correspond to Fig.~\ref{44} (b), and therefore, we have ($\star$).  

Using ($\star$) and (\ref{r-condition1}), there exists $r$-disk $r_0$ satisfying the following formula (\ref{r-condition2}) that is the refined version of (\ref{r-condition1}).  
\begin{equation}\label{r-condition2}
x{\text{-disk}} \subset r_0 \cup B^{c}.  
\end{equation}
By the assumption of induction, there is no arc in $(\bigcup (r{\text{-disks}}) \cup B)^{c}$ (e.g. Fig.~\ref{45}).  This implies that $({r_0}^{c} \cap x{\text{-disk}}~x_d)$ does not contain any arc of $P_n$.  Then, we remove $({r_0}^{c} \cap x_d)$ from $x$-disk $x_d$.  After removing it, it is still possible to locally apply $x$ to $P_n$ in $x_d \setminus ({r_0}^{c} \cap x_d)$.  Then, we retake $x$-disk $\tilde{x_d} \subset r_0$ such that $x_d$ is the neighborhood of $(1a)$, $(1b)$, $(3a)$, or $(3b)$.  
Then, we have
\begin{equation}\label{r-condition3}
x{\text{-disk}} \subset r{\text{-disk}}~r_0.  
\end{equation}
Formula (\ref{r-condition3}) completes the proof.  
\end{proof}

Finally, we prove Theorem \ref{otherClass_thm}.  
\begin{proof}
If a knot projection $P$ is the connected sum of $P_0$, the trivial knot projection \begin{picture}(10,15) \put(5,3){\circle{8}} \end{picture}, the knot projection that appears similar to $\infty$, and the trefoil knot projection, we can easily find a path consisting of $(1a)$, $(1b)$, $(3a)$, and $(3b)$ from $P_0$ to $P$.  The converse is implied by Lemma \ref{generalLemma} and Theorem \ref{strongTrivial_thm}.  
\end{proof}

Fig.~\ref{hyou} shows a table of the reduced prime knot projections up to $7$ double points (the notion of reduced knot projections is defined in \cite[Page 2]{IS}) with their trivializing numbers ``$\operatorname{tr}$" (cf. Theorem \ref{weak13trivial_thm}) and cross chord numbers (cf. Theorem \ref{strong13invariant}) expressed by integers on the faces made by the knot projections shown in the figure.  In this table, every symbol $n_m$ (e.g. $3_1$) denotes the knot projection of the prime knot $n_m$.  Symbols $7_A$, $7_B$, and $7_C$ are knot projections that have seven double points.  Every element of $\{7_m (1 \le n \le 7), 7_A, 7_B, 7_C\}$ is different from the other elements up to isotopy on $S^{2}$.  In this figure, we connect two knot projections by a line if two knot projections are related by finite first Reidemeister moves and one third Reidemeister move.  We would like to remark that we can show that $7_4$ and $7_B$ are equivalent under strong (1, 3) homotopy via a prime knot projection with $8$ double points.

\begin{figure}
\includegraphics[width=12cm]{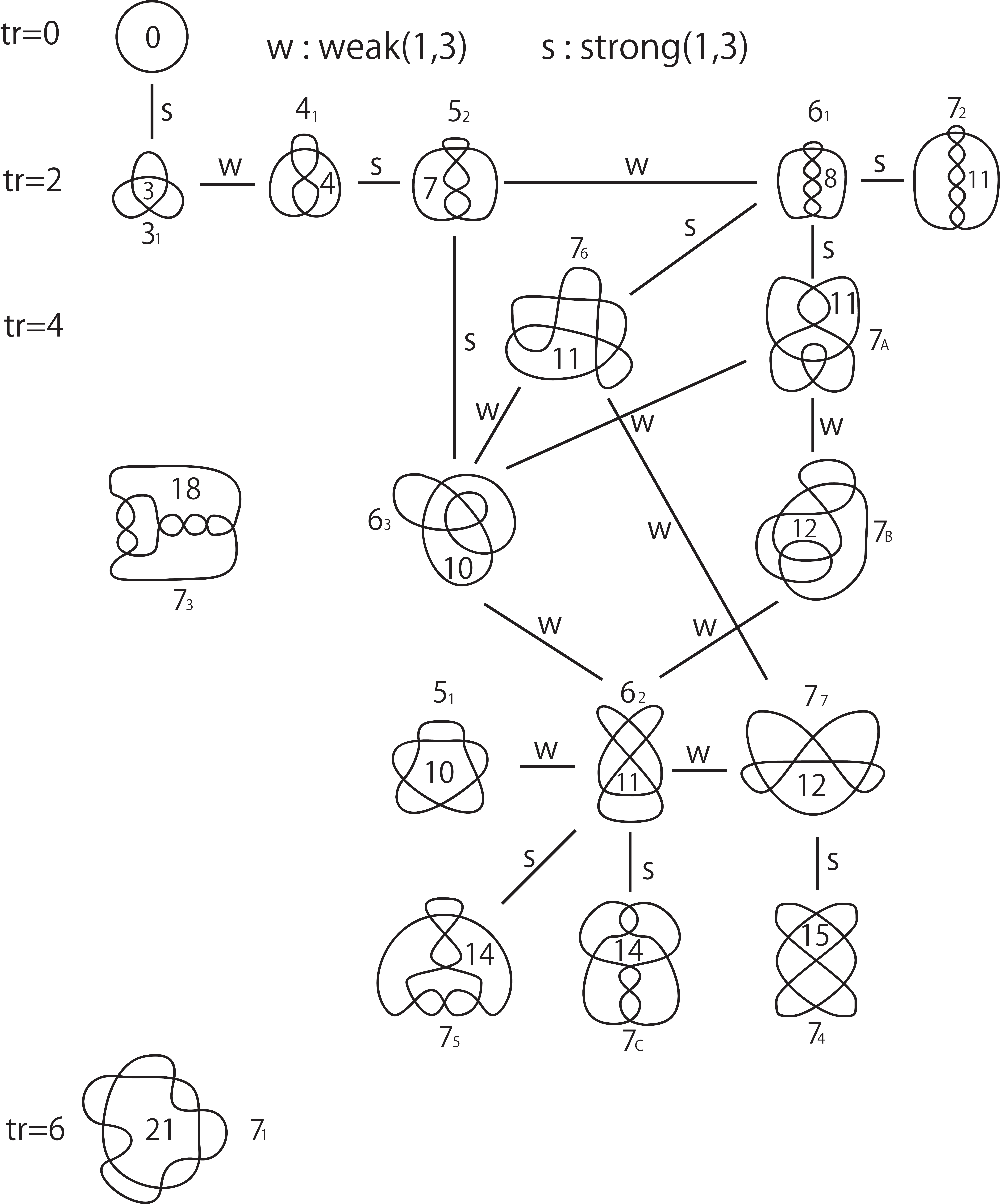}
\caption{Table of reduced prime knot projections up to $7$ double points with trivializing numbers and cross chord numbers.  }\label{hyou}
\end{figure}

{\textbf{Acknowledgments.}}  N. Ito would like to express his sincere gratitude to Professors Kenneth C. Millett and Masahico Saito for their insightful comments.  This work was partly supported by a Waseda University Grant for Special Research Projects (Project number: 2013A-6497).

\vspace{3mm}
Waseda Institute for Advanced Study, 1-6-1, Nishi-Waseda, Shinjuku-ku Tokyo 169-8050, Japan\\
{\it{E-mail address}}: {\texttt{noboru@moegi.waseda.jp}}

\vspace{3mm}
Department of Mathematics, Graduate School of Education, Waseda University, 1-6-1 Nishi-Waseda, Shinjuku-ku, Tokyo, 169-8050, Japan\\
{\it{E-mail address}}: {\texttt{max-drive@moegi.waseda.jp}}

Current address: Gakushuin Boy's Junior High School, 1-5-1 Mejiro, Toshima-ku, Tokyo, 171-0031, Japan\\
{\it{E-mail address}}: {\texttt{Yusuke.Takimura@gakushuin.ac.jp}}

\vspace{3mm}
Department of Mathematics, School of Education, Waseda University, 1-6-1 Nishi-Waseda, Shinjuku-ku, Tokyo, 169-8050, Japan\\
{\it{E-mail address}}: {\texttt{taniyama@waseda.jp}}

\end{document}